\newtheorem{theorem}{Theorem}[section] 
\newtheorem{lemma}[theorem]{Lemma}
\newtheorem{corollary}[theorem]{Corollary}
\newtheorem{proposition}[theorem]{Proposition}
\theoremstyle{definition}
\newtheorem{example}[theorem]{Example}
\newtheorem{remark}[theorem]{Remark}
\newcommand{\de}{\partial}
\newcommand{\db}{\overline{\partial}}
\newcommand{\ddb}{\partial\overline{\partial}}
\newcommand{\Rb}{\mathbb{R}}
\newcommand{\Cb}{\mathbb{C}}
\newcommand{\Zb}{\mathbb{Z}}
\newcommand{\cL}{\mathcal{L}}
\newcommand{\cO}{\mathcal{O}}
\newcommand{\CP}{\Cb P}
\newcommand{\Ric}{\operatorname{Ric}}
\newcommand{\vol}{\operatorname{vol}}
\numberwithin{equation}{section}
\title{Steady Kähler-Ricci solitons on crepant resolutions of finite quotients of $\Cb^n$}
\author{Olivier Biquard and Heather Macbeth}
\date{}
\begin{document}

\maketitle

\begin{abstract} 
  We  prove the existence of steady Kähler-Ricci solitons on equivariant crepant resolutions of $\Cb^n/G$, where $G$ is a finite subgroup of $SU(n)$.
\end{abstract}

\renewcommand{\thefootnote}{}
\footnotetext{This material is based upon work supported by the National Science Foundation under Grant No.\ DMS-1440140 while the authors were in residence at the Mathematical Sciences Research Institute in Berkeley, California, during the Spring 2016 semester.}

\section{Introduction}

\subsection{Overview}

The `fixed points' of the Ricci flow are \emph{steady Ricci solitons}.  Such objects, natural generalizations of Ricci-flat metrics, are pairs $(g,X)$ of a Riemannian metric and a vector field, satisfying the elliptic partial differential equation
\[
\Ric(g)+\tfrac{1}{2}\cL_Xg=0.
\]
They arise in the study of the Ricci flow as  models of singularities \cite{DP07,GZ08,DS10,AIK15}, and as backward limits of ancient solutions.  They are also  critical points, in a suitable sense \cite{Has11}, of the Perelman $\mathcal{F}$-functional, and thus can be considered canonical among all such pairs $(g,X)$.

Steady Ricci solitons which are not Ricci-flat must be noncompact \cite{Ive93}.  The few known examples include several which are Kähler, with holomorphic vector field:  Hamilton's \emph{cigar soliton} \cite{Ham88} on $\Cb$, H.-D.\ Cao's generalizations \cite{Cao94} on $\Cb^n$ and $K_{\CP^{n-1}}$, and further generalizations by Dancer-M.\ Wang \cite{DW11} and B.\ Yang \cite{Yan12}.  Non-Kähler examples include the well-known constructions of Bryant and of Ivey.  All these examples -- and, we believe, all known examples -- are highly symmetric, and the soliton metric is given either explicitly or by solving an ODE.

In this article we use PDE methods to construct new steady Kähler-Ricci solitons $(M,\omega,X)$,  in all complex dimensions $n\geq 2$ (real dimensions $2n\geq 4$),  of infinitely many topological types in dimensions 2 and 3 at least.    Like all steady Kähler-Ricci solitons, they have first Chern class
\[
c_1(M)=[\Ric(\omega)]=[-\tfrac{1}{2}\cL_X\omega]=0.
\]
Our construction proceeds by taking Joyce's well-known family \cite{Joy00} of Ricci-flat Kähler metrics  on \emph{crepant resolutions} of orbifolds $\Cb^n/G$ (which automatically have $c_1(M)=0$), and modifying their metrics near infinity by \emph{gluing} them to a $G$-quotient of Cao's soliton on $\Cb^n$.   A precise statement is as follows:

\begin{theorem}\label{main}
  Let $G$ be a finite subgroup of $SU(n)$, which acts freely on $\Cb^n\setminus\{0\}$.  Let $M_G$ be a crepant resolution of $\Cb^n/G$, which is equivariant with respect to the action of $\Cb^*$.   Let $\mathfrak{k}$ be  a Kähler class on $M_G$ which contains an asymptotically locally Euclidean Kähler metric.
  
  Then for all $\epsilon$ sufficiently small, there exists a steady Kähler-Ricci soliton on  $M_G$ in the cohomology class $\epsilon\mathfrak{k}$, whose drift vector field is the extension to $M_G$ of the radial vector field $-2 r\partial/\partial r$ (this extension exists by the equivariance of the resolution).

  Moreover, outside a compact set these solitons have the form $\omega_0+\tfrac{i}{2}\de\db\Psi$, where $\omega_0$ is Cao's steady Kähler-Ricci soliton metric on $\Cb^n$ (descending to $\Cb^n/G$), and, for any $k$ and any $\lambda<n$,
  \[
|\nabla^k\Psi|_{\omega_0}  =O(e^{-\lambda t}),
\]
where $t$ is the distance to a fixed point in $M_G$.
\end{theorem}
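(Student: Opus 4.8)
The plan is a gluing (singular perturbation) construction: assemble an approximate soliton on $M_G$ out of a rescaled Joyce metric near the exceptional set and Cao's soliton near infinity, then correct it to an exact solution of the drift-perturbed complex Monge--Ampère equation by the implicit function theorem in $\epsilon$-adapted weighted spaces. For the approximate soliton: normalize Cao's metric $\omega_0$ on $\Cb^n$ so that it agrees to second order with $\omega_{\mathrm{flat}}$ at the origin, and push it down to $\Cb^n/G$ (possible since $\omega_0$ is $U(n)$-invariant and $G\subset SU(n)$). Let $\omega_J^\epsilon$ be Joyce's ALE Ricci-flat Kähler metric on $M_G$ in the class $\epsilon\mathfrak{k}$; for small $\epsilon$ it concentrates in a ``bubble'' of size $\sim\sqrt\epsilon$ around the exceptional set and is asymptotic to the flat cone $\Cb^n/G$. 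On an intermediate annulus $\{\epsilon^{1/2-\delta}\lesssim|z|\lesssim 1\}$ both $\omega_J^\epsilon$ and $\omega_0$ are $C^\infty$-close to $\omega_{\mathrm{flat}}$, so one interpolates their Kähler potentials with a cutoff to obtain a global Kähler metric $\omega_\epsilon\in\epsilon\mathfrak{k}$ equal to $\omega_J^\epsilon$ near the exceptional set and to $\omega_0$ outside a fixed compact set. By $\Cb^*$-equivariance, $X=-2r\,\partial/\partial r$ extends to a holomorphic vector field on $M_G$ vanishing on the exceptional set, and $\Ric(\omega_\epsilon)+\tfrac12\cL_X\omega_\epsilon=-\tfrac i2\ddb E_\epsilon$ for a function $E_\epsilon$ supported in $(\text{bubble})\cup(\text{annulus})$. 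Since $\omega_0$ is an exact soliton and $\omega_J^\epsilon$ is Ricci-flat with $\cL_X\omega_J^\epsilon$ computable from the unit Joyce geometry by rescaling, one verifies $\|E_\epsilon\|_{C^k}=O(\epsilon^{\gamma})$ for some $\gamma>0$, in norms measured with $\omega_\epsilon$.

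For $\omega=\omega_\epsilon+\tfrac i2\ddb\Psi$ the soliton equation is equivalent, after applying the $\ddb$-lemma and absorbing the resulting constant, to
\[
\log\frac{(\omega_\epsilon+\tfrac i2\ddb\Psi)^n}{\omega_\epsilon^n}=c\,X\Psi+c'E_\epsilon,
\]
a complex Monge--Ampère equation perturbed by the first-order drift term $X\Psi$. Its linearization at $\Psi=0$ is, up to a positive constant, the Bakry--Émery (drift) Laplacian $\Delta_f=\Delta_{\omega_0}-\langle\nabla f,\nabla\cdot\rangle$ of Cao's soliton near infinity, with $\nabla f$ proportional to $X$, and globally a uniformly elliptic operator on $(M_G,\omega_\epsilon)$ degenerating only over the $\sqrt\epsilon$-bubble in the standard way; by the maximum principle $\Delta_f$ has no kernel among functions tending to $0$ at infinity.

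The analytic heart of the argument is then the linear theory. Introduce $\epsilon$-weighted Hölder (or $L^2$-Sobolev) spaces on $M_G$ carrying the distance weight $e^{\lambda t}$ at infinity and bubble-adapted weights near the exceptional set, and prove that the linearized operator above is an isomorphism between such spaces \emph{uniformly in $\epsilon$}, for weights $\lambda$ below the first critical indicial weight of $\Delta_f$ on the end of Cao's soliton. That end is of ``cylindrical--paraboloid'' type: volume growth $\sim t^{n}$, with cross-sections Berger spheres having bounded Hopf fibre and a $\CP^{n-1}$-factor of diameter $\sim\sqrt t$. Separating variables and comparing, mode by mode, the decaying solutions of $\Delta_f u=0$ identifies the critical weight as $n$. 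Uniformity as $\epsilon\to 0$ comes from the usual neck-matching: splice the global estimate for $\Delta_f$ on $(M_G\setminus\text{bubble},\omega_0)$ with the scale-invariant estimate for the Laplacian on the unit Joyce space, patched over the annulus.

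Finally, write the equation as $\mathcal F_\epsilon(\Psi)=0$ with $\mathcal F_\epsilon(0)=O(\epsilon^\gamma)$, $D\mathcal F_\epsilon(0)$ the uniform isomorphism just described, and Monge--Ampère nonlinearity controlled by the weighted spaces (which form multiplication algebras); the implicit function theorem produces, for all small $\epsilon$, a solution $\Psi_\epsilon$ with $\|\Psi_\epsilon\|\to 0$, so that $\omega_\epsilon+\tfrac i2\ddb\Psi_\epsilon$ is the asserted steady Kähler--Ricci soliton in $\epsilon\mathfrak{k}$ with drift $X$. Outside a compact set $\omega_\epsilon=\omega_0$ and $E_\epsilon=0$, so there $\Psi_\epsilon$ solves the homogeneous soliton equation; by elliptic regularity and the mapping properties of $\Delta_f$ it lies in the weighted space with weight $e^{\lambda t}$ for every $\lambda<n$, which gives $|\nabla^k\Psi_\epsilon|_{\omega_0}=O(e^{-\lambda t})$ for all $k$ and all $\lambda<n$. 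I expect the main obstacle to be precisely this linear analysis: establishing Fredholm theory and the sharp weighted isomorphism for the Bakry--Émery Laplacian on the non-ALE asymptotic geometry of Cao's soliton -- in particular locating the critical weight at exactly $n$ -- and making the estimates uniform as the Joyce bubble collapses. The gluing construction, the Monge--Ampère nonlinearity, and the fixed-point step should be routine once the weighted isomorphism is available.
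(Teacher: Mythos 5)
Your overall strategy is the one the paper follows: glue a rescaled Joyce ALE Ricci--flat metric near the exceptional set to Cao's soliton near infinity, write the soliton condition as a drift-perturbed Monge--Amp\`ere equation, prove a uniform weighted isomorphism for the drift Laplacian $\Delta-X$, and close with a quantitative inverse function theorem. Your identification of the critical exponential rate at infinity as $n$ is also correct (the paper gets it from the explicit barrier $e^{-\delta\varphi}$ with $\varphi\sim nt$, $0<\delta<1$, rather than by separating variables, and proves uniformity of the linear estimate by a three-case blowup argument rather than neck-splicing -- but these are interchangeable routes to the same estimates).

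The genuine gap is in the nonlinear closing step, and it is concentrated exactly where you wave your hands: the claim that $\|E_\epsilon\|=O(\epsilon^{\gamma})$ ``for some $\gamma>0$'' suffices. It does not. In the $\epsilon$-adapted weighted spaces the quadratic remainder of the Monge--Amp\`ere operator is \emph{not} uniformly controlled: because the weight and the injectivity radius degenerate on the bubble, the Lipschitz constant of the quadratic part blows up like a negative power of $\epsilon$ (in the paper, $Q_\epsilon$ is Lipschitz with constant $\sim\epsilon^{-2-\gamma}$ on a ball of radius $\sim\epsilon^{2+\gamma}$). The inverse function theorem therefore requires the error term to beat a \emph{specific} positive power of $\epsilon$, not merely to be $o(1)$. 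This is what forces the precise choice of gluing radius $r_\epsilon=\epsilon_{b}^{\,n/(n+1)}$ (with $\epsilon_b$ the bubble scale), which equalizes the two error sources -- the deviation $O(r^4)$ of the Cao potential from $r^2$ and the deviation $O(\epsilon_b^{2n}r^{2-2n})$ of the Joyce potential from $r^2$ -- and, together with the restriction $\gamma\in(0,2n-2)$ on the polynomial weight at the cone point (needed anyway to avoid the indicial roots $0$ and $2n-2$ of the flat cone $\Cb^n/G$, which governs the intermediate blowup regime you do not discuss), makes the error $O(r_\epsilon^{4+\gamma})=O(\epsilon_b^{(4+\gamma)n/(n+1)})$ fall below the threshold $\epsilon_b^{2+\gamma}$ precisely when $\gamma<2n-2$. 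Your gluing scale ($|z|\sim\epsilon_b^{1-2\delta}$, essentially at the bubble, with interpolation out to unit scale) does not balance these errors; indeed the paper remarks that gluing at a larger radius such as $\sqrt{\epsilon_b}$ produces an error too big for the direct argument and would require first correcting the Joyce metric to higher order (as in Biquard--Minerbe) before gluing. So as written, your fixed-point step would fail; you must either choose $r_\epsilon$ to balance the errors or add a correction stage to the approximate solution.
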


Such gluing constructions have been performed before in other geometric settings. Early uses of the method include Taubes' construction \cite{Tau92} of anti-self-dual metrics, and Kapouleas' \cite{Kap90,Kap91} of  minimal surfaces.  More recently the method has been used \cite{Ngu09,Ngu13} to construct self-translators, the mean curvature flow analogue of steady Ricci solitons.  Joyce himself \cite{Joy00} used his Calabi-Yau's as building blocks for gluing constructions of manifolds of exceptional holonomy.

In the Kähler setting, Biquard-Minerbe \cite{BM11} used gluing methods to construct noncompact Calabi-Yau manifolds of complex dimension 2, of several different asymptotic behaviours.  Gluing techniques have also been used by various authors to construct constant scalar curvature and extremal Kähler metrics, see for example \cite{RS05,ArePac06,ArePacSin11,Sze12,BiqRol15}.

We also prove a general uniqueness statement.  We show that a steady K\"ahler-Ricci soliton is unique for its cohomology class, asymptotics (as previously mentioned, nontrivial steady solitons are noncompact), and drift vector field, in the following sense:
\begin{proposition}\label{prop:uniqueness}
  Let $(M,\omega,X)$ be a steady Kähler-Ricci soliton. Let $\omega+\frac i2\ddb\Psi$ be another steady Kähler-Ricci soliton for the same drift vector field $X$, such that at infinity we have
  \begin{enumerate}
  \item $\Psi\rightarrow0$;
  \item $X\Psi\rightarrow0$;
  \item $\ddb\Psi\rightarrow0$ (by comparison with $\omega$).
  \end{enumerate}
Then $\Psi=0$.
\end{proposition}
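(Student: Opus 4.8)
The plan is to derive a scalar elliptic equation for $\Psi$ from the soliton equation and apply a maximum principle at infinity. Recall that a Kähler metric $\omega$ with holomorphic drift vector field $X$ is a steady Kähler-Ricci soliton precisely when $\Ric(\omega) = \frac12 \cL_X \omega = \frac{i}{2}\ddb f$ for the "soliton potential" $f$ (with $X = \nabla^{1,0} f$ essentially), equivalently $\Ric(\omega) - \frac{i}{2}\ddb f = 0$. First I would write down both soliton equations — for $\omega$ with potential $f$, and for $\omega_\Psi := \omega + \frac{i}{2}\ddb\Psi$ with the \emph{same} $X$, hence with potential $f_\Psi$ differing from $f$ by the change in Kähler potential along $X$; concretely the Ricci forms are related by $\Ric(\omega_\Psi) = \Ric(\omega) - \frac{i}{2}\ddb \log\frac{\omega_\Psi^n}{\omega^n}$. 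Subtracting the two soliton equations gives $\ddb\bigl(\log\frac{\omega_\Psi^n}{\omega^n} + X\Psi - c\bigr) = 0$ for a constant $c$, since the change in soliton potential under $\omega \mapsto \omega + \frac{i}{2}\ddb\Psi$ at fixed $X$ is exactly $X\Psi$ up to a constant.

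Next, because $\log\frac{\omega_\Psi^n}{\omega^n} + X\Psi$ is $\ddb$-closed (its $\ddb$ vanishes), it is pluriharmonic; using hypotheses (2) $X\Psi \to 0$ and (3) $\ddb\Psi \to 0$ — which forces $\log\frac{\omega_\Psi^n}{\omega^n} \to 0$ — a pluriharmonic function tending to $0$ at infinity must be identically $0$ (this is where I would invoke a Liouville-type statement, or simply apply the maximum principle to its real part on the complement of a large compact set, noting $M$ is connected and noncompact). Therefore we get the exact identity
\[
\log\frac{\omega_\Psi^n}{\omega^n} = -X\Psi
\]
everywhere on $M$. This is now a fully nonlinear second-order elliptic PDE for $\Psi$ of complex Monge–Ampère type, $\omega_\Psi^n = e^{-X\Psi}\,\omega^n$, with a first-order drift term.

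Finally I would run the maximum principle on this equation. At a point $p$ where $\Psi$ attains a positive maximum, $\ddb\Psi \le 0$ so $\omega_\Psi \le \omega$ hence $\omega_\Psi^n \le \omega^n$, giving $e^{-X\Psi}\le 1$, i.e. $X\Psi \ge 0$ at $p$; but at a maximum $d\Psi = 0$, so $X\Psi = 0$, forcing $\omega_\Psi^n = \omega^n$ at $p$, and combined with $\omega_\Psi \le \omega$ this forces $\omega_\Psi = \omega$, i.e. $\ddb\Psi = 0$ at $p$. To turn this pointwise rigidity into $\Psi \equiv \mathrm{const}$ I would work on the (nonempty, by hypothesis (1), since $\Psi\to 0$ at infinity and if $\Psi\not\equiv 0$ it has an interior positive max or negative min) superlevel sets and use the strong maximum principle for the linearized operator, or alternatively rephrase the equation as $\Delta_{\omega}\Psi + (\text{l.o.t.}) = 0$ after noting the concavity of $\log\det$: writing the equation as $\int_0^1 \frac{d}{ds}\log\frac{\omega_{s\Psi}^n}{\omega^n}\,ds = -X\Psi$ yields $L\Psi := \mathrm{tr}_{\omega'}(\tfrac{i}{2}\ddb\Psi) + X\Psi = 0$ for some Kähler metric $\omega'$ between $\omega$ and $\omega_\Psi$, a linear elliptic operator with no zeroth-order term, to which the strong maximum principle applies directly: $\Psi$ cannot attain an interior max unless constant, and by hypothesis (1) that constant is $0$. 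Running the same argument on $-\Psi$ (the equation is not symmetric, but the same maximum-principle structure applies at a minimum) completes the proof.

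The main obstacle I anticipate is the step extracting the exact identity $\log\frac{\omega_\Psi^n}{\omega^n} = -X\Psi$ from $\ddb$-closedness: one must justify that a pluriharmonic (or even just harmonic, in the Riemannian sense, once one knows enough regularity) function on the noncompact manifold $M$ which decays at infinity vanishes identically, and here the precise meaning of "at infinity" and the geometry of the ends (only controlled via the hypotheses, not assumed ALE a priori in this Proposition) matters. A clean way around it is to avoid Liouville entirely: apply the maximum principle directly to $u := \log\frac{\omega_\Psi^n}{\omega^n} + X\Psi$, which satisfies $\Delta_{\omega} u = 0$ (being pluriharmonic) and $u \to 0$, to conclude $u \equiv 0$; this needs only the connectedness and the stated decay, plus elliptic regularity to make sense of $\Delta u = 0$ classically.
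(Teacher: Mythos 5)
Your proposal is correct and follows essentially the same route as the paper: both extract the pluriharmonic (hence harmonic) quantity $\log\frac{(\omega+\frac i2\ddb\Psi)^n}{\omega^n}-\frac14 X\Psi$, kill it via the strong maximum principle using the decay hypotheses, and then apply a maximum principle to the resulting Monge--Amp\`ere identity to force $\Psi=0$. Two minor remarks: your sign on the drift term is off (the computation $\frac12\cL_X(\frac i2\ddb\Psi)=\frac i4\ddb(X\Psi)$ puts $-\frac14 X\Psi$ next to $+\log$, not $+X\Psi$, though this is harmless since the drift operator has no zeroth-order term either way), and your finish via the path-linearized operator $\mathrm{tr}_{\omega'}(\frac i2\ddb\Psi)+(\text{drift})$ is a clean, equally standard variant of the paper's finish, which instead uses concavity of $\log\det$ to get the differential inequality $0\leq\frac14(\Delta_\omega-X)\Psi$ and then swaps the roles of the two metrics.
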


\subsection{Future directions}

Our result suggests numerous directions for future work.    First, the orbifolds $\Cb^n/G$ are only the simplest examples of Calabi-Yau cones.  Equivariant crepant resolutions of arbitrary Calabi-Yau cones are therefore good candidates for further manifolds admitting steady Kähler-Ricci solitons.  In generalizing the gluing procedure to these manifolds, the Calabi-Yau metrics of \cite{vC10,Got12} would replace the Joyce metrics used in this article as the gluing model near the singularity.  However, it is unclear what could replace the Cao soliton metric on $\Cb^n/G$ as the gluing model away from the singularity.

Furthermore, we conjecture the existence of a steady Kähler-Ricci soliton on the manifolds $M_G$, associated to the radial vector field $X$, in every Kähler class, not just small Kähler classes.  Producing such  Kähler-Ricci solitons amounts to solving a modified complex Monge-Ampère equation on the underlying complex manifold. 

In the compact case both continuity methods and variational methods have been used \cite{Zhu00,BerNys} for such modified complex Monge-Ampère equations, for example, the construction of compact shrinking K\"ahler-Ricci solitons.  The trick used by these authors is as follows:  the additional term of the Monge-Ampère equation in the soliton case is a derivative of the potential along the drift vector field; this is actually a moment map for a circle action; the image of the moment map does not depend on the particular Kähler (or symplectic) form, and so, since the manifolds are compact, it is a priori bounded.  In our noncompact setting the image of the moment map is unbounded so there is no a priori control.

In other recent work \cite{Sie13, CD16}, (noncompact) expanding Kähler-Ricci solitons have been constructed, solving the modified complex Monge-Ampère equation using a maximum principle which is not avalaible in the steady case.

New ideas  therefore seem necessary to solve the modified Monge-Ampère equation associated to steady K\"ahler-Ricci solitons.

\subsection{Outline}
In Sections \ref{radial-potential}--\ref{sec:analys-finite-quot}, we first review the construction of the Cao soliton, and then introduce the required analytical tools to handle the analysis in weighted Hölder spaces on the Cao soliton, with an exponential weight at infinity. Finally, for finite quotients of the Cao soliton, we combine this analysis with the weighted analysis at the singular point of the quotient.

In Section \ref{joyce-spaces} we review the material needed on crepant resolutions of $\Cb^n/G$, and on the canonical Kähler, Ricci-flat, asymptotically locally Euclidean metrics on these spaces.  We also discuss some specific examples of equivariant crepant resolutions.

The  proof of uniqueness (Proposition \ref{prop:uniqueness}) is given in Section  \ref{uniqueness-solitons}.

Finally, in Sections \ref{sec:appr-solut-monge}--\ref{sec:inverse-funct-theor}, we proceed to the gluing construction, using in particular a blowup analysis (Section \ref{first-order-sec}) to bound the inverse of the linearization of the problem in suitable weighted spaces.  Theorem \ref{main} is proved in Section \ref{sec:inverse-funct-theor}.

\section{$U(n)$-invariant Kähler metrics and the soliton equation} \label{radial-potential}

We begin by recalling Cao's construction \cite{Cao94} of a steady soliton on $\Cb^n$ and a one-parameter family of steady solitons on the bundle $\cO(-n)$ over $\CP^{n-1}$.  We habitually identify the complement of the zero section of $\cO(-n)$ with $(\Cb^n\setminus\{0\})/\Zb_n$, to which it is biholomorphic.

All Cao's solitons are invariant under the  natural action of $U(n)$ on finite quotients of $\Cb^n$:  therefore we start by reviewing this ansatz, see \cite{FIK03}.

>From Section \ref{cao-spaces} onwards, only the soliton on $\Cb^n$ will be used.  The family of solitons on $\cO(-n)$ is reviewed because it is the motivating example for the solitons on desingularizations of  $(\Cb^n\setminus\{0\})/G$ constructed in the rest of this article (see Remark \ref{cao-family-deformations}).

Let $\Phi:\Rb\to\Rb$, and denote its derivative by $\varphi(t)=\Phi_t(t)$.  Throughout this article we use interchangeably the variables $z\in\Cb^n\setminus\{0\}$, $r\in\Rb^+$, $t\in\Rb$, with
\[
|z|^2=r^2=e^t.
\]
We introduce real symmetric 2-tensors on $\Cb^n\setminus\{0\}$:
\begin{align*}
g_{FS}&=g_{S^{2n-1}}-\eta^2,\\
g_{cyl}&=\frac{dr^2}{r^2} +\eta^2 = \frac{1}{4}dt^2+\eta^2
\end{align*}
where $g_{S^{2n-1}}$ is the round sphere metric, the 1-form $\eta$ is the connection 1-form on the Hopf bundle $S^{2n-1}\rightarrow\CP^{n-1}$, and $g_{FS}$ is the pull-back of the standard Fubini-Study metric of $\CP^{n-1}$ with holomorphic sectional curvature $-4$. We denote by $\omega_{FS}$ and $\omega_{cyl}$ the associated (1,1)-forms.

The Cao $U(n)$-invariant Kähler forms on $\Cb^n\setminus\{0\}$ will be of the form
\begin{equation}
\omega=\tfrac{i}{2}\de\db\Phi(t) =\varphi(t)\omega_{FS}+\varphi_t(t)\omega_{cyl}.\label{eq:5}
\end{equation}
This formula defines a Kähler metric outside the origin if and only if both $\varphi$ and $\varphi_t$ are everywhere positive.  We henceforth assume that they are. The Ricci form is then
\begin{equation}
 \rho(\omega) = i\ddb \log \frac{\omega^n}{\vol_{\Cb^n}} = i\ddb \log \big(\varphi^{n-1}\varphi_te^{-nt}\big) = i \ddb \big(\log (\varphi^{n-1}\varphi_t) - nt\big).\label{eq:1}
\end{equation}

Fix $\mu\in \Rb$. Let $X$ be the radial vector field $-2\mu r\partial/\partial r=-4\mu\partial/\partial t$.  The pair $(\omega,X)$ is a steady Ricci soliton if
\begin{equation*}\label{soliton-general}
\Ric(g)+\tfrac{1}{2}\cL_Xg=0,
\end{equation*}
which by (\ref{eq:1}) can be rewritten
\[ \varphi^{n-1}\varphi_te^{\mu\varphi}=cst. \, e^{nt}. \]
By a translation on $t$ (that is, an homothety of $\Cb^n$), one can take the constant to be 1, and the final equation is therefore
\begin{equation}\label{soliton-eq}
\varphi^{n-1}\varphi_te^{\mu\varphi}= e^{nt}.
\end{equation}

Such Ricci solitons are \emph{gradient} Ricci solitons:  with the gradient function $f(z)=-\mu\varphi(t)$,
\[ 
\Ric(g_\Phi)+(\nabla^{g_\Phi})^2f=0.
\]
By multiplying $\varphi$ by  a constant, one can restrict to the case $\mu=1$, which we will henceforth do.

The equation (\ref{soliton-eq}) is solved in the following way.
Let $F(s)$ be the degree-$(n-1)$ polynomial $F(s)=\sum_{r=0}^{n-1}(-1)^{n-r-1}\frac{(n-1)!}{r!}s^r$. Then one can check that
\begin{equation*}
  \label{eq:2}
  \frac{d}{ds}\left[F(s)e^s\right]=s^{n-1}e^s, 
\end{equation*}
and for all $s> s_0\geq 0$,
\begin{equation}
0<F(s)e^s-F(s_0)e^{s_0}<s^{n-1}e^s.\label{eq:4}
\end{equation}
It follows that the soliton equation (\ref{soliton-eq}) can be rewritten as
\begin{equation*}
  \label{eq:3}
  F(\varphi)e^\varphi = \frac{e^{nt}}n + cst.
\end{equation*}
We can therefore define a family of solitons parametrized by a nonnegative real number $a\geq0$ by taking $\varphi_a=\varphi$ to be implicitly defined by the equation
\begin{equation}\label{defining-phi}
F(\varphi(t))e^{\varphi(t)}=\frac{e^{nt}}n+F(a)e^{a}.
\end{equation}
>From (\ref{eq:4}) the function $\varphi$ is well defined, with $\varphi(t)>a$ for all $t$; again from (\ref{eq:4}), one has $\varphi^{n-1}e^\varphi>\frac{e^{nt}}n$ so equation (\ref{soliton-eq}) implies that
\begin{equation}
  \label{eq:6}
  0 < \varphi_t < n.
\end{equation}
Thus $\Phi:=\int\varphi dt$ is the Kähler potential of a steady Ricci soliton with associated holomorphic vector field $X=-2 r\partial/\partial r=-4\partial/\partial t$.

\begin{example}[Hamilton's cigar soliton]
In dimension $n=1$, the equation (\ref{defining-phi}) reduces to $e^{\varphi_a(t)}= e^t+e^{a}$, and then $(\varphi_a)_t=(1+e^{a}e^{-t})^{-1}$ by (\ref{soliton-eq}). The associated Kähler-Ricci soliton on $\Cb$ is $(\varphi_a)_tg_{cyl}$, so the dependence in $a$ is just by translation, and we get a unique soliton (Hamilton's cigar soliton),
\[ (\varphi_0)_tg_{cyl}=\frac{dx^2+dy^2}{r^2+1}.  \]
\end{example}

\section{Asymptotics of the Cao potentials} \label{asymptotics}

In this section, we establish the asymptotics of the functions $\varphi_a(t)$ constructed in the previous section as $t\to-\infty$ and as $t\to\infty$.  The former is necessary to determine the topological structure of the metric completion of the solitons.  The latter determines the asymptotics at infinity of the Cao solitons.

\begin{lemma} \label{t-neg-infty}
As $t\to-\infty$, 
\begin{equation*}
\varphi_a(t)= \begin{cases}
a+\tfrac{1}{n} a^{1-n}e^{- a}e^{nt}+O(e^{2nt}), & a>0.\\
e^t-\tfrac{1}{n+1}e^{2t}+O(e^{3t}),&a=0.
\end{cases}
\end{equation*}
\end{lemma}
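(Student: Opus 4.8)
The plan is to work directly from the defining relation (\ref{defining-phi}), which we rewrite (using $\frac{d}{ds}(F(s)e^s)=s^{n-1}e^s$) as
\[
\int_a^{\varphi_a(t)}\tau^{n-1}e^\tau\,d\tau=\frac{e^{nt}}{n}.
\]
Writing $G(s)=F(s)e^s$, so $G'(s)=s^{n-1}e^s$, this reads $G(\varphi_a(t))=G(a)+\tfrac1n e^{nt}$. First I would record the qualitative behaviour: since $F$ has positive leading coefficient, $G$ is a homeomorphism of $[a,\infty)$ onto $[G(a),\infty)$, and as the right-hand side tends to $G(a)$ we get $\varphi_a(t)\to a$ as $t\to-\infty$. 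The rest is a Taylor expansion of this implicit relation, and the only real difference between the two cases is whether the first nonvanishing derivative of $G$ at the limit point $a$ is $G'$ (when $a>0$) or $G^{(n)}$ (when $a=0$).

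\emph{Case $a>0$.} Here $G'(a)=a^{n-1}e^a\neq0$, so $G$ admits a $C^\infty$ inverse near $G(a)$; writing $\varphi_a(t)=G^{-1}\!\big(G(a)+\tfrac1n e^{nt}\big)$ and applying Taylor's theorem to $G^{-1}$ at $G(a)$ gives
\[
\varphi_a(t)=a+\frac{1}{G'(a)}\cdot\frac{e^{nt}}{n}+O(e^{2nt})=a+\tfrac1n a^{1-n}e^{-a}e^{nt}+O(e^{2nt}),
\]
the remainder being $O\big((e^{nt})^2\big)$ since $G^{-1}$ is $C^2$ near $G(a)$.

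\emph{Case $a=0$.} Now $G'(0)=\dots=G^{(n-1)}(0)=0$, so the inverse function theorem fails at $0$ and one must expand $G$ to its first two nonvanishing orders. Expanding $e^\tau=1+\tau+O(\tau^2)$ inside the integral,
\[
\int_0^{\varphi}\tau^{n-1}e^\tau\,d\tau=\frac{\varphi^n}{n}+\frac{\varphi^{n+1}}{n+1}+O(\varphi^{n+2}),
\]
so the implicit relation becomes $\varphi^n\big(1+\tfrac{n}{n+1}\varphi+O(\varphi^2)\big)=e^{nt}$. Taking $n$-th roots (legitimate since $\varphi,e^t>0$),
\[
\varphi_0(t)=e^t\Big(1-\tfrac1{n+1}\varphi_0(t)+O(\varphi_0(t)^2)\Big).
\]
A one-step bootstrap --- first $\varphi_0(t)=e^t+O(e^{2t})$, then substituting back --- gives $\varphi_0(t)=e^t-\tfrac1{n+1}e^{2t}+O(e^{3t})$.

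The main obstacle is the degenerate case $a=0$: because the linearization of $G$ at $0$ vanishes to order $n-1$, the expansion relies on pinning down the exact coefficients $\tfrac1n$ and $\tfrac1{n+1}$ of $\varphi^n$ and $\varphi^{n+1}$ in $G(\varphi)-G(0)$, and then inverting and bootstrapping carefully enough to control the remainder term; the case $a>0$ is a routine application of Taylor's theorem once the convergence $\varphi_a(t)\to a$ is in hand.
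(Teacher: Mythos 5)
Your proof is correct and follows exactly the route the paper intends: the paper's own proof is just the remark ``Easy calculation starting from (\ref{defining-phi})'', and your argument is a sound execution of that calculation, including the correct handling of the degenerate case $a=0$ where $G'(0)=0$ forces the expansion of $\int_0^\varphi\tau^{n-1}e^\tau\,d\tau$ to two orders and a bootstrap.
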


\begin{proof} Easy calculation starting from (\ref{defining-phi}).
\end{proof}

Since $e^t=r^2$, when $a=0$, the metric associated to $\varphi=\varphi_0$ on $\Cb^n\setminus\{0\}$ extends smoothly across the origin ($\varphi_0$ has a smooth development in powers of $e^t$). This is Cao's steady soliton (unique up to translation and rescaling) on $\Cb^n$.

When $a>0$,  then $\varphi_t(t)=a^{1-n}e^{-a}e^{nt}+O(e^{2nt})$ with $e^{nt}=r^{2n}$ (and again there is a smooth development in powers of $e^{nt}$), so formula (\ref{eq:5}) says that the metric associated to $\varphi=\varphi_a$ on the quotient $(\Cb^n\setminus\{0\})/\Zb_n\cong \cO(-n)\setminus \CP^{n-1}$ extends smoothly across the zero section $\CP^{n-1}$ to give a smooth metric on $\cO(-n)$.  This is Cao's family of steady solitons (a one-parameter family, up to translation and rescaling) on $\cO(-n)$.

\begin{remark}\label{cao-family-deformations}
  As $a\rightarrow0$,  the solitons on $\cO(-n)$ converge to the Cao soliton on $\Cb^n/\Zb_n$ with an orbifold singularity at the origin. This is an explicit example of our construction in this paper. It is also interesting to see what `bubble' occurs in this limit: an easy calculation gives \[\frac{\varphi_a(t+\log a)}a \rightarrow (1+e^{nt})^{\frac1n}\] (the translation in $t$ corresponds to homotheties in $\Cb^n$). The limit is the well-known Kähler Ricci-flat metric on the total space of $\cO(-n)$ constructed by Calabi \cite{Cal79} (for $n=2$ this is the Eguchi-Hanson metric).
\end{remark}

We now study the behaviour at infinity. The following lemma again follows directly from equation (\ref{defining-phi}).

\begin{lemma}\label{t-pos-infty}
As $t\to\infty$,
\begin{align*}
\varphi(t)&=nt-(n-1)\log t -n\log n+O\left(\frac{\log t}{t}\right).\\
\varphi_t(t)&=n-\frac{n-1}{t} +O\left(\frac{\log t}{t^2}\right).
\end{align*}\qed
\end{lemma}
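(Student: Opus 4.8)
The plan is to read off both expansions directly from the defining relation (\ref{defining-phi}) by taking logarithms and bootstrapping. Since $F$ has leading term $s^{n-1}$ with next term $-(n-1)s^{n-2}$, one has $F(s)/s^{n-1} = 1 - (n-1)/s + O(1/s^2)$ and hence $\log F(s) = (n-1)\log s + O(1/s)$ as $s\to\infty$. Because $F(\varphi)e^{\varphi} = e^{nt}/n + F(a)e^{a}\to\infty$ while $s\mapsto F(s)e^{s}$ is increasing on $[0,\infty)$ (its derivative is $s^{n-1}e^{s}$) and $\varphi>a\geq0$, we get $\varphi(t)\to\infty$. Taking logarithms of (\ref{defining-phi}) then gives
\[
\varphi + (n-1)\log\varphi + O(1/\varphi) \;=\; nt - \log n + \log\!\Bigl(1+\tfrac{nF(a)e^{a}}{e^{nt}}\Bigr) \;=\; nt - \log n + O(e^{-nt}).
\]

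First I would bootstrap this identity. A short comparison of growth rates shows $\varphi/t\to n$, so $\log\varphi = \log t + \log n + o(1)$ and the displayed identity yields the crude bound $\varphi = nt + O(\log t)$. Feeding this back in gives $\log\varphi = \log t + \log n + O\bigl((\log t)/t\bigr)$ and $O(1/\varphi) = O(1/t)$, whence
\[
\varphi = nt - \log n - (n-1)\bigl(\log t + \log n\bigr) + O\bigl((\log t)/t\bigr) = nt - (n-1)\log t - n\log n + O\bigl((\log t)/t\bigr),
\]
which is the first claimed expansion.

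For $\varphi_t$ I would differentiate (\ref{defining-phi}) using $\tfrac{d}{ds}[F(s)e^{s}] = s^{n-1}e^{s}$, which recovers the soliton equation (\ref{soliton-eq}) in the form $\varphi^{n-1}e^{\varphi}\varphi_t = e^{nt}$; combining with (\ref{defining-phi}) to substitute $e^{nt} = nF(\varphi)e^{\varphi} - nF(a)e^{a}$ gives
\[
\varphi_t = \frac{nF(\varphi)}{\varphi^{n-1}} - \frac{nF(a)e^{a}}{\varphi^{n-1}e^{\varphi}}.
\]
The second term is $O(e^{-nt})$ since $\varphi^{n-1}e^{\varphi} = e^{nt}/\varphi_t > e^{nt}/n$ by (\ref{eq:6}); the first term equals $n - n(n-1)/\varphi + O(1/\varphi^{2})$, and inserting $\varphi = nt + O(\log t)$ from the previous step turns $n(n-1)/\varphi$ into $(n-1)/t + O\bigl((\log t)/t^{2}\bigr)$. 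Hence $\varphi_t = n - (n-1)/t + O\bigl((\log t)/t^{2}\bigr)$, as claimed.

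The whole argument is elementary real analysis, and I do not anticipate any genuine obstacle — consistent with the lemma's own assertion that it follows directly from (\ref{defining-phi}). The only point requiring care is the order of operations in the bootstrap: one must first establish $\varphi = nt + O(\log t)$ before expanding $\log\varphi$, since otherwise that term is not controlled to the accuracy needed for the stated $O\bigl((\log t)/t\bigr)$ and $O\bigl((\log t)/t^{2}\bigr)$ remainders.
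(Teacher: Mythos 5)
Your argument is correct, and it is exactly the computation the paper has in mind: the paper omits the proof entirely, asserting only that the lemma ``again follows directly from equation (\ref{defining-phi})'', which is precisely what you carry out. Your care with the bootstrap order (establishing $\varphi = nt + O(\log t)$ before expanding $\log\varphi$) and your use of the bound $\varphi_t < n$ from (\ref{eq:6}) to control the $F(a)e^{a}$ correction are both sound.
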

Therefore the metric has the asymptotic behaviour when $t\rightarrow+\infty$
\begin{equation}
  \label{eq:7}
  g \sim n \big( \tfrac14 dt^2 + \eta^2 + t g_{FS} \big)
\end{equation}
and in particular the volume of the ball of radius $t$ is of order $t^n$, that is half-dimensional; also the injectivity radius is bounded below. More generally:
\begin{lemma}\label{lem:bounded-curvature}
  The Cao solitons  have injectivity radius bounded below and curvature and all their covariant derivatives bounded.
\end{lemma}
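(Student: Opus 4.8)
The plan is to split the underlying manifold --- which is $\Cb^n$ when $a=0$ and the total space of $\cO(-n)$ when $a>0$ --- into a core $K=\{t\le 0\}$ and an end $E=\{t\ge 0\}$, and to treat the two separately. By Lemma~\ref{t-neg-infty} and the discussion following it, $g=g_\Phi$ extends to a smooth metric across the origin (resp.\ across the zero section $\CP^{n-1}$), so $K$ is a compact manifold-with-boundary carrying a smooth metric; on $K$ the curvature $\sec$ and all its covariant derivatives are therefore bounded, and the injectivity radius is bounded below. Writing $\tau(t)=\int_0^t\tfrac12\sqrt{\varphi_t(u)}\,du$ for the radial arclength of $g$, we have $|d\tau|_g=1$ and, by Lemma~\ref{t-pos-infty}, $\tau\sim\tfrac{\sqrt n}{2}\,t\to+\infty$ as $t\to+\infty$, so $g$ is complete. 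It remains to establish the bounds on $E$.

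On $E$ I would first upgrade Lemma~\ref{t-pos-infty} to bounds on all derivatives of $\varphi$. The soliton equation (\ref{soliton-eq}) can be written $\varphi_t=\varphi^{\,1-n}e^{\,nt-\varphi}$, and differentiating this repeatedly expresses each $\varphi^{(k)}(t)$ as an explicit smooth function of $t$, $\varphi$ and $\varphi_t$; substituting $\varphi=nt+O(\log t)$ and $\varphi_t=n+O(t^{-1})$ then gives uniform bounds on $\varphi_t$ and on all higher $\varphi^{(k)}$, with $\varphi^{(k)}\to 0$ for $k\ge 2$. In the coordinate $\tau$ the metric (\ref{eq:5}) is the cohomogeneity-one metric
\[
g=d\tau^2+\varphi_t\,\eta^2+\varphi\,g_{FS},
\]
which is the model metric $g_\infty=n\bigl(\tfrac14 dt^2+\eta^2+t\,g_{FS}\bigr)$ of (\ref{eq:7}) up to corrections that --- together with all their $\tau$- and $S^{2n-1}$-derivatives --- are controlled by the bounds just obtained: the Hopf-circle warping $\sqrt{\varphi_t}$ tends to $\sqrt n$, and the $\CP^{n-1}$ warping $\sqrt{\varphi}$ is $\sim(2\sqrt n\,\tau)^{1/2}$.

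I would then read off the curvature from the O'Neill formulas for this metric, which express every sectional curvature of $g$ in terms of the (bounded) curvatures of $g_{FS}$ and of the Hopf connection, the warping functions $\sqrt{\varphi_t}$ and $\sqrt\varphi$, and their $\tau$-derivatives. Feeding in the asymptotics, the dominant term is $\varphi^{-1}$ times the curvature of $g_{FS}$, and every contribution is $O(t^{-1})$; hence $|\sec|_g=O(t^{-1})$ on $E$, and differentiating the same formulas and using the higher-derivative bounds on $\varphi$ gives $|\nabla^k\sec|_g\to 0$ as well. For the injectivity radius: $|\sec|_g$ is bounded, and since the Hopf-circle factor keeps length $2\pi\sqrt{\varphi_t}\to 2\pi\sqrt n$ while the $\CP^{n-1}$ factor has curvature tending to $0$, the unit $g$-ball about any point of $E$ is uniformly bi-Lipschitz to a Euclidean unit $2n$-ball, so $\vol B(p,1)\ge v_0>0$ uniformly on $E$; the Cheeger--Gromov--Taylor injectivity radius estimate (bounded curvature together with non-collapsed unit balls) then gives a uniform positive lower bound for the injectivity radius of $g$ on $E$. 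Together with the core estimates this proves the lemma. As an alternative for the higher-derivative bounds, once $|\sec|_g$ is known to be bounded on all of $M$ one may instead apply Shi's local derivative estimates to the eternal Ricci flow $s\mapsto(\text{flow of }X)^*g$, whose uniform space-time curvature bound is precisely the bound on $|\sec|_g$.

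The step I expect to be the main obstacle is the asymptotic analysis on $E$: making the bootstrap from the first-order ODE (\ref{soliton-eq}) precise enough to control all covariant derivatives of $g$ uniformly, and --- relatedly --- the injectivity radius lower bound at infinity, which cannot come from volume comparison (the soliton is volume-collapsed at the scale of its diameter, since $\vol B(p,t)\sim t^n$) and instead rests on non-collapsing of \emph{unit} balls, a consequence of the bounded, non-degenerate Hopf-circle direction together with the vanishing of the $\CP^{n-1}$ curvature at infinity.
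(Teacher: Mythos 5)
Your proposal is correct and follows the same route as the paper, which disposes of the lemma in two lines: the explicit cohomogeneity-one form of the metric reduces everything to bounds on all derivatives of $\varphi$ (obtained by differentiating the first-order ODE and feeding in $\varphi\to\infty$, $\varphi_t\to n$), and the injectivity radius bound is read off from the asymptotic model $n\bigl(\tfrac14 dt^2+\eta^2+t\,g_{FS}\bigr)$. You have simply filled in the details the authors leave implicit --- the ODE bootstrap, the warped-product curvature formulas, and the unit-ball non-collapsing needed for the Cheeger--Gromov--Taylor injectivity radius estimate --- and your observation that volume comparison at large scales would fail (the soliton has half-dimensional volume growth) while unit balls remain non-collapsed is exactly the right point to make precise.
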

\begin{proof}
  It remains to be proved that the Riemannian curvature and its derivatives are bounded. Given the explicit form of the metric, this  reduces to the fact that all derivatives of $\varphi$ are bounded, which is obvious (at infinity $\varphi_t\rightarrow n$ and the higher derivatives go to zero).
\end{proof}

\section{Analysis on the Cao soliton} \label{cao-spaces}

For the rest of this article, we need only the soliton obtained for $a=0$; that is, the Cao soliton on $\Cb^n$ with Kähler form $\omega_0=\varphi\omega_{FS}+\varphi_t\omega_{cyl}$.  In this section we do some analysis on this soliton and on manifolds asymptotic to it.

We first work generally on a Riemannian manifold $(M,g)$ with a single end isometric to the portion $\{T_0\leq t\}$ of the Cao soliton $(\Cb^n,\omega_0)$, or to its quotient by some freely-acting $G\subseteq SU(n)$.
Fix $\delta>0$ and define a weight function
\begin{equation*}
  \label{eq:8}
  w(t) = e^{\delta \varphi}.
\end{equation*}
So $w\geq1$ and at infinity $w(t)\sim e^{\delta nt}$. We use the Hölder weighted spaces $C^{k,\alpha}_\delta(M)=w^{-1}C^{k,\alpha}(M)$, where $C^{k,\alpha}(M)$ are the usual Hölder spaces.

\begin{proposition}[Schauder estimate for Cao asymptotics]\label{schauder-cao}
    Let $a$ be a symmetric positive-definite bivector field, and $b$ a vector field, whose (unweighted) $C^{\alpha}$ norms are finite. Suppose moreover that for some real $\lambda>0$, the uniform global bound $a\geq\lambda g^{-1}$ holds.  Denote by $L$ the second-order elliptic operator
  \[
Lu=\langle a, \nabla^2u\rangle +\langle b, du\rangle.
\]

Then there exists a constant $C$, such that
    for all $t_0$, $t_1$ with $T_0+1\leq t_0+1\leq t_1<\infty$, for all $u$ on $\{t< t_1\}\subseteq M$,
  \[
  \|u\|_{C^{2,\alpha}_\delta(\{t< t_0\})}
  \leq C\left[\|u\|_{C^{0}_\delta(\{t< t_1\})}+\|Lu\|_{C^{\alpha}_\delta(\{t< t_1\})}\right].
  \]
\end{proposition}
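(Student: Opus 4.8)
The plan is to derive this weighted Schauder estimate from the standard interior Schauder estimates on the Cao soliton, exploiting the fact that the weight $w = e^{\delta\varphi}$ is comparable on unit balls (since $0 < \varphi_t < n$, hence $|\nabla\log w| = \delta|\varphi_t| \cdot |\nabla t|$ is bounded), and the fact that the Cao soliton has bounded geometry (Lemma \ref{lem:bounded-curvature}: bounded curvature, bounded covariant derivatives, and injectivity radius bounded below). First I would record the local version: because $(M,g)$ has uniformly bounded geometry on the end, there is a uniform radius $r_0$ so that every ball $B(p, 2r_0)$ admits harmonic (or geodesic normal) coordinates in which $g$ and its derivatives are controlled on both sides; in these coordinates $L$ becomes a uniformly elliptic operator with $C^\alpha$ coefficients (the hypotheses $a \geq \lambda g^{-1}$ and $\|a\|_{C^\alpha}, \|b\|_{C^\alpha} < \infty$ transfer, again using bounded geometry to control the change of frame). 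The usual Schauder estimate on $B(p, r_0)$ then gives
\[
\|u\|_{C^{2,\alpha}(B(p,r_0))} \leq C\bigl[\|u\|_{C^0(B(p,2r_0))} + \|Lu\|_{C^\alpha(B(p,2r_0))}\bigr],
\]
with $C$ independent of $p$.

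The second step is to upgrade this to the weighted statement by a covering argument. Cover $\{t < t_0\}$ by balls $B(p_i, r_0)$ with the $B(p_i, 2r_0)$ having bounded overlap and all contained in $\{t < t_1\}$ (this is where the hypothesis $t_0 + 1 \leq t_1$ is used: a unit-sized collar suffices since, by \eqref{eq:7}, the $t$-coordinate is comparable to a distance-like function with $|\nabla t|$ bounded, so $r_0$-balls move $t$ by a bounded amount; shrink $r_0$ if necessary so that $2r_0$-balls stay within the collar). On each such ball $w$ varies by at most a fixed multiplicative constant $\Lambda$, so multiplying the local estimate through by $w(p_i)$ and using $w(p_i) \|v\|_{C^{j,\alpha}(B(p_i, 2r_0))} \le \Lambda \|w v\|_{C^{j,\alpha}(B(p_i, 2r_0))} \le \Lambda \|v\|_{C^{j,\alpha}_\delta(B(p_i,2r_0))} \cdot (\text{correction})$ — more carefully, since $w^{-1}$ and its derivatives up to order two are bounded by $\Lambda$ times $w(p_i)^{-1}$ on the ball — one gets
\[
\|u\|_{C^{2,\alpha}_\delta(B(p_i, r_0))} \leq C'\bigl[\|u\|_{C^0_\delta(B(p_i, 2r_0))} + \|Lu\|_{C^\alpha_\delta(B(p_i, 2r_0))}\bigr]
\]
with $C'$ uniform in $i$. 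Taking the supremum over $i$ and using the bounded overlap of the cover to pass from $\sup_i \|\cdot\|_{C^{2,\alpha}_\delta(B(p_i,r_0))}$ to $\|\cdot\|_{C^{2,\alpha}_\delta(\{t < t_0\})}$, and from the right-hand side back to global norms on $\{t < t_1\}$, yields the claimed inequality.

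The main subtlety — not deep, but the point requiring care — is the interaction between the weight and the Hölder seminorm of second derivatives at the transition between overlapping balls: the weighted $C^{2,\alpha}_\delta$ norm of $u$ over $\{t < t_0\}$ involves difference quotients of $\nabla^2 u$ at pairs of nearby points that may lie in different balls of the cover, so one must check that the local estimates glue consistently. This is handled in the standard way: for points at distance $\leq r_0$ apart, both lie in a common ball of a slightly enlarged cover, and for points farther apart the Hölder difference is dominated by the $C^2_\delta$ bounds already obtained, using that $w$ is slowly varying. A secondary point is verifying that the hypotheses on $a$ and $b$ are exactly what is needed to make the local Schauder constant uniform — the uniform ellipticity $a \geq \lambda g^{-1}$ is essential, and the finiteness of the unweighted $C^\alpha$ norms of $a, b$ (rather than weighted norms) is the correct hypothesis because $L$ is applied to $u$, whose growth the weight $w^{-1}$ already controls. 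No genuinely hard analysis is involved; the content is bookkeeping with weights on a manifold of bounded geometry.
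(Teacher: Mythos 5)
Your proposal is correct and takes essentially the same route as the paper's proof: uniform harmonic coordinates from bounded geometry (Lemma \ref{lem:bounded-curvature}), the local Euclidean Schauder estimate with uniform constant on balls of fixed radius, comparability of the weight $w$ on such balls, and a covering argument to assemble the global weighted estimate. The extra care you take with the H\"older seminorm across overlapping balls is a standard point the paper leaves implicit, not a difference in method.
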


\begin{proof}
  By Lemma \ref{lem:bounded-curvature}, there exist $s>0$ and $Q>0$ such that for all $x\in M$, there exist (harmonic) co-ordinates on the ball $B_x(s)$ in which the metric tensor $g$ is $C^{1,\alpha}$ controlled by $Q$.  See, for example, \cite[Section 1.2]{hebey}.  Therefore on balls of radii $\leq s$, the Schauder norms with respect to the metric $g$ are uniformly equivalent to the Euclidean Schauder norms.

  Moreover, by assumption, the coefficients $a$, $b$ of the operator are uniformly controlled on balls $B_x(s)$.
  Therefore the Euclidean Schauder estimate \cite[Theorem 6.2]{GT} also holds, with uniform constant, on balls $B_x(s)$ with Riemannian Schauder norm.

  The weight function $w_\delta$ has the property that, for some uniform constant $C$, for all $x$ and all $y\in B_x(s)$,
$C^{-1}w(x)\leq w(y)\leq C w(x)$.
  We may therefore combine the uniform Schauder estimates on balls $B_x(s)$ to give the desired global weighted estimate.
\end{proof}

We now specialize to the Cao manifold $(\Cb^n, \omega_0)$ itself, with the weight function
$w(t) = e^{\delta \varphi}$.
itself. 
Denote by $\Delta$ the Laplace-Beltrami operator of $\omega_0$, and by $X$ (as in Section  \ref{radial-potential}) the radial vector field $-2r\partial/\partial r=-4\partial/\partial t$.  The linearization of the soliton equation on a Kähler potential is the operator $\Delta-X$.  We have the following $C^0$ estimates for this operator:

\begin{lemma}Let $0<\delta<1$.
  \begin{enumerate}
  \item For $t_0\in\Rb$, and $u$ on $\{t\leq t_0\}\subseteq \Cb^n$,
        \begin{equation}
\label{eq:12}
        \sup_{t\leq t_0} w |u| \leq \max\left[ \sup_{t= t_0} w |u|, \ \frac1{4\delta(1-\delta)n} \sup_{t\leq t_0} w |(\Delta-X)u| \right].
    \end{equation}
  \item For $r_0\in [1,\infty]$, and $u$ on $\{1\leq r\leq r_0\}\subseteq \Cb^n$ such that $u|_{\{r=r_0\}}=0$ (if $r_0<\infty$) or $u\in C^2_\delta(\{1\leq r\})$ (if $r_0=\infty$),
    \begin{equation}
      \label{eq:28}
        \sup_{1\leq r\leq r_0} w |u| \leq \max\left[ \sup_{r= 1} w |u|, \ \frac1{4\delta(1-\delta)n} \sup_{1\leq r\leq r_0} w |(\Delta-X)u| \right].
    \end{equation}
  \end{enumerate}
\end{lemma}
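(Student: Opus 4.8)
The plan is to prove both inequalities by a maximum-principle comparison in which the weight itself serves as the barrier: set $h := w^{-1} = e^{-\delta\varphi}$. The crucial ingredient is the differential inequality $(\Delta - X)h \le -4\delta(1-\delta)n\,h$, which I would obtain as follows. Since $\varphi$ depends on $t$ alone, the explicit metric gives $g^{tt} = 4/\varphi_t$ (the $dt^2$-part of $g$ is $\tfrac14\varphi_t\,dt^2$), hence $|\nabla\varphi|^2 = 4\varphi_t$ and $X\varphi = -4\varphi_t$; moreover the Laplace--Beltrami operator of $\omega_0$ on $U(n)$-invariant functions reads $\Delta\chi = 4\varphi^{1-n}\varphi_t^{-1}\,\partial_t(\varphi^{n-1}\chi_t)$ for $\chi=\chi(t)$. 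Differentiating the soliton equation (\ref{soliton-eq}) in the form $\log(\varphi^{n-1}\varphi_t)+\varphi = nt$ and comparing with this formula yields $\Delta\varphi = 4n - 4\varphi_t$, so that $(\Delta-X)\varphi = 4n$. A short computation then gives
\begin{equation*}
(\Delta - X)h = \delta^2 h\,|\nabla\varphi|^2 - \delta h\,(\Delta-X)\varphi = 4\delta h\,(\delta\varphi_t - n),
\end{equation*}
and, since $0<\delta<1$ and $0<\varphi_t<n$ by (\ref{eq:6}), one has $\delta\varphi_t - n < -(1-\delta)n$, which is the claimed inequality. The identical computation with $\delta$ replaced by any $\sigma\in(0,1)$ shows $(\Delta-X)(e^{-\sigma\varphi}) \le 0$; I will use this for one $\sigma<\delta$ in the noncompact case.

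Granting the barrier inequality, both estimates follow from the same scheme. Write $\psi := (\Delta-X)u$ and let $N$ denote the maximum on the right-hand side of the inequality under consideration, which we may assume finite. I would set $v := \pm u - Nh$. On a boundary component where $w|u|$ is controlled, $|u| \le Nw^{-1} = Nh$ forces $v\le 0$; where $u$ is required to vanish, $v \le 0$ trivially; and in the interior, since $wh \equiv 1$ and $4\delta(1-\delta)n\,N \ge \sup w|\psi|$, the barrier inequality gives
\begin{equation*}
(\Delta - X)v = \pm\psi - N(\Delta-X)h \ge \pm\psi + 4\delta(1-\delta)n\,N h \ge \pm\psi + |\psi| \ge 0.
\end{equation*}
As $\Delta - X$ has no zeroth-order term, the weak maximum principle on any bounded domain \cite[Theorem 3.1]{GT} yields $v \le 0$, i.e.\ $w|u| \le N$. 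For part (1) the domain $\{t \le t_0\}$ is the closed ball $\{|z|^2 \le e^{t_0}\}\subseteq\Cb^n$, which is compact --- the origin being an ordinary interior point, as the Cao metric, $h$ and $\Delta - X$ all extend smoothly there --- so this applies directly; for part (2) with $r_0<\infty$ the domain $\{1\le r\le r_0\}$ is a compact annulus and the hypothesis $u|_{\{r=r_0\}}=0$ handles the outer boundary, so again we conclude at once.

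The one case needing an extra device is part (2) with $r_0 = \infty$, where $\{1\le r\}$ is noncompact and on the far boundary $\{r=R\}$ of an exhausting annulus the barrier $Nh$ need not dominate $u$: a priori we only know $|u|\le Mw^{-1}$ for $M := \sup w|u|$, possibly with $M>N$ (if $M\le N$ the estimate is immediate). I would repair this by adding the slowly decaying barrier $h_\sigma := e^{-\sigma\varphi}$, $\sigma\in(0,\delta)$, and running the comparison with $v := \pm u - Nh - \epsilon h_\sigma$ on $\{1\le r\le R\}$ for each $\epsilon>0$: since $(\Delta-X)h_\sigma \le 0$ the interior inequality is unaffected, on $\{r=1\}$ still $v<0$, and on $\{r=R\}$ one needs $(M-N)\,e^{-(\delta-\sigma)\varphi(R)} \le \epsilon$, which holds once $R$ is large because $\varphi(R)\to\infty$ (Lemma \ref{t-pos-infty}) and $\delta>\sigma$. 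The maximum principle then gives $\pm u \le Nh + \epsilon h_\sigma$ on each such annulus; fixing a point $x$ and, for each $\epsilon$, choosing $R$ large enough (so that $R>r(x)$ and the boundary estimate holds), then letting $\epsilon\to 0$, gives $w|u|\le N$ everywhere. I expect the only real subtleties to be keeping the sharp constant $4\delta(1-\delta)n$ in the barrier inequality --- exactly where $\varphi_t < n$ enters --- and this two-barrier trick in the noncompact case.
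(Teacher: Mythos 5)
Your proof is correct and follows essentially the same route as the paper: the identical barrier computation $(\Delta-X)e^{-\delta\varphi}=-4\delta(n-\delta\varphi_t)e^{-\delta\varphi}$ (the paper obtains it by writing $\varphi^{n-1}\varphi_t e^{-\delta\varphi}=e^{nt-(1+\delta)\varphi}$ via the soliton equation rather than through $(\Delta-X)\varphi=4n$ and $|\nabla\varphi|^2=4\varphi_t$, but the constant $4\delta(1-\delta)n$ comes from $0<\varphi_t<n$ in both cases), followed by the maximum principle applied to the comparison of $\pm u$ with $N e^{-\delta\varphi}$. The only point of divergence is the noncompact case $r_0=\infty$, where the paper approximates $u$ by $\chi_k u$ with cutoffs and passes to the limit in the compact estimate, whereas you add the auxiliary barrier $\epsilon e^{-\sigma\varphi}$ with $\sigma<\delta$ to absorb the outer boundary of an exhausting annulus; both devices are standard and work equally well here.
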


\begin{proof}
  The Laplacian of a radial function is
  \begin{equation*}
    \label{eq:9}
    \Delta f = \frac{4}{\varphi^{n-1}\varphi_t}
             \frac{\partial}{\partial t}\big( \varphi^{n-1} \frac{\partial f}{\partial t}\big).
       \end{equation*}
Therefore, using equation (\ref{soliton-eq}) and $X=-4\frac{\partial}{\partial t}$,
       \begin{align*}
         (\Delta-X)e^{-\delta\varphi}
         &= -\frac{4\delta}{\varphi^{n-1}\varphi_t} \frac{\partial}{\partial t}\big(e^{-\delta\varphi}\varphi^{n-1}\varphi_t\big) + 4\delta\varphi_t \\
         &= -\frac{4\delta}{e^{nt-\varphi}}\frac{\partial}{\partial t}\big(e^{nt-(1+\delta)\varphi}\big) + 4\delta\varphi_t \\
         &= -4\delta(n-\delta\varphi_t)e^{-\delta\varphi}.
\end{align*}
Since $0<\varphi_t<n$ by (\ref{eq:6}), one deduces
\begin{equation*}
  \label{eq:10}
  4\delta(1-\delta)n < - e^{\delta\varphi}(\Delta-X)e^{-\delta\varphi} < 4\delta n.
\end{equation*}

This estimate enables the use of a barrier argument: if $(\Delta-X)f=g$, then write $f=e^{-\delta\varphi}F$; then
\begin{equation}
  \label{eq:11}
  (\Delta-X)F + F e^{\delta\varphi} (\Delta-X)e^{-\delta\varphi} - 2\delta \langle d\varphi,dF \rangle = e^{\delta\varphi}g.
\end{equation}

On a compact domain $t\leq t_0$,  the maximum principle applied to (\ref{eq:11}) implies
\begin{equation*}
 \sup_{\{t\leq t_0\}} e^{\delta\varphi}|f| = \sup_{\{t\leq t_0\}} |F| \leq \max\left[\sup_{\{t=t_0\}}|F|+\frac1{4\delta(1-\delta)n} \sup_{\{t\leq t_0\}} e^{\delta\varphi}|g|\right].
 \end{equation*}
This is (\ref{eq:12}).  Similarly, if $r_0<\infty$, then on the compact domain $1\leq r\leq r_0$, if $u|_{\{r=r_0\}}=0$, then we obtain (\ref{eq:28}).

Finally, if $r_0=\infty$, we may select a sequence of compactly-supported functions $\chi_k$, with $0\leq \chi_k\leq 1$, with $\chi_k\to 1$ pointwise as $k\to \infty$, such that for all $u\in  C^2_\delta(\{1\leq r\})$,
\[
\lim_{k\to\infty}\|(\Delta-X)(\chi_ku)\|_{C^0_\delta(\{1\leq r\})}=\|(\Delta-X)u\|_{C^0_\delta(\{1\leq r\})}.
\]
(Indeed, we may take $\chi_k(t)=\chi(t/k)$, for some fixed cutoff function $\chi$.)  We can then extract the noncompact ($r_0=\infty$) case of (\ref{eq:28}) from the sequence of compact results.
\end{proof}

\begin{theorem}[Drift Laplacian on the Cao soliton]\label{th:analysis-cao}
  Let $0<\delta<1$. Then the operator \[\Delta-X:C^{2,\alpha}_\delta(\Cb^n)\rightarrow C^\alpha_\delta(\Cb^n)\]
  is an isomorphism.
\end{theorem}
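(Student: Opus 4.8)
The plan is to establish the isomorphism by combining the a priori estimates already assembled with a standard exhaustion/continuity argument. For \emph{injectivity}, observe that a solution $u \in C^{2,\alpha}_\delta(\Cb^n)$ of $(\Delta-X)u = 0$ satisfies $u|_{\{r=1\}}$ bounded, but the estimate \eqref{eq:12} (or rather its version localized away from the origin, \eqref{eq:28} with $r_0 = \infty$) forces $\sup_{1 \le r} w|u|$ to be controlled by $\sup_{r=1} w|u|$ only — which does not immediately give $u \equiv 0$. So I would instead apply \eqref{eq:12} on the domains $\{t \le t_0\}$, let $t_0 \to \infty$: the right-hand side becomes $\lim_{t_0\to\infty} \sup_{t=t_0} w|u|$, which vanishes because $u \in C^0_\delta$ means $w|u| \to 0$ at infinity. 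Hence $u \equiv 0$, giving injectivity.

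For \emph{surjectivity}, fix $f \in C^\alpha_\delta(\Cb^n)$. I would solve the Dirichlet problem $(\Delta-X)u_k = f$ on the ball $\{r \le k\}$ with $u_k|_{\{r=k\}} = 0$; each such problem is solvable by classical elliptic theory since $\Delta - X$ is a uniformly elliptic operator on the compact manifold-with-boundary $\{r \le k\}$ (the Cao metric is smooth and complete, with bounded geometry by Lemma \ref{lem:bounded-curvature}). The $C^0$-barrier estimate \eqref{eq:12} applied on $\{t \le t_0\}$ for any $t_0 < \log(k^2)$, together with the fact that $u_k$ vanishes on the outer boundary, gives a uniform (in $k$) bound $\|u_k\|_{C^0_\delta} \le \frac{1}{4\delta(1-\delta)n}\|f\|_{C^0_\delta}$ — note that as $t_0$ approaches the boundary value the boundary term drops out. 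Then the interior Schauder estimate, Proposition \ref{schauder-cao} (applied with $a = g^{-1}$, $b = -X$, which are admissible: $a$ is uniformly positive-definite with finite $C^\alpha$ norm, and $X$ has bounded $C^\alpha$ norm since its components involve only $\varphi_t$ and derivatives, all bounded), upgrades this to a uniform bound $\|u_k\|_{C^{2,\alpha}_\delta(\{t < t_0\})} \le C[\|f\|_{C^\alpha_\delta} + \|f\|_{C^0_\delta}]$ on each fixed sublevel set. A diagonal/Arzelà–Ascoli argument then extracts a subsequence converging in $C^2_{loc}$ to a limit $u$ solving $(\Delta - X)u = f$ on all of $\Cb^n$, with $\|u\|_{C^{2,\alpha}_\delta(\Cb^n)} \le C\|f\|_{C^\alpha_\delta}$. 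This $u$ lies in $C^{2,\alpha}_\delta$, completing surjectivity; the same a priori bound also reproves continuity of the inverse.

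The main obstacle — though it is more a bookkeeping subtlety than a genuine difficulty — is ensuring the $C^0$ estimate passes cleanly from the bounded balls $\{r \le k\}$ to the limit while keeping the boundary term under control, and verifying that $C^0_\delta$-membership of $u_k$ (automatic, since $u_k$ is compactly supported on each ball, but one needs uniformity) interacts correctly with the weight $w = e^{\delta\varphi}$ near infinity. Concretely: on $\{t \le t_0\}$ with $t_0$ just below the boundary level of the $k$-th ball, \eqref{eq:12} reads $\sup_{t\le t_0} w|u_k| \le \max[\sup_{t=t_0} w|u_k|,\, \frac{1}{4\delta(1-\delta)n}\sup w|f|]$; one must argue that the first term in the max is either dominated by the second or can be absorbed as $k \to \infty$. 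This follows because the final portion of the exhaustion — the annulus $\{t_0 \le t \le \log k^2\}$ — also obeys \eqref{eq:28}-type control with zero outer boundary data, so $\sup_{t=t_0} w|u_k|$ is itself bounded by $\frac{1}{4\delta(1-\delta)n}\sup w|f|$. Everything else is routine elliptic theory.
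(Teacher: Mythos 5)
Your surjectivity argument is essentially the paper's: exhaust by sublevel sets, solve the Dirichlet problem with zero outer boundary data so that the boundary term in the barrier estimate \eqref{eq:12} drops out, obtain a uniform $C^0_\delta$ bound, upgrade to $C^{2,\alpha}_\delta$ via Proposition \ref{schauder-cao}, and pass to a limit by a diagonal argument. That part is correct, and the ``bookkeeping subtlety'' you flag at the end is not actually an issue: since $u_k$ vanishes identically on the outer boundary, one simply applies \eqref{eq:12} with $t_0$ equal to the boundary level, and the boundary entry of the max is exactly zero; no annulus argument is needed.

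The injectivity step, however, contains a genuine gap. You assert that $u\in C^0_\delta$ means $w|u|\to 0$ at infinity, and you use this to conclude that $\sup_{t=t_0}w|u|\to 0$ as $t_0\to\infty$. But the space $C^{k,\alpha}_\delta=w^{-1}C^{k,\alpha}$ as defined in the paper only encodes \emph{boundedness} of $w|u|$ (and of the weighted derivatives), not decay: a function with $w|u|\equiv 1$ near infinity lies in $C^0_\delta$, and for it your right-hand side would tend to $1$, not $0$. As written, the argument therefore does not exclude a kernel element whose weighted sup norm is attained ``at infinity.'' The paper closes exactly this gap by running the barrier estimate \eqref{eq:12} with a strictly smaller weight parameter $\delta'\in(0,\delta)$ (legitimate, since $\delta'$ also lies in $(0,1)$): then $\sup_{t\le t_0}e^{\delta'\varphi}|u|\le \sup_{t=t_0}e^{\delta'\varphi}|u|=e^{(\delta'-\delta)\varphi(t_0)}\sup_{t=t_0}e^{\delta\varphi}|u|\le e^{(\delta'-\delta)\varphi(t_0)}\,\|u\|_{C^0_\delta}\to 0$ as $t_0\to\infty$, because $\delta'-\delta<0$ and $\varphi(t_0)\to\infty$. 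With that one substitution your injectivity argument becomes complete and coincides with the paper's.
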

\begin{proof}

  To prove surjectivity: let $g\in C^\alpha_\delta(\Cb^n)$.  For each of a sequence of domains $\{t\leq t_i\}$, with $t_i\to\infty$, let $f_i$ be the solution to the Dirichlet problem $(\Delta-X)f_i=g$, $f_i|_{\{t=t_i\}}=0$.  Such a solution exists since $\Delta-X$ has no zero-th order term.   By the estimate (\ref{eq:12}),
  \[
  \sup_{t\leq t_i} w |f_i| \leq \frac1{4\delta(1-\delta)n} \sup_{t\leq t_i} w |(\Delta-X)f_i|=\frac1{4\delta(1-\delta)n} \sup_{t\leq t_i} w|g|.
\]
So, by the Schauder estimate Proposition \ref{schauder-cao}, for all $t_0$ and all $i$ such that $t_i\geq t_0+1$,
\[
\|f_i\|_{C^{2,\alpha}_\delta(\{t\leq t_0\})}
  \leq C\left[\|f_i\|_{C^{0}_\delta(\{t\leq t_i\})}+\|(\Delta-X)f_i\|_{C^{\alpha}_\delta(\{t\leq t_i\})}\right]\leq C \|g\|_{C^{\alpha}_\delta(\{t\leq t_i\})},
  \]
where the constant $C$ is independent both of $t_0$ and of $i$.

By a diagonal argument, we can extract a subsequence, also denoted $f_i$, which $C^2$-converges on each compact subset.  Let $f$ denote the limit.  On each compact subset $\{t\leq t_0\}$, we have that
\[
\|f\|_{C^{2,\alpha}_\delta(\{t\leq t_0\})}
\leq \limsup_{i\to\infty}\|f_i\|_{C^{2,\alpha}_\delta(\{t\leq t_0\})}
  \leq C \|g\|_{C^{\alpha}_\delta}.
\]
So $\|f\|_{C^{2,\alpha}_\delta}\leq c\|g\|_{C^\alpha_\delta}$. This proves the surjectivity of $\Delta-X$.

To prove injectivity: suppose that $(\Delta-X)f=0$ with $f=O(e^{-\delta\varphi})$.  Let $\delta'$ be such that $0<\delta'<\delta$.  Applying the estimate (\ref{eq:12}) with the parameter $\delta'$,
\begin{equation*}
  \sup_{t\leq t_0} e^{\delta'\varphi}|f| \leq \sup_{t=t_0} e^{\delta'\varphi}|f|
  =e^{(\delta'-\delta)\varphi(t_0)}\sup_{t=t_0} e^{\delta\varphi}|f|.
\end{equation*}
The right hand side tends to 0 as $t_0\rightarrow\infty$, so $f=0$.
\end{proof}

\section{Analysis on finite quotients of the Cao soliton}
\label{sec:analys-finite-quot}

For the rest of this article, fix a finite subgroup $G\subset SU(n)$ acting freely on $\Cb^n\setminus\{0\}$.  We now get an orbifold soliton $(C_G=(\Cb^n\setminus\{0\})/G,\omega_0)$ with an isolated singularity at the origin.

It is possible to construct weighted orbifold Hölder spaces on $C_G$, consisting of the functions on $C_G$ whose lifts to $\Cb^n\setminus\{0\}$ extend to functions in the spaces $C^{k,\gamma}_\delta(\Cb_n)$ of the previous section.  By
Theorem \ref{th:analysis-cao}, the operator $\Delta - X$ is an isomorphism between appropriate such weighted-at-infinity orbifold  Hölder spaces.  However, this result is not sufficient for our gluing procedure.

We need to consider Hölder spaces which are also nontrivially weighted  at the origin, with weight functions polynomial in the distance to the origin.  We take as the weight function the positive radial function
\[
w_0(t):=\begin{cases}e^{\gamma t/2},&t<0, \\
e^{\delta\varphi(t)},&t>0,
\end{cases}
\]
(we can smooth this function at $t=0$ but this is not really needed). So $w_0(t)\sim e^{\delta nt}$ at infinity (the weights of the previous section) and $w_0(t)\sim r^\gamma$ at the origin. If it is necessary to specify the parameters, we will use the notation $w_{0;\gamma,\delta}$ for $w_0$.

Choose a positive function $\sigma:C_G\rightarrow\Rb$ such that at each point $x$, the injectivity radius at $x$ is greater than $\sigma(x)$. We can take $\sigma=\sigma(t)$ to be a radial function, with
\begin{equation}\label{sigma-def}
  \sigma(t) = \begin{cases} cst. \, e^{t/2} = cst. \, r, & t \ll 0, \\
    cst, & t \gg 0. \end{cases}
  \end{equation}
We can now define the weighted Hölder space $C^{k,\alpha}_{\gamma,\delta}(C_G)$ by the norm
\begin{equation}
 \|f\|_{C^{k,\alpha}_{\gamma,\delta}} =
  \sum_{i=0}^k \sup \sigma(x)^i w_0(x) |\nabla^i f| + \sup \sigma(x)^{k+\alpha} w_0(x) \sup_{y\in B_x(\sigma(x))} \frac{|P_{y,x}\nabla^k f(y) - \nabla^kf(x)|}{d(x,y)^\alpha},\label{eq:15}
\end{equation}
where $P_{y,x}$ is the parallel transport along the unique minimizing geodesic from $y$ to $x$.  (We use the Riemannian metric $\omega_0$ for all geometric aspects of this construction:  $\nabla$, $|\cdot|$, $d(\cdot,\cdot)$, $P_{x,y}$.) This weighted Hölder norm is equivalent to the previous section's norm  on $\Cb^n$, for functions supported on $\{t>0\}$, and to the norm classically used on cones for functions supported on $\{t<0\}$.

Before we do this, we note, for future reference, some (essentially classical) facts  about $\Delta-X$ when considered as an operator on a bounded portion $\{t\leq t_0\}$ of $C_G$. Let $C^{k,\alpha}_\gamma(t\leq t_0)$ denote the H\"older space of functions on that portion, with norm the analogue of the formula (\ref{eq:15}) restricting to $x,y\in\{t\leq t_0\}$.    Then let   $\hat C^{2,\alpha}_\gamma(t\leq t_0)$ denotes the  set of functions  $f\in C^{2,\alpha}_\gamma(t\leq t_0)$ with (Dirichlet) boundary condition $f|_{t=t_0}=0$.

 \begin{lemma}\label{eq:14}
For each $\gamma\in(0,2n-2)$, the operator
   \[
   \Delta-X : \hat C^{2,\alpha}_\gamma(t\leq t_0) \longrightarrow C^\alpha_{\gamma+2}(t\leq t_0)\] is an isomorphism.
 \end{lemma}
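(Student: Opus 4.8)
The operator $\Delta - X$ on the bounded portion $\{t \leq t_0\}$ of $C_G$ is a second-order uniformly elliptic operator with \emph{no zeroth-order term}, so by the maximum principle it has trivial kernel among functions vanishing on the boundary $\{t = t_0\}$. The plan is therefore to establish the Fredholm property together with index zero, and conclude. Since $\{t \leq t_0\}$ is a compact orbifold with a single isolated conical singularity at the origin and a smooth boundary, this is a standard elliptic problem in weighted Hölder spaces on a manifold with a conical end; the only subtlety is the choice of weight parameter $\gamma$.

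\textbf{Key steps.} First, I would set up the local model near the singular point: after lifting to $\Cb^n \setminus \{0\}$, near the origin the metric $\omega_0$ is asymptotic (in the sense of Lemma \ref{t-neg-infty}, with $a=0$, so $\varphi_0 = e^t - \tfrac{1}{n+1}e^{2t} + \cdots$) to the flat Euclidean cone metric on $\Cb^n/G$, and $X = -4\partial/\partial t = -2r\partial/\partial r$ is the Euler vector field which vanishes to first order at the origin. Hence near the cone point $\Delta - X$ is a lower-order perturbation of the flat Laplacian $\Delta_{\Cb^n/G}$. Second, I would invoke the classical theory for the flat Laplacian on the cone $\Cb^n/G$ (equivalently, on $\Rb^{2n}/G$): this operator, acting between weighted spaces $r^{-\gamma} C^{2,\alpha} \to r^{-\gamma-2}C^\alpha$, is Fredholm precisely when $\gamma$ avoids the \emph{indicial roots}, which for the Laplacian on $\Rb^{2n}$ are the nonnegative integers $0, 1, 2, \dots$ (degrees of harmonic polynomials) and their reflections $2-2n, 1-2n, \dots$ across the critical exponent $\frac{2-2n}{1}$... more precisely the indicial roots are $\{j : j \in \Zb_{\geq 0}\} \cup \{2-2n-j : j \in \Zb_{\geq 0}\}$, and for $G$-invariant functions one keeps only the $G$-invariant harmonics. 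The condition $\gamma \in (0, 2n-2)$ places $\gamma$ strictly between the indicial root $0$ and the indicial root $2n-2$ which is the reflection of $0$ (i.e., $0$ and $-(2-2n)$), so that in this range there are no indicial roots, the operator is Fredholm, and moreover — because $\gamma > 0$ excludes the constants, which are the only obstruction on the kernel side, while $\gamma < 2n-2$ excludes the corresponding cokernel obstruction — the index is zero. Third, I would combine this with interior elliptic regularity and the Schauder theory to deduce that $\Delta - X : \hat C^{2,\alpha}_\gamma(t \leq t_0) \to C^\alpha_{\gamma+2}(t \leq t_0)$ is Fredholm of index zero. Finally, triviality of the kernel follows from the weak maximum principle: if $(\Delta - X)f = 0$ with $f|_{t=t_0} = 0$ and $f = O(r^\gamma)$ with $\gamma > 0$, then $f$ is continuous and vanishes at the origin and on the boundary, so since $\Delta - X$ has no zeroth-order term, $f \equiv 0$. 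Index zero plus trivial kernel gives the isomorphism.

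\textbf{Main obstacle.} The one genuine point requiring care is the precise bookkeeping of indicial roots and the verification that the range $\gamma \in (0, 2n-2)$ is exactly the one for which (a) the operator is Fredholm and (b) the index is zero — i.e., that no indicial root of the flat $G$-invariant Laplacian lies in the open interval $(0, 2n-2)$, and that $0$ and $2n-2$ are indeed the two bracketing roots. The lower endpoint $0$ corresponds to constant functions (always $G$-invariant), and the upper endpoint $2n-2 = (2n - 2)$ is the reflected indicial root $-(2 - 2n)$; between them, the next $G$-invariant harmonic polynomial has degree at least $1 \geq 1 > 0$, and its reflection $2 - 2n - 1 < 0$, so indeed the interval is clear. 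This is classical (see e.g.\ the theory of elliptic operators on conical manifolds, or the corresponding discussion in Joyce's work on ALE spaces, cf.\ Section \ref{joyce-spaces}), hence the lemma's parenthetical "essentially classical." The boundary at $t = t_0$ contributes no difficulty since it is a smooth compact hypersurface and the Dirichlet problem there is standard.
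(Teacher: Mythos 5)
Your overall architecture — Fredholm of index zero from the weighted theory at the cone point, compactness (or smallness) of the first-order term $X$, and injectivity from the maximum principle — is exactly the paper's. But there is a genuine error in the step you yourself single out as the one requiring care: the indicial-root bookkeeping, which you carry out with the weight convention reversed. In this paper $C^{k,\alpha}_\gamma$ is defined by the norm $\sup w_0|f|+\cdots$ with $w_0\sim r^\gamma\to 0$ at the origin, so membership means $|f|\lesssim r^{-\gamma}$: the functions are allowed to \emph{blow up} at rate $r^{-\gamma}$, not required to decay like $r^{+\gamma}$. The relevant range of homogeneities is therefore $\lambda\in(2-2n,0)$, which is the universal gap between the two indicial roots $0$ and $2-2n$ attached to the zero eigenvalue of the link Laplacian — empty of further roots on \emph{any} cone, with no analysis of $G$-invariant harmonics needed. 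That is what the paper means by "the only critical weights in $[0,2n-2]$ are $0$ and $2n-2$." With your decay convention the corresponding claim is false: every degree $d$ of a $G$-invariant harmonic polynomial is an indicial root, and such $d$ can perfectly well lie in $(0,2n-2)$ (e.g.\ $d=2$ for $n\ge 3$ when $G$ admits invariant harmonic quadratics, such as $G=\{\pm I\}\subset SU(n)$ for $n$ even). Your sentence "the next $G$-invariant harmonic polynomial has degree at least $1>0$, so the interval is clear" is a non sequitur — being positive does not place it outside $(0,2n-2)$. Nor is this merely a labelling issue: in the decay convention the lemma itself would be false, since the solution of the Dirichlet problem for generic data is nonzero at the origin and hence cannot lie in a space of functions that vanish there, so surjectivity fails.

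The reversed convention also hides a needed step in the injectivity argument. A kernel element is a priori only $O(r^{-\gamma})$; one must first use the indicial gap to upgrade it to a bounded function with a limit at the origin, then invoke orbifold elliptic regularity to see that it lifts to a smooth solution across the origin on the $G$-prequotient, and only then apply the maximum principle on the (unpunctured) ball with Dirichlet data. Your version assumes from the outset that the function is continuous and vanishes at the origin, which is not available. Once the convention is corrected, the rest of your plan (compactness of $X$ between the weighted spaces, self-duality giving index zero for $\Delta$, Schauder theory at the smooth boundary) goes through as in the paper.
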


 \begin{proof}
   We note that the only critical weights of $\Delta$ at the origin in the interval $[0,2n-2]$ are  $0$ (corresponding to the constants) and $2n-2$ (corresponding to the Green function).  
   This implies that the Laplacian $\Delta$ has index zero as a map between the spaces $\hat C^{2,\alpha}_\gamma(t\leq t_0)$ and $C^\alpha_{\gamma+2}(t\leq t_0)$ (its adjoint is itself with the adjoint weight $\gamma'=2n-2-\gamma\in (0,2n-2)$, so the operator and its adjoint have the same kernel).

Since      $X=-2r\frac \partial{\partial r}$ is a compact operator between the spaces in question, this means that $\Delta-X$ has index zero.

 To show the injectivity of the operator, observe that the calculation of the critical weights implies that a solution of $(\Delta-X)f=0$ with $f\in \hat C^{2,\alpha}_{\gamma}(t\leq t_0)$ and $0<\gamma<2n-2$ must actually be bounded and have a limit at the origin. By local elliptic regularity it must  be smooth (in the orbifold sense of lifting to a smooth function on the $G$-prequotient $B=B_0(\exp(t_0/2))\subseteq\Cb^n$). So we have a solution of $(\Delta-X)f=0$ in the space $C^{2,\alpha}(B)$, with Dirichlet boundary conditions, so $f=0$.

 Therefore the operator is an isomorphism.
\end{proof}

 We can now give the doubly-weighted isomorphism.
 
\begin{theorem}[Drift Laplacian on finite quotients of the Cao soliton]\label{th:isoCao}
  Let $0<\delta<1$ and $0<\gamma<2n-2$.  Then the operator \[\Delta-X:C^{2,\alpha}_{\gamma,\delta}(C_G)\rightarrow C^\alpha_{\gamma+2,\delta}(C_G)\]
   is an isomorphism.
\end{theorem}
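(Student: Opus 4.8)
The plan is to run the exhaustion argument used for Theorem~\ref{th:analysis-cao}, now on $C_G$ and with the extra weight at the origin supplied by Lemma~\ref{eq:14}, the two new ingredients being a global barrier adapted to both ends and a doubly-weighted Schauder estimate. (One could instead splice the inverses from Theorem~\ref{th:analysis-cao} and Lemma~\ref{eq:14} into a parametrix and invoke Fredholm theory, but then surjectivity would require an index computation, so I prefer the direct route.) Fix $g\in C^\alpha_{\gamma+2,\delta}(C_G)$, exhaust $C_G$ by the sublevel sets $\{t\le t_i\}$, $t_i\to\infty$, and for each $i$ let $f_i\in\hat C^{2,\alpha}_\gamma(t\le t_i)$ be the unique solution of $(\Delta-X)f_i=g$, $f_i|_{\{t=t_i\}}=0$ furnished by Lemma~\ref{eq:14} (legitimate since $0<\gamma<2n-2$, and on a bounded region $w_{0;\gamma,\delta}$ is comparable to $r^\gamma$, so $g$ restricts into $C^\alpha_{\gamma+2}(t\le t_i)$). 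Because $0<\gamma<2n-2$ lies strictly between the only two indicial roots $0$ and $2n-2$ of $\Delta$ at the origin, the argument in the proof of Lemma~\ref{eq:14} shows each $f_i$ is in fact bounded near the origin and extends there as an orbifold-smooth function; thus in all that follows the origin is an ordinary interior point.

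\textbf{Uniform $C^0$ bound.} I would construct a positive function $\Xi$ on $C_G$ with $\Xi\asymp w_{0;\gamma,\delta}^{-1}$ --- so $\Xi\sim r^{-\gamma}$ near the origin, $\Xi\sim e^{-\delta\varphi}$ at infinity --- and $(\Delta-X)\Xi\le-c\,w_{0;\gamma+2,\delta}^{-1}$ globally, for some $c>0$. At infinity take $\Xi=e^{-\delta\varphi}$: the computation preceding Theorem~\ref{th:analysis-cao} gives $(\Delta-X)e^{-\delta\varphi}=-4\delta(n-\delta\varphi_t)e^{-\delta\varphi}\le-4\delta(1-\delta)n\,e^{-\delta\varphi}$ using (\ref{eq:6}) and $0<\delta<1$. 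Near the origin take $\Xi=r^{-\gamma}$: since by Lemma~\ref{t-neg-infty} the Cao metric agrees with the Euclidean metric on $\Cb^n$ up to $O(r^2)$ relative corrections, one gets $(\Delta-X)r^{-\gamma}=\gamma(\gamma-2n+2)\,r^{-\gamma-2}+O(r^{-\gamma})$, whose leading term is negative since $0<\gamma<2n-2$, the Euler-field term $Xr^{-\gamma}=2\gamma r^{-\gamma}$ and the curvature corrections being of lower order. A routine splicing of these two local supersolutions over the compact transition region (absorbing the cut-off errors, if necessary with a bounded corrector produced by Lemma~\ref{eq:14}) yields the global $\Xi$. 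Applying the maximum principle to $f_i\mp C\|g\|_{C^0_{\gamma+2,\delta}}\Xi$ on $\{t\le t_i\}$ --- whose topological boundary is only $\{t=t_i\}$, the origin being handled by $\Xi\to\infty$ there --- gives $w_{0;\gamma,\delta}|f_i|\le C\|g\|_{C^0_{\gamma+2,\delta}}$, with $C$ independent of $i$.

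\textbf{Schauder estimate, limit, and injectivity.} For fixed $t_0$ and all $i$ with $t_i\ge t_0+1$ I claim
\[
\|f_i\|_{C^{2,\alpha}_{\gamma,\delta}(\{t\le t_0\})}\le C\big(\|f_i\|_{C^0_{\gamma,\delta}(\{t\le t_i\})}+\|(\Delta-X)f_i\|_{C^\alpha_{\gamma+2,\delta}(\{t\le t_i\})}\big)\le C\|g\|_{C^\alpha_{\gamma+2,\delta}},
\]
with $C$ independent of $t_0,i$. On $\{t\ge0\}$ this is Proposition~\ref{schauder-cao} applied to $L=\Delta-X$ (here $a=g^{-1}\ge g^{-1}$, and $b$, the coefficient vector field of $-X=4\partial/\partial t$, has finite $C^\alpha$ norm by Lemma~\ref{lem:bounded-curvature}, while $\sigma$ is bounded). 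On $\{t\le0\}$ one uses the interior Schauder estimate on the balls $B_x(\sigma(x))$, $\sigma(x)\sim r$: after rescaling by $\sigma(x)^{-1}$ these balls carry uniformly controlled metrics (the Cao metric being Euclidean up to $O(r^2)$) and the operators $\sigma(x)^2(\Delta-X)$ are uniformly elliptic with uniformly bounded coefficients (the rescaled $X$ being essentially the scale-invariant Euler field), so a uniform Euclidean Schauder estimate holds; since $w_{0;\gamma,\delta}$ is comparable to a constant on each such ball, combining these estimates with the scaling weights $\sigma^i$ of (\ref{eq:15}) gives the weighted bound near the origin, which matches Proposition~\ref{schauder-cao} over the overlap. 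A diagonal argument extracts $f_i\to f$ in $C^2_{\mathrm{loc}}$; passing the estimates to the limit gives $(\Delta-X)f=g$ and $\|f\|_{C^{2,\alpha}_{\gamma,\delta}(C_G)}\le C\|g\|_{C^\alpha_{\gamma+2,\delta}}$, so $\Delta-X$ is surjective with bounded inverse. For injectivity, let $(\Delta-X)f=0$ with $f\in C^{2,\alpha}_{\gamma,\delta}(C_G)$: by the indicial argument $f$ is bounded near the origin (extending orbifold-smoothly across it), and $f\to0$ at infinity since $\delta>0$; hence $\{|f|\ge\epsilon\}$ is compact for each $\epsilon>0$, so if $\sup|f|>0$ it is attained at an interior point, and the strong maximum principle ($\Delta-X$ has no zeroth-order term and $C_G$ is connected) forces $f$ to be a nonzero constant, contradicting $f\to0$ at infinity. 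Thus $f\equiv0$ and $\Delta-X$ is an isomorphism.

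\textbf{Main obstacle.} The delicate point is the construction of the global barrier $\Xi$: each local model is a strict supersolution only in its own regime, and splicing them over the transition region while keeping $(\Delta-X)\Xi$ strictly negative and of the exact size $w_{0;\gamma+2,\delta}^{-1}$ needs care with the cut-offs and perhaps an auxiliary corrector from Lemma~\ref{eq:14}; one must also check that the $O(r^2)$ deviation of the Cao metric from the flat metric, together with the first-order term $X$, is genuinely negligible near the origin at the level of the barrier inequality. Once $\Xi$ is in hand, the rescaled Schauder estimates near the cone point, the extraction of the limit, and the maximum-principle arguments are all routine.
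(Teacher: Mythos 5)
Your route is genuinely different from the paper's: you run the exhaustion-plus-barrier scheme of Theorem \ref{th:analysis-cao} globally on $C_G$, whereas the paper never needs a global barrier or an exhaustion here. It simply splices the two inverses it already has: Lemma \ref{eq:14} produces $f_0$ on $\{t\le 0\}$ with $(\Delta-X)f_0=g$, a cutoff $\chi$ supported in $\{t\le 0\}$ makes $g-(\Delta-X)(\chi f_0)$ supported away from the origin, Theorem \ref{th:analysis-cao} solves for that remainder in $C^{2,\alpha}_\delta\subseteq C^{2,\alpha}_{\gamma,\delta}$, and $f=\chi f_0+f_1$; injectivity is by the same removable-singularity reduction to Theorem \ref{th:analysis-cao}. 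The paper's argument is shorter and avoids the transition-region barrier you flag as your main obstacle (which, incidentally, is routine: $\Xi=Ae^{-\delta\varphi}+\chi(t)\,r^{-\gamma}$ with $A$ large absorbs the cutoff errors because $e^{-\delta\varphi}$ is a uniformly strict supersolution on the compact transition annulus).

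However, there is a genuine gap elsewhere, at the step you treat as unproblematic. You assert that each Dirichlet solution $f_i$ is \emph{bounded} near the origin "by the argument in the proof of Lemma \ref{eq:14}," and you use this to dismiss the origin in the maximum principle ("handled by $\Xi\to\infty$ there"). That argument applies only to the \emph{homogeneous} equation: a solution of $(\Delta-X)f=0$ that is $O(r^{-\gamma})$ with $0<\gamma<2n-2$ has no indicial component and extends across the origin. Your $f_i$ solves $(\Delta-X)f_i=g$ with $g$ allowed to be of size $r^{-\gamma-2}$ near the origin (that is what $C^\alpha_{\gamma+2,\delta}$ permits), and then $f_i$ is genuinely of size $r^{-\gamma}$ there — e.g.\ for $g=r^{-\gamma-2}$ near $0$ the particular solution carries a nonzero multiple of $r^{-\gamma}$, since $-\gamma$ is not an indicial root. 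Consequently $f_i$ and $\Xi$ are of the \emph{same} order $r^{-\gamma}$ at the origin, and $f_i-C\|g\|\Xi$ need not tend to $-\infty$ there; the comparison as written does not close (knowing only that $\|f_i\|_{C^0_\gamma}$ is finite, with an $i$-dependent constant, is exactly what you are trying to improve). The fix is standard: for $0<\eta<2n-2-\gamma$ add to the barrier a small multiple $\eta'\,\Xi_\eta$ of a supersolution behaving like $r^{-\gamma-\eta}$ at the origin (again correcting cutoff errors with $Ae^{-\delta\varphi}$); then $f_i-C\|g\|\Xi-\eta'\Xi_\eta\to-\infty$ at the origin since $r^{-\gamma}=o(r^{-\gamma-\eta})$, the maximum principle applies, and one lets $\eta'\to0$. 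With that repair (and with the observation that your injectivity argument is unaffected, since there the equation really is homogeneous), your proof goes through.
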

\begin{proof}
  This theorem will be a consequence of Theorem \ref{th:analysis-cao} (which deals with the analysis at infinity) and of the local theory for polynomial weights in cone points (as in Lemma \ref{eq:14}).

 To show the injectivity of the operator, we follow the same argument as in Lemma \ref{eq:14}.  A solution of $(\Delta-X)f=0$ with $f\in C^{2,\alpha}_{\gamma,\delta}$ and $0<\gamma<2n-2$ must actually be bounded and have a limit at the origin, and so lift to a solution of $(\Delta-X)f=0$ in the space $C^{2,\alpha}_\delta$. We then find $f=0$ by Theorem \ref{th:analysis-cao}.

To show the surjectivity of the operator, take $g\in C^\alpha_{\gamma+2}(C_G)$. By Lemma \ref{eq:14} we find $f_0\in C^{2,\alpha}_\gamma(\{t\leq0\})$ defined for $t\leq 0$, such that $(\Delta-X)f_0=g$. Take $\chi$ a cutoff function so that $\chi(t)=0$ for $t\geq0$ and $\chi(t)=1$ for $t\leq-1$. Then, using the isomorphism of Theorem \ref{th:analysis-cao}, and the fact that $g - (\Delta-X)(\chi f_0)$ has support included in $\{t\geq-1\}$, we can find $f_1\in C^{2,\alpha}_\delta(C_G)\subseteq C^{2,\alpha}_{\gamma,\delta}(C_G)$ such that
 \[ (\Delta-X)f_1 = g - (\Delta-X)(\chi f_0). \]
 Then $f=\chi f_0 + f_1\in  C^{2,\alpha}_{\gamma,\delta}(C_G)$ is the required solution to $(\Delta-X)f=g$.
\end{proof}

We also note the appropriate Schauder estimate on these doubly-weighted spaces:
\begin{proposition}[Schauder estimate on the Cao soliton quotient]\label{schauder-cao-quotient}
  There exists a constant $C$, such that
    for all $t_0\leq\infty$, for all $u$ on $\{t\leq t_0\}$, with also $u|_{\{t=t_0\}}=0$ if $t_0<\infty$,
  \[
  \|u\|_{C^{2,\alpha}_{\gamma,\delta}(\{t< t_0\})}
  \leq C\left[\|u\|_{C^{0}_{\gamma,\delta}(\{t< t_0\})}+\|(\Delta-X)u\|_{C^{\alpha}_{\gamma+2,\delta}(\{t< t_0\})}\right].
  \]
\end{proposition}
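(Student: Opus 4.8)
The plan is to exploit the fact that the weighted norm $C^{k,\alpha}_{\gamma,\delta}$ has two distinct regimes which are glued along a fixed compact region: on $\{t\geq -1\}$ one has $\sigma\asymp 1$ and $w_0=e^{\delta\varphi}$, so the restriction of this norm agrees up to a uniform constant with the Cao-asymptotic norm $C^{k,\alpha}_\delta$ of Section \ref{cao-spaces}; on $\{t\leq 1\}$ one has $\sigma\asymp r$ and $w_0\asymp r^\gamma$, so it agrees with the weight-$r^\gamma$ Hölder norm classically used near a conical singularity; and on the fixed overlap $\{-1\leq t\leq 1\}$ both weights are pinched between positive constants. So I would fix a cutoff $\chi=\chi(t)$ with $\chi\equiv 1$ on $\{t\leq 0\}$ and $\chi\equiv 0$ on $\{t\geq 1\}$, prove the estimate for $(1-\chi)u$ using the analysis at the Cao end and for $\chi u$ using the conical analysis at the orbifold point, and add. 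The only coupling is through the commutators $[\Delta-X,\chi]u$ and $[\Delta-X,1-\chi]u$, supported in the fixed compact set $\{0\leq t\leq 1\}$ and involving only $u$ and $du$ there; a single interior Schauder estimate on the fixed compact region $\{-1\leq t\leq 2\}$ bounds these by the right-hand side of the claimed inequality. (If $t_0\leq 2$ the whole statement is such a compact estimate, so I would assume $t_0$ large in what follows.)

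For the outer piece $(1-\chi)u$, supported in $\{0\leq t\leq t_0\}$: extended by zero across the orbifold point it is an admissible input for Proposition \ref{schauder-cao}, applied to $L=\Delta-X$, for which $a=g^{-1}$ satisfies $a\geq g^{-1}$ and has finite $C^\alpha$ norm and $b=-X$ has finite $C^\alpha$ norm, both by Lemma \ref{lem:bounded-curvature}. This gives the weighted estimate on $\{t\leq t_0-1\}$ in terms of data on $\{t\leq t_0\}$. For $t_0=\infty$ this already suffices, exhausting by $\{t\leq k\}$ and using that the Schauder constant is independent of $k$. For $t_0<\infty$ there remains the collar $\{t_0-1\leq t\leq t_0\}$: there $u$ vanishes on $\{t=t_0\}$, the local geometry is uniformly controlled (Lemma \ref{lem:bounded-curvature}), and $w_0\asymp w_0(t_0)$ since $0<\varphi_t<n$ by \eqref{eq:6}; so the standard boundary Schauder estimate (see, e.g., \cite{GT}), in the uniform harmonic/boundary coordinates used in the proof of Proposition \ref{schauder-cao}, gives $\|u\|_{C^{2,\alpha}}\leq C(\|u\|_{C^0}+\|Lu\|_{C^\alpha})$ on that collar with $C$ independent of $t_0$, and multiplying through by $w_0(t_0)$ produces the weighted inequality there.

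For the inner piece $\chi u$, supported in $\{t\leq\min(t_0,1)\}$: by Lemma \ref{t-neg-infty} (case $a=0$) the potential $\varphi_0$ is a smooth function of $r^2$, so $(C_G,\omega_0)$ is asymptotic near the origin to the flat cone $\Cb^n/G$ with rate $O(r^2)$. I would run the standard rescaling argument: on each dyadic annulus $A_R=\{R/2\leq r\leq 2R\}$ put $\tilde u(y)=u(Ry)$ and $\tilde f(y)=(Lu)(Ry)$; then $(\Delta_{g_R}-R^2\tilde X)\tilde u=R^2\tilde f$ on the fixed annulus $\{1/2\leq|y|\leq 2\}$, where $g_R:=R^{-2}g(Ry)$ converges smoothly to the flat metric as $R\to 0$, $\tilde X=-2|y|\,\partial/\partial|y|$ is the scale-invariant radial field, and $R^2\tilde X\to 0$. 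Hence the rescaled operator is uniformly elliptic with uniformly controlled coefficients, so the standard interior Schauder estimate on this fixed annulus — or its boundary version with zero Dirichlet data, when $A_R$ is the outermost annulus and $2R\asymp e^{t_0/2}$ — applies with a constant uniform in $R$. Undoing the rescaling, the factors $\sigma^i\asymp R^i$ and $\sigma^{2+\alpha}\asymp R^{2+\alpha}$ in \eqref{eq:15} and the weight $w_0\asymp R^\gamma$ are exactly scale-matched, the Laplacian produces the shift $\gamma\mapsto\gamma+2$ on the right, and summing over the boundedly-overlapping dilated annuli $\{R/4\leq r\leq 4R\}$ yields the conical estimate on the inner region. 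Adding the inner and outer estimates and absorbing the commutator terms completes the proof.

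The one point requiring genuine care is the uniformity of all constants in $t_0$, and in particular the behaviour at the boundary $\{t=t_0\}$: this is exactly why the Dirichlet hypothesis is imposed, and why the boundary must be treated via the uniform-geometry collar in the Cao regime and the rescaled-annulus boundary estimate in the conical regime, rather than by quoting a single global boundary Schauder estimate whose constant would a priori depend on $t_0$. Everything else is bookkeeping to check that the two families of weighted spaces glue compatibly across the fixed transition region, which is immediate from the definition of $w_0$ and $\sigma$.
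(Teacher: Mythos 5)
Your proof is correct, and at bottom it runs on the same fuel as the paper's: uniform local Schauder estimates at the scale $\sigma(x)$ (comparable to $r$ near the orbifold point, to a constant in the Cao regime), combined with the fact that $w_{0}$ and $\sigma$ vary by bounded factors over balls $B_x(\sigma(x))$, with the weight shift $\gamma\mapsto\gamma+2$ accounted for by the factor $\sigma^2$. The packaging differs, though: the paper proves the estimate in one stroke by covering the whole domain with balls $B_x(\sigma(x))$ carrying uniformly controlled harmonic coordinates, applying the (interior or boundary) Euclidean weighted Schauder estimate on each, and patching directly -- no cutoff and hence no commutator terms -- exactly as in Proposition \ref{schauder-cao} but with the varying radius $\sigma(x)$. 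Your version instead separates the two regimes with a cutoff, invokes Proposition \ref{schauder-cao} as a black box on the Cao end, and rebuilds the uniform local estimates near the cone point by dyadic rescaling to a fixed annulus; this makes the scale-matching of $\sigma^i$, $w_0\asymp R^\gamma$ and the $R^2$ from the Laplacian completely explicit, at the cost of having to absorb the commutator $[\Delta-X,\chi]u$ on the fixed transition region (which you do correctly via an interior estimate there). One small imprecision: for $t_0\leq 2$ the statement is \emph{not} a single compact unweighted estimate, since the weight $r^\gamma$ still degenerates at the origin; but this costs you nothing, because your dyadic inner argument (with the Dirichlet boundary version on the outermost annulus) already covers that case, so you should simply drop the parenthetical rather than appeal to compactness.
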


\begin{proof}
  The proof is similar to that of Proposition \ref{schauder-cao}.  In this situation we have that there exists $Q>0$ such that for all $x\in M$, there exist (harmonic) co-ordinates on the ball $B_x(\sigma(x))$ in which the metric tensor $g$ is $C^{1,\alpha}$ controlled by $Q$:  the varying radius $\sigma(x)$ (defined in (\ref{sigma-def})) is needed because the injectivity radius is small near the orbifold point at the origin.

Therefore on balls of radii $\sigma(x)$, the \emph{weighted} Schauder norms with respect to the metric $g$ are uniformly equivalent to the \emph{weighted} Euclidean Schauder norms.
So the weighted Euclidean Schauder \emph{boundary} estimate \cite[Lemma 6.4]{GT} also holds, with uniform constant, on balls $B_x(\sigma(x))$ with weighted Riemannian Schauder norm.

Again, the weight functions $w_{\gamma,\delta}$ have the property that, for some uniform constant $C$, for all $x$ and all $y\in B_x(\sigma(x))$,
$C^{-1}w(x)\leq w(y)\leq C w(x)$.
  We may therefore combine the uniform Schauder estimates on balls $B_x(\sigma(x))$ to give the desired global weighted estimate; the weight $w_{\gamma+2,\delta}$ appears as the product $\sigma^2w_{\gamma,\delta}$.
\end{proof}

\section{Joyce metrics on crepant resolutions} \label{joyce-spaces}

We continue to fix a finite subgroup $G$ of $SU(n)$ acting freely on $\Cb^n\setminus\{0\}$, as in Section \ref{sec:analys-finite-quot}.  For the rest of this article we also fix an equivariant crepant resolution $\pi:J_G\to \Cb^n/G$ of the quotient. In this context, \emph{equivariant} means that the $\Cb^*$ action on $\Cb^n/G$ extends to an action on $J_G$.  Not all such quotients $\Cb^n/G$ admit equivariant crepant resolutions, or even crepant resolutions.  We note some examples which do:

\begin{example}
  In dimension $n=2$, finite subgroups $G\subseteq SU(2)$ which act freely are classified by Dynkin diagrams, falling into two infinite classes $A_k$, $D_k$ together with some exceptional examples.  The associated orbifolds $\Cb^2/G$ are the \emph{Kleinian} or \emph{Du Val singularities}, and there is a unique \emph{minimal} resolution  of each such singularity, which is equivariant and crepant.
\end{example}

\begin{example}
In dimension $n=3$, there are some ten infinite classes of such subgroups $G$. There exists a canonical equivariant crepant resolution of $\Cb^3/G$, the \emph{$G$-orbit Hilbert scheme} of $\mathbb{C}^3$, whose existence was  conjectured  by Nakamura \cite{Nak01} and proved by Bridgeland-King-Reid \cite{BKR01}. There may also exist other crepant resolutions (perhaps not equvariant).
\end{example}

\begin{example}
Toric geometry \cite{DHZ01,CR04} provides numerous examples, in arbitrary dimension, of orbifolds with toric, and therefore equivariant, crepant resolutions.
\end{example}

The equivariance property means that the radial vector field $X=-2R\frac\partial{\partial R}$ on $\Cb^n/G$ extends to $J_G$, and we will continue to denote this extension by $X$; the vector field $JX$ generates the action of $S^1\subset\Cb^*$.

We will use the notation $R$ for the radius function on $J_G$, to distinguish it from the homothetic radius $r$ on the Cao soliton.

As previously for the Cao soliton, we introduce Hölder weighted spaces adapted to the geometry of the manifolds $J_G$. Consider a positive locally bounded weight $w^-_\gamma$, which for $R\geq 2$ satisfies
\[ w^-_\gamma =  R^\gamma. \]
If there is no ambiguity, we will denote $w^-_\gamma$ by $w^-$.

Take any reference metric on $J_G$ which coincides with the flat metric of $\Cb^n/G$ near infinity. There is a positive function $\sigma$ such that at each point $\sigma(x)$ is smaller than the injectivity radius at $x$, and near infinity $\sigma(x)=cst. \, R$. So $w^-_\gamma$ and $\sigma^\gamma$ are comparable ($w^-_\gamma/\sigma^\gamma$ and $\sigma^\gamma/w^-_\gamma$ are bounded). Then the analogue of formula (\ref{eq:15}), with this reference metric, this $\sigma$ and this weight $w^-_\gamma$, defines a weighted Hölder space $C^{k,\alpha}_\gamma(J_G)$.

We recall the Ricci-flat asymptotically locally Euclidean (ALE) Kähler metrics on $J_G$, implicit in \cite{TY91} and constructed directly in  \cite{Joy00}. These metrics are asymptotic to the flat metric on $\Cb^n/G$, up to a term in the weighted space $C^\infty_{2n}(J_G)$. 
\begin{theorem}[Joyce]
  Let $\mathfrak{k}\in H^2(J_G,\Rb)$ be a Kähler class on $J_G$ containing an ALE Kähler metric. Then $\mathfrak{k}$ contains a unique ALE Ricci-flat metric $\omega^-$. Moreover, near infinity one has $\omega^-=\frac i2\ddb \Phi^-$, with
  \[ \Phi^- = R^2 + A R^{2-2n} + \psi, \quad \psi\in C^\infty_\gamma(J_G) \text{ for all }\gamma<2n-1. \]
Finally, if $J_G$ is an equivariant resolution, the metric $\omega^-$ is $S^1$-invariant.
\end{theorem}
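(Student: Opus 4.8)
The plan is to reduce the existence statement to a complex Monge--Ampère equation on $J_G$ and to solve it by a continuity method in the weighted Hölder spaces $C^{k,\alpha}_\gamma(J_G)$ introduced above, in the spirit of Yau's solution of the Calabi conjecture adapted to the ALE setting. Since $G\subseteq SU(n)$, the form $dz^1\wedge\cdots\wedge dz^n$ is $G$-invariant, descends to $\Cb^n/G$, and, the resolution being crepant, extends to a nowhere-vanishing holomorphic $(n,0)$-form $\Omega$ on $J_G$; then $\nu:=i^{n^2}\,\Omega\wedge\overline\Omega$ is a flat volume form. Fix an ALE Kähler metric $\omega$ in $\mathfrak{k}$ and write $\omega^n=e^{F}\nu$; since $\omega$ and the flat metric on $\Cb^n/G$ are ALE and mutually asymptotic (and after modifying $\omega$ within $\mathfrak{k}$ near infinity if necessary), we may take $F\in C^\infty_{2n}(J_G)$. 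A Kähler form $\omega^-=\omega+\tfrac{i}{2}\ddb u$ in $\mathfrak{k}$ is Ricci-flat precisely when $(\omega^-)^n$ is a constant multiple of $\nu$, and after rescaling $\Omega$ this reads
\[
\big(\omega+\tfrac{i}{2}\ddb u\big)^n=e^{-F}\,\omega^n .
\]

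I would solve this along the path $\big(\omega+\tfrac{i}{2}\ddb u_s\big)^n=e^{-sF}\omega^n$, $s\in[0,1]$, seeking $u_s\in C^{2,\alpha}_\gamma(J_G)$ for a fixed $\gamma\in(0,2n-2)$ (so that the right-hand side $-sF\in C^\alpha_{2n}\subseteq C^\alpha_{\gamma+2}$). Openness at $u_s$ follows from the implicit function theorem once one knows that the linearization $\tfrac{1}{2}\Delta_{\omega_s}:C^{2,\alpha}_\gamma(J_G)\to C^\alpha_{\gamma+2}(J_G)$ is an isomorphism; this is the standard Fredholm theory for the Laplacian on an ALE manifold, whose indicial (critical) rates on the end $\Cb^n/G$ are $0$, $2n-2$, $2n-1,\dots$, so the map is an isomorphism for every $\gamma\in(0,2n-2)$ --- exactly parallel to the cone-point discussion in Lemma~\ref{eq:14}. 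Closedness requires a priori estimates uniform in $s$: the $C^0$ bound comes from Moser iteration applied to $u_s$, using the ALE Sobolev inequality and the decay of $F$ --- here some genuinely noncompact input is needed, as there is no fixed total volume to exploit --- while the $C^2$ and $C^{2,\alpha}$ bounds are Yau's second-order and Schauder estimates, which are local and transfer to $J_G$ by patching on balls of controlled radius using the uniform geometry of the ALE end, just as in the proof of Proposition~\ref{schauder-cao}.

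Uniqueness is a maximum-principle argument: if $\omega+\tfrac{i}{2}\ddb u_1$ and $\omega+\tfrac{i}{2}\ddb u_2$ are both ALE Ricci-flat in $\mathfrak{k}$, then $v=u_1-u_2$ satisfies a homogeneous linear elliptic equation $\langle a,\nabla^2 v\rangle=0$ with $a$ positive-definite and $v\to0$ at infinity, hence $v\equiv0$. For the asymptotic expansion one first bootstraps the weighted elliptic estimates to get $u\in C^\infty_\gamma(J_G)$ for every $\gamma<2n-2$; then, writing the equation in the flat coordinate near infinity, $\Delta_{\mathrm{flat}}u$ equals $O(R^{-2n})$ plus faster-decaying and quadratically small terms, and inverting the Laplacian shows $u=A\,R^{2-2n}+\psi$ with $\psi\in C^\infty_\gamma$ for all $\gamma<2n-1$, since $R^{2-2n}$ is the slowest decay of a nonconstant homogeneous harmonic function on the end; adding the flat potential $R^2$ of $\omega$ at infinity yields $\Phi^-=R^2+A\,R^{2-2n}+\psi$, the coefficient $A$ (the ADM-type mass) being determined by the equation. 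Finally, if $J_G$ is an equivariant resolution, $\Cb^*$ acts on $J_G$, trivially on $H^2(J_G,\Rb)$ by connectedness, and preserving the ALE structure at infinity; for $g\in S^1$ one has $g^*\nu=\nu$ because $\Omega$ transforms under $S^1$ by a unit scalar, so $g^*\omega^-$ is again an ALE Ricci-flat Kähler metric in $\mathfrak{k}$, and uniqueness forces $g^*\omega^-=\omega^-$.

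The main obstacle is the a priori control in the noncompact regime --- concretely the $C^0$ estimate, which must replace the compact proof's use of a fixed total volume by Moser iteration built on the ALE Sobolev inequality and the decay of $F$ --- together with the care needed to extract the sharp asymptotics: one must check that, for a suitably chosen reference metric, the right-hand side carries no resonant $R^{-2n}$ term, so that inverting the Laplacian produces a clean $R^{2-2n}$ correction with no logarithmic contribution.
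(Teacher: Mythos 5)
The paper does not reprove this theorem: it cites Tian--Yau and Joyce for the existence, uniqueness and asymptotic expansion, and only argues the final $S^1$-invariance statement, which it deduces from uniqueness exactly as you do. Your sketch of the remaining parts follows the standard Joyce/Tian--Yau continuity-method proof in weighted H\"older spaces, so your approach is essentially the same as the cited source's; the two points you flag (the noncompact $C^0$ estimate via Moser iteration with the ALE Sobolev inequality, and ruling out a logarithmic contribution at the resonant rate $2n-2$) are indeed where the real work lies.
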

The last statement follows from the uniqueness, since the $S^1$ action preserves the ALE behaviour at infinity.

Standard analysis on weighted spaces gives immediately:
\begin{proposition}\label{invert-joyce}
  For any $\gamma\in(0,2n-2)$, the Laplacian is an isomorphism
  \[\Delta:C^{2,\alpha}_\gamma(J_G)\rightarrow C^\alpha_{\gamma+2}(J_G).\]
\end{proposition}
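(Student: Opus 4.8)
The plan is to run the standard Fredholm theory for the Laplacian on an asymptotically conical manifold, here applied to the ALE space $(J_G,\omega^-)$, whose single end is modelled on the flat cone $\Cb^n/G=\Rb^{2n}/G$ with decay rate $2n$. First I would recall that $\Delta:C^{2,\alpha}_\gamma(J_G)\to C^\alpha_{\gamma+2}(J_G)$ is Fredholm exactly when $\gamma$ is not a critical (indicial) weight of the model Laplacian at infinity, i.e.\ not the degree of a $G$-invariant homogeneous harmonic function on $\Rb^{2n}$. Those degrees form the integer set $\{0,1,2,\dots\}$ (degrees of harmonic polynomials, of which only the constants survive the $G$-action, since $G$ acts freely on $\Cb^n\setminus\{0\}$) together with the degrees $\{2n-2,2n-1,\dots\}$ of their Kelvin transforms. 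Hence the open interval $(0,2n-2)$ contains no critical weight, so $\Delta_\gamma$ is Fredholm throughout that interval, with index independent of $\gamma$. This is the same computation of critical weights already performed in the proof of Lemma~\ref{eq:14}, now read at infinity rather than at the cone point.

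Next I would identify this index. The formal $L^2$-adjoint of $\Delta:C^{2,\alpha}_\gamma\to C^\alpha_{\gamma+2}$ is again $\Delta$, acting with the dual weight $\gamma'=2n-2-\gamma$, so $\operatorname{coker}(\Delta_\gamma)\cong\ker(\Delta_{\gamma'})$. Since $\gamma\in(0,2n-2)$ forces $\gamma'\in(0,2n-2)$ as well, both weights lie in the same gap between critical weights, whence $\operatorname{ind}(\Delta_\gamma)=\operatorname{ind}(\Delta_{\gamma'})$; combined with $\operatorname{ind}(\Delta_\gamma)=\dim\ker(\Delta_\gamma)-\dim\ker(\Delta_{\gamma'})$ this forces $\operatorname{ind}(\Delta_\gamma)=0$. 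One could instead quote the relative index theorem and compare with $\Rb^{2n}$, where the analogous index vanishes over the same range.

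It then remains only to prove injectivity, which together with index zero yields the isomorphism. If $\Delta f=0$ with $f\in C^{2,\alpha}_\gamma(J_G)$ and $\gamma>0$, then $f\to0$ at infinity; hence if $\sup_{J_G}f>0$ it is attained at an interior point, and the strong maximum principle forces $f$ to equal the positive constant $\sup f$, contradicting the decay---and symmetrically for $\inf f$---so $f\equiv0$. An index-zero Fredholm operator with trivial kernel is an isomorphism. Alternatively, surjectivity can be obtained directly by an exhaustion argument in the style of Theorem~\ref{th:analysis-cao}: one solves the Dirichlet problem on $\{R\le R_i\}$ and passes to the limit, using that $R^{-\gamma}$ is a positive supersolution near infinity, since $\Delta R^{-\gamma}=-\gamma(2n-2-\gamma)R^{-\gamma-2}<0$ for $0<\gamma<2n-2$, to get the uniform weighted bounds.

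I expect no genuine obstacle here: the only point needing care is the bookkeeping of critical and dual weights for the cone $\Rb^{2n}/G$, together with the verification that they avoid the open interval $(0,2n-2)$---which is precisely the ingredient that makes the statement ``immediate''. Everything else is the routine maximum-principle-plus-Fredholm-duality package, carried out exactly as in Lemma~\ref{eq:14} and Theorem~\ref{th:analysis-cao}.
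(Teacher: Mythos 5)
Your overall strategy---Fredholmness of $\Delta$ away from the indicial weights of the flat cone $\Cb^n/G$ at infinity, index zero because the weight interval $(0,2n-2)$ is preserved by the duality $\gamma\mapsto 2n-2-\gamma$, and injectivity from the maximum principle (or, alternatively, direct surjectivity by exhaustion against a supersolution)---is precisely the ``standard analysis on weighted spaces'' that the paper invokes here without proof, and it mirrors the paper's own treatment of the cone point in Lemma~\ref{eq:14}. The one step you must repair is your identification of the critical weights. With the paper's convention $\|f\|_{C^0_\gamma}=\sup R^\gamma|f|$, a harmonic function homogeneous of degree $k$ sits at weight $\gamma=-k$, so the harmonic polynomials contribute the critical weights $\{0,-1,-2,\dots\}$, not $\{0,1,2,\dots\}$, while their Kelvin transforms (homogeneous of degree $2-2n-k$) contribute $\{2n-2,2n-1,\dots\}$; the gap $(0,2n-2)$ is then free of critical weights with no input from $G$ whatsoever. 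Your patch---that only the constants among harmonic polynomials are $G$-invariant because $G$ acts freely on $\Cb^n\setminus\{0\}$---is false in general: for $G=\Zb_3\subset SU(3)$ acting by a primitive cube root of unity, $|z_1|^2-|z_2|^2$ is a $G$-invariant harmonic polynomial of degree $2\in(0,2n-2)$. As written, your argument genuinely needs that false claim to exclude the weights $1,\dots,2n-3$ from the interval; once the sign convention is straightened out the claim becomes unnecessary, and the rest of your argument (duality giving index zero, the maximum principle giving injectivity, or the barrier $\Delta R^{-\gamma}=-\gamma(2n-2-\gamma)R^{-\gamma-2}<0$ giving the a priori bounds for the exhaustion) goes through verbatim.
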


\begin{proposition}[Schauder estimate on the Joyce manifold]\label{schauder-joyce}
  There exists a constant $C$, such that
     for all $u$ on $J_G$,
  \[
  \|u\|_{C^{2,\alpha}_{\gamma}(J_G)}
  \leq C\left[\|u\|_{C^{0}_{\gamma}(J_G)}+\|\Delta u\|_{C^{\alpha}_{\gamma+2}(J_G)}\right].
  \]
\end{proposition}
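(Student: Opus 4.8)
The plan is to mimic the proof of Proposition \ref{schauder-cao} (the Cao-asymptotics Schauder estimate), but now working with the polynomial weight $w^-_\gamma$ appropriate to the ALE geometry of $J_G$, and using the varying injectivity radius $\sigma(x)$ in place of a fixed radius $s$. The point is that $J_G$ is complete with bounded geometry away from the compact core, and near infinity it looks like $\Cb^n/G$ with the flat metric (up to a term in $C^\infty_{2n}$), so it has curvature and all covariant derivatives bounded; the only subtlety is the blowup of the metric near the exceptional set, which however is on a \emph{compact} region and therefore harmless.

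First I would record the geometric input: since the reference metric on $J_G$ is smooth everywhere and coincides with the flat cone metric near infinity, there exist $Q>0$ and the function $\sigma(x)$ (with $\sigma(x) = cst.\,R$ near infinity and $\sigma$ bounded below on the compact core) such that for every $x\in J_G$ there are harmonic coordinates on $B_x(\sigma(x))$ in which $g$ is $C^{1,\alpha}$-controlled by $Q$ --- cf.\ \cite[Section 1.2]{hebey}. On such balls the Riemannian Schauder norms are uniformly equivalent to the Euclidean ones, so the Euclidean interior Schauder estimate \cite[Theorem 6.2]{GT} transfers, with a uniform constant, to each $B_x(\sigma(x))$ with the Riemannian Schauder norm.

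Next I would bring in the weight. The scaling of the definition (\ref{eq:15}) is designed so that the $i$-th derivative carries a factor $\sigma(x)^i$, which exactly matches the scaling of the Euclidean Schauder estimate on a ball of radius $\sigma(x)$; and $w^-_\gamma$ varies slowly, i.e.\ $C^{-1}w^-_\gamma(x)\leq w^-_\gamma(y)\leq C\,w^-_\gamma(x)$ for all $y\in B_x(\sigma(x))$, since $w^-_\gamma\sim R^\gamma$ and $\sigma\sim cst.\,R$ near infinity (and everything is bounded above and below on the core). Combining the local estimate with these two facts --- rescaling each ball to unit size, applying the uniform Euclidean estimate, and noting that the weight $w^-_{\gamma+2}$ is comparable to $\sigma^2 w^-_\gamma$ --- and taking suprema over $x$, one assembles the global weighted inequality $\|u\|_{C^{2,\alpha}_\gamma(J_G)}\leq C\big[\|u\|_{C^0_\gamma(J_G)}+\|\Delta u\|_{C^\alpha_{\gamma+2}(J_G)}\big]$, exactly as in Propositions \ref{schauder-cao} and \ref{schauder-cao-quotient}.

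The only thing requiring a moment's care --- the main (mild) obstacle --- is the behaviour near the exceptional set, where the injectivity radius does not stay bounded below uniformly relative to a fixed scale but is handled precisely by allowing $\sigma(x)$ to shrink there; since this region is compact, $\sigma$ is still bounded below on it by a positive constant, so finitely many balls suffice and no uniformity is lost. With that observation the proof is entirely parallel to the earlier ones, so I would simply write: "The proof is identical to that of Proposition \ref{schauder-cao}, using the weight $w^-_\gamma$ and the function $\sigma$; the varying radius $\sigma(x)$ accommodates the compact region near the exceptional set where the injectivity radius is not uniformly bounded below."
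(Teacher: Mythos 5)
Your proof is correct and follows the route the paper intends: the paper states this proposition without proof, treating it as the same standard patching argument as Propositions \ref{schauder-cao} and \ref{schauder-cao-quotient} (uniform local Schauder estimates in controlled coordinates on balls $B_x(\sigma(x))$, slowly varying weight, and $w^-_{\gamma+2}\sim\sigma^2 w^-_\gamma$), which is exactly what you carry out. One small correction of emphasis: the reference metric on $J_G$ is smooth across the exceptional set, so the injectivity radius is bounded below on the compact core and there is no metric degeneration there; the varying radius $\sigma(x)=cst.\,R$ is required at \emph{infinity}, to match the dilation scaling of the conical weight $R^\gamma$ --- but this misattribution does not affect the validity of your argument.
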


Since $H^1(J_G,\Rb)=0$, the $S^1$-action generated by $JX$ has a moment map $\mu^- $, that is a function such that
\begin{equation}
 d\mu^- = \iota_{JX}\omega^-.\label{eq:17}
\end{equation}
It follows that
\begin{equation*}
\cL_X\omega^-=d\iota_X\omega^-=-dJd\mu=-2i\ddb \mu^-.\label{eq:16}
\end{equation*}
Since near infinity $\omega^-=\tfrac{i}{2}\ddb \Phi^-$, we observe that
$d\mu^-=\iota_{JX}\tfrac{i}{2}\ddb\Phi^-=-\tfrac{1}{4}d\Phi^-$.
Therefore, up to a constant,
\begin{equation*}
  \label{eq:18}
  \mu^- = - \frac14 X\Phi^- .
\end{equation*}

\section{Generalities on steady K\"ahler-Ricci solitons}\label{uniqueness-solitons}

In this section, we digress to prove some general facts about steady K\"ahler-Ricci solitons, culminating in the uniqueness theorem stated in the Introduction.  We work on an arbitrary noncompact complex manifold $M$.

\begin{lemma}\label{steady-soliton-potential-equations}
  \begin{enumerate}
    \item Suppose that the Kähler forms $\omega$ and $\omega+\frac i2\ddb\Psi$ are both steady solitons with drift vector field $X$.  Then
  \[
  0=i\ddb\left[-\log\frac{(\omega+\frac i2\ddb\Psi)^n}{\omega^n} +\frac14 X\Psi\right].
  \]
\item  Suppose that  $\omega$ is K\"ahler,  that  $\mu$ is such that $\tfrac{1}{2}\cL_X\omega=-i\ddb\mu$, and that $\Omega$ is a global holomorphic $n$-form. Let  $f=-\log \frac{\omega^n}{\Omega\wedge\overline\Omega}-\mu$. Then $\omega+\frac i2\ddb\Psi$ is a steady soliton with drift vector field $X$, if and only if
  \[
  0=i\ddb \left[ -\log \frac{(\omega+\frac i2\ddb \Psi)^n}{\omega^n} + \frac14 X\Psi + f \right].
  \]
 \end{enumerate}
\end{lemma}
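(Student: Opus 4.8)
<br>

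The plan is to compute the soliton equation in terms of the Ricci form and then translate it into a statement about Kähler potentials, exploiting that $i\ddb$-closedness is exactly what survives.

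\textbf{Part (1).} The steady soliton equation for $\omega$ is $\Ric(\omega) + \tfrac12\cL_X\omega = 0$. Since $\Ric(\omega) = -i\ddb\log\frac{\omega^n}{\Omega\wedge\overline\Omega}$ locally (for any local holomorphic volume form, the ambiguity being $i\ddb$-closed), and since $\cL_X\omega = d\iota_X\omega$, the soliton condition says precisely that $i\ddb\log\frac{\omega^n}{\Omega\wedge\overline\Omega} = \tfrac12\cL_X\omega$, i.e.\ $\tfrac12\cL_X\omega$ is $i\ddb$-exact with potential $-\log\frac{\omega^n}{\Omega\wedge\overline\Omega}$ up to $i\ddb$-closed terms. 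Now I would write the same statement for $\omega' := \omega + \tfrac i2\ddb\Psi$, subtract, and observe two things: first, $(\omega')^n/\omega^n$ is a globally defined function, so $\Ric(\omega') - \Ric(\omega) = -i\ddb\log\frac{(\omega')^n}{\omega^n}$ with no ambiguity; second, $\tfrac12\cL_{X}(\omega'-\omega) = \tfrac12\cL_X(\tfrac i2\ddb\Psi) = \tfrac i2\ddb(\tfrac12\cL_X\Psi) = \tfrac i2\ddb(\tfrac12 X\Psi)$, using that $X$ is holomorphic so $\cL_X$ commutes with $\ddb$ and $\cL_X\Psi = X\Psi$. Combining, $0 = \Ric(\omega') - \Ric(\omega) + \tfrac12\cL_X(\omega'-\omega) = -i\ddb\log\frac{(\omega')^n}{\omega^n} + \tfrac i2\ddb(\tfrac12 X\Psi) = i\ddb\left[-\log\frac{(\omega')^n}{\omega^n} + \tfrac14 X\Psi\right]$, which is the claim. (The factor $\tfrac14$ versus $\tfrac12$ comes from the $\tfrac i2$ normalization in $\tfrac i2\ddb\Psi$; I would track this carefully.)

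\textbf{Part (2).} Here I would argue that $\omega' = \omega + \tfrac i2\ddb\Psi$ is a steady soliton with drift $X$ iff $\Ric(\omega') + \tfrac12\cL_X\omega' = 0$, and rewrite the left side using the given data. We have $\Ric(\omega') = -i\ddb\log\frac{(\omega')^n}{\Omega\wedge\overline\Omega} = -i\ddb\log\frac{(\omega')^n}{\omega^n} - i\ddb\log\frac{\omega^n}{\Omega\wedge\overline\Omega}$, and by definition of $f$, $-\log\frac{\omega^n}{\Omega\wedge\overline\Omega} = f + \mu$, so $\Ric(\omega') = -i\ddb\log\frac{(\omega')^n}{\omega^n} + i\ddb f + i\ddb\mu$. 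Meanwhile $\tfrac12\cL_X\omega' = \tfrac12\cL_X\omega + \tfrac12\cL_X(\tfrac i2\ddb\Psi) = -i\ddb\mu + \tfrac i2\ddb(\tfrac14\cdot 2\, X\Psi)$; more carefully, $\tfrac12\cL_X\omega = -i\ddb\mu$ by hypothesis and $\tfrac12\cL_X(\tfrac i2\ddb\Psi) = \tfrac i2\ddb(\tfrac12 X\Psi) = i\ddb(\tfrac14 X\Psi)$. Adding, the $\pm i\ddb\mu$ terms cancel and $\Ric(\omega') + \tfrac12\cL_X\omega' = i\ddb\left[-\log\frac{(\omega')^n}{\omega^n} + \tfrac14 X\Psi + f\right]$. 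The soliton condition is the vanishing of this, giving the stated equivalence.

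\textbf{Main obstacle and bookkeeping.} There is no deep obstacle here — the lemma is essentially the observation that the soliton equation is an $i\ddb$-exactness statement and that $\cL_X$ commutes with $i\ddb$ on functions for holomorphic $X$. The one point demanding care is the normalization constants: keeping straight the $\tfrac i2\ddb\Psi$ convention, the identity $\cL_X(\tfrac i2\ddb\Psi) = \tfrac i2\ddb(X\Psi)$, and the passage from $\tfrac12\cdot\tfrac12$ to $\tfrac14$, so that the coefficient of $X\Psi$ comes out as $\tfrac14$ in both parts. I would also remark that in part (1) no reference volume form or moment map is needed because all the ambiguous $i\ddb$-closed pieces cancel in the difference of two soliton equations, whereas part (2) compares a single $\omega'$ to the fixed background data $(\omega,\mu,\Omega)$ and hence must carry $f$ and $\mu$ explicitly; the compatibility $\tfrac12\cL_X\omega = -i\ddb\mu$ is exactly what makes the $\mu$ contributions cancel.
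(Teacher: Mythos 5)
Your proposal is correct and follows essentially the same route as the paper: both parts are obtained by expanding $\Ric(\omega+\tfrac i2\ddb\Psi)+\tfrac12\cL_X(\omega+\tfrac i2\ddb\Psi)$, using $\Ric=-i\ddb\log$ of the volume ratio and $\tfrac12\cL_X(\tfrac i2\ddb\Psi)=i\ddb(\tfrac14X\Psi)$, with the constants tracked correctly. The paper's own proof is just the two displayed computations, so your additional remarks on why part (1) needs no reference data while part (2) does are a harmless elaboration rather than a departure.
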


\begin{proof}
  For the first part, we calculate
\begin{align*}
0&=  \Ric(\omega+\tfrac i2\ddb \Psi) + \frac12 \cL_X(\omega+\tfrac i2\ddb \Psi)-  \Ric(\omega) - \frac12 \cL_X(\omega)\\
&= -i\ddb \log \frac{(\omega+\frac i2\ddb \Psi)^n}{\omega^n} +i\ddb(\frac14 X\Psi).
\end{align*}
  For the second, 
\begin{align}
  \Ric(\omega+\tfrac i2\ddb \Psi) + \frac12 \cL_X(\omega+\tfrac i2\ddb \Psi)
  &= -i\ddb \log \frac{(\omega+\frac i2\ddb \Psi)^n}{\Omega\wedge\overline\Omega} -i\ddb(\mu-\frac14 X\Psi) \notag \\
  &= i\ddb \left[ -\log \frac{(\omega+\frac i2\ddb \Psi)^n}{\omega^n} + \frac14 X\Psi + f \right].\label{eq:35}
\end{align}
  
\end{proof}

We can now prove the uniqueness of the Kähler-Ricci soliton in its Kähler class, as stated in the Introduction. This relies on the well-known strong maximum principle of Hopf \cite[Theorem 3.5]{GT}.

\begin{proof}[Proof of Proposition \ref{prop:uniqueness}]
  Suppose that the Kähler forms $\omega$ and $\omega+\frac i2\ddb\Psi$ are both steady solitons, with $\Psi$, $X\Psi$ and $\ddb\Psi$ going to 0 at infinity (the last in comparison with $\omega$).  Then 
  \[
  \log\frac{(\omega+\frac i2\ddb\Psi)^n}{\omega^n} -\frac14 X\Psi
  \]
  tends to 0 at infinity, and  by Lemma \ref{steady-soliton-potential-equations}
  \[
  \Delta_\omega\left[  -\log\frac{(\omega+\frac i2\ddb\Psi)^n}{\omega^n} +\frac14 X\Psi\right]=0.\]
By  the  strong maximum principle,  $\Delta_\omega$-harmonic functions on $M$ either do not attain their extrema, or are constant.
 So, since
the function  tends to 0 at infinity, it vanishes.

  By the convexity of the logarithmic function,
\[ 0=\log \frac{(\omega+\frac i2\ddb\Psi)^n}{\omega^n} - \frac14 X\Psi \leq \frac 14(\Delta_\omega-X)\Psi. \]
Again by the strong maximum principle, $(\Delta_\omega-X)$-subharmonic functions either do not attain their maximum, or are constant.  Since $\Psi\to 0$ at infinity, $\Psi\leq 0$ everywhere.

Inverting the roles of $\omega$ and $\omega+\frac i2\ddb\Psi$ gives the other inequality $\Psi\geq0$, so $\Psi=0$.
\end{proof}

\section{The approximate solution and the Monge-Ampère equation}
\label{sec:appr-solut-monge}
We now begin the gluing construction.
For each small $\epsilon>0$, define a diffeomorphism $p_\epsilon(z)=\epsilon^{-1}z$ of $\Cb^n/G$, which also extends as a diffeomorphism of the equivariant resolution $J_G$. Despite the fact that it is the same complex manifold, we wish to distinguish the desingularization of the Cao manifold, denoted $M_G$, from the ALE space $J_G$ used for this desingularization. In this way, we see $p_\epsilon$ as a `blowup map'
\[ p_\epsilon : M_G \longrightarrow J_G . \]
We will always consider $J_G$ equipped with the Joyce metric $\omega^-$ (from Section \ref{joyce-spaces}), whereas we want to construct on $M_G$ a Kähler-Ricci soliton desingularizing the Cao soliton $\omega_0$ on $C_G=(\Cb^n\setminus\{0\})/G$ (from Section \ref{asymptotics}).  Let $\Phi_0$ (on $C_G$) and $\Phi_-$ (on the complement of a compact subset of $J_G$) be as in those sections, so that $\omega_0=\frac i2\ddb \Phi_0$ and $\omega_-=\frac i2\ddb \Phi_-$.

For each small $\epsilon>0$, we define a reference Kähler metric $\omega_\epsilon$ on $M_G$ in the following way. Let $\chi$ be a cutoff function such that $\chi(r)=0$ for $r\leq1$ and $\chi(r)=1$ for $r\geq2$, and $\chi_\lambda(r)=\chi(r/\lambda)$. The gluing procedure will occur around the radius
\[ r_\epsilon = \epsilon^{\frac n{n+1}}. \]
We construct $\omega_\epsilon$ in the following way:
\begin{itemize}
\item on $r\geq 2 r_\epsilon$, let $\omega_\epsilon=\omega_0$;
\item on $r\leq r_\epsilon$, let $\omega_\epsilon=\epsilon^2 p_\epsilon^*\omega_-$;
\item on $r_\epsilon \leq r \leq 2r_\epsilon$, let $\omega_\epsilon=\frac i2 \ddb\Phi_\epsilon$, where
  \begin{equation}
   \Phi_\epsilon= \chi_{r_\epsilon}(r) \Phi_0 + (1-\chi_{r_\epsilon}(r)) \epsilon^2 p_\epsilon^*\Phi^- . \label{eq:23}
    \end{equation}
\end{itemize}
This is well-defined since in the intermediate region, $\omega_\epsilon$ coincides with $\omega_0$ near $r=2r_\epsilon$ and with $\epsilon^2 p_\epsilon^*\omega_-$ near $r=r_\epsilon$.  It is of course $S^1$-invariant.  Observe that the function $\Phi_\epsilon$ defined in \eqref{eq:23} is actually naturally defined on a larger region,  say $r\geq \epsilon$ (corresponding to $R\geq1$ in $J_G$), and that $\Phi_\epsilon$ is a potential for $\omega_\epsilon$ on this larger region.

To see that the forms $\omega_\epsilon$ are indeed K\"ahler for $\epsilon$ sufficiently small, note that we have $\Phi_0-r^2=O(r^4)$ and $\Phi^-(R)-R^2=O(R^{2-2n})$, with control on the derivatives: $\nabla^k(\Phi^-(R)-R^2)=O(R^{2-2n-k})$. It follows that, for $r_\epsilon \leq r \leq 2r_\epsilon$, one has
\begin{align*}
 r^2 - \epsilon^2 p_\epsilon^*\Phi^-  & = O( \epsilon^{2n} r^{2-2n}) = O( r_\epsilon^4 ), \\
 \nabla^2( r^2 - \epsilon^2 p_\epsilon^*\Phi^- ) & = O( \epsilon^{2n} r^{-2n}) = O( r_\epsilon^2 ).
\end{align*}
Therefore on $r_\epsilon \leq r \leq 2r_\epsilon$ we have
\begin{equation*}
 \omega_\epsilon - \frac i2 \ddb r^2 = O( r_\epsilon^2 ),\label{eq:21}
\end{equation*}
so for $\epsilon$ small enough $\omega_\epsilon$ is positive, so is a Kähler form everywhere.  

Since we also have bounds on all derivatives of $\Phi^-$, we more generally get on the same transition region $r_\epsilon\leq r\leq 2r_\epsilon$ the estimates
\begin{equation}
  \label{eq:22}
  \nabla^k(\Phi_\epsilon-r^2) = O(r_\epsilon^{4-k}), \quad \nabla^k(\omega_\epsilon - \frac i2 \ddb r^2) = O( r_\epsilon^{2-k} ).
\end{equation}

As in Section \ref{joyce-spaces}, there is a global moment map $\mu_\epsilon:M_G\rightarrow\Rb$, such that $d\mu_\epsilon=\iota_{JX}\omega_\epsilon$. We can fix the constant by deciding that, for $r\geq \epsilon$ one has
\begin{equation}\label{mu-epsilon}
  \mu_\epsilon = - \frac14 X \Phi_\epsilon
=\begin{cases} - \frac14 X \Phi_0,&  r\geq2r_\epsilon;\\
 - \frac14 X\left[\epsilon^2p_\epsilon^*\Phi^-\right]=\epsilon^2p_\epsilon^*\mu^-,& \epsilon\leq r\leq r_\epsilon.
  \end{cases}\end{equation}
The latter implies that, in fact, $\mu_\epsilon=\epsilon^2p_\epsilon^*\mu^-$ throughout the region $r\leq r_\epsilon$.  One again has  $\cL_X\omega_\epsilon = -2i \ddb \mu_\epsilon$.

Let $\Omega$ be the global holomorphic $n$-form on $M_G$ which agrees, on $C_G=\Cb^n/G$, with the Euclidean $n$-form.  (Such an $n$-form exists since the resolution $\pi:M_G\to C_G$ is crepant.)  Let
\begin{equation}
f_\epsilon=-\log \frac{\omega_\epsilon^n}{\Omega\wedge\overline\Omega}-\mu_\epsilon.\label{eq:20}
\end{equation}
and observe that by (\ref{soliton-eq}), $f_\epsilon$ vanishes on $r\geq 2r_\epsilon$.  Motivated by Lemma \ref{steady-soliton-potential-equations}, we now introduce the operator
\begin{equation}
  \label{eq:19}
  T_\epsilon(\Psi) := \frac{(\omega_\epsilon+\frac i2\ddb \Psi)^n}{\omega_\epsilon^n} - e^{\frac14 X\Psi+f_\epsilon}.
\end{equation}
For the rest of the article we study the operators $T_\epsilon$.

\section{Glued function spaces} \label{glued-spaces}

We define weight functions $w_\epsilon=w_{\epsilon,\gamma,\delta}$ depending on the parameters $\gamma\in(0,2n-2)$ and $\delta\in(0,1)$ (we will add the dependence in the parameters, for example $w_{\epsilon,\gamma}$ to specify the dependence in $\gamma$, only in case of ambiguity) in the following way:
\begin{itemize}
\item for $r\geq r_\epsilon$ the weight $w_\epsilon$ coincides with the weight defined on the Cao soliton: $w_\epsilon=w_0$; recall that $w_0(r)=r^\gamma$ near $r=0$;
\item for $r\leq 2r_\epsilon$ the weight $w_\epsilon$ coincides up to a constant with the weight for the Joyce metric: $w_\epsilon=\epsilon^\gamma p_\epsilon^*w^-$; since for $R$ large one has $w^-(R)=R^\gamma$, this choices ensures that the two definitions coincide for $r_\epsilon\leq r\leq 2r_\epsilon$.
\end{itemize}

We define a weighted Hölder space $C^{k,\alpha}_{\epsilon,\gamma,\delta}(M_G)$,  ``gluing the Hölder spaces of the Cao manifold and of the Joyce manifold,'' by the analogue of formula (\ref{eq:15}) for the glued metrics $\omega_\epsilon$, glued weights $w_{\epsilon,\gamma,\delta}$, and similarly glued $\sigma_\epsilon$.  For example, the $C^0$ norm is
\[ \|f\|_{C^0_{\epsilon,\gamma,\delta}} = \sup w_{\epsilon,\gamma,\delta}|f| . \]
A perhaps-clarifying remark is that, for cutoff functions $\chi_\lambda$ as in the previous section,
\[ \|f\|_{C^{k,\alpha}_{\epsilon,\gamma,\delta}} \sim
  \| \chi_{r_\epsilon}(r)f\|_{C^{k,\alpha}_{\gamma,\delta}(C_G)} + \epsilon^\gamma \| (p_\epsilon)_* (1-\chi_{r_\epsilon}(r))f\|_{C^{k,\alpha}_\gamma(J_G)} .
\]
(The factor $\epsilon^\gamma$ is exactly the factor required to make the two norms be equivalent on the transition region $r_\epsilon\leq r\leq 2r_\epsilon$.)

\begin{lemma}\label{lem:bounded}
  For each $\gamma$, $\delta$, and each $\epsilon>0$, the operator \[\Delta_\epsilon-X:C^{2,\alpha}_{\epsilon,\gamma,\delta}(M_G)\rightarrow C^\alpha_{\epsilon,\gamma+2,\delta}(M_G)\] is continuous, with norm bounded independently of $\epsilon$.
\end{lemma}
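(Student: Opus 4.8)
The plan is to reduce the claim, via the equivalence of norms recorded just before the lemma, to two uniform continuity statements: one on the ``Cao side'' $r\geq r_\epsilon$ (cut off by $\chi_{r_\epsilon}$), where $\omega_\epsilon=\omega_0$ and the relevant operator is the $\Delta-X$ of Section \ref{cao-spaces}–\ref{sec:analys-finite-quot}, and one on the ``Joyce side'' $r\leq 2r_\epsilon$, where after applying the blowup map $p_\epsilon$ the metric becomes the fixed Joyce metric $\omega^-$ and the operator transforms in a controlled way. On each side the boundedness of $\Delta-X$ (resp.\ the rescaled operator) as a map between the fixed weighted Hölder spaces $C^{2,\alpha}_{\gamma,\delta}(C_G)\to C^\alpha_{\gamma+2,\delta}(C_G)$ (resp.\ the corresponding spaces on $J_G$) is immediate from the fact that the coefficients of $\Delta-X$, measured in the geometry defining the weighted norms, are uniformly bounded; this is essentially the content of the Schauder estimates (Propositions \ref{schauder-cao-quotient} and \ref{schauder-joyce}) minus the hard inequality, and requires only the bounded-geometry facts (Lemma \ref{lem:bounded-curvature} and the ALE structure). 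The one thing to check carefully is that the constants produced do not degenerate as $\epsilon\to0$.

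In more detail, I would first note that a second-order operator of the form $u\mapsto\langle a,\nabla^2 u\rangle+\langle b,du\rangle$ is bounded $C^{2,\alpha}_{\epsilon,\gamma,\delta}\to C^\alpha_{\epsilon,\gamma'+2,\delta}$ with norm controlled by the weighted $C^\alpha$ norms of $a$ (measured against $g_\epsilon^{-1}$, i.e.\ as a bivector) and of $\sigma_\epsilon b$ (so that the first-order term also gains two powers of $\sigma_\epsilon$, consistent with the weight shift from $w_{\epsilon,\gamma}$ to $w_{\epsilon,\gamma+2}\sim\sigma_\epsilon^2 w_{\epsilon,\gamma}$). This is the standard ``product of Hölder functions'' bound, uniform in the manifold. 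For $\Delta_\epsilon-X$ we have $a=g_\epsilon^{-1}$, hence $|a|_{g_\epsilon}\equiv\text{const}$ and all its weighted covariant derivatives vanish; and $X=-2r\,\partial/\partial r$, which on the Cao region is $-4\,\partial/\partial t$ and is a bounded vector field there, while on the Joyce region $p_\epsilon^* X$ is again the dilation field $-2R\,\partial/\partial R$ on $J_G$ measured in $\omega^-$, so $\sigma\cdot p_\epsilon^* X$ is uniformly bounded in weighted $C^\alpha$ (the blowup is precisely the scaling that makes this $\epsilon$-independent). Thus on each piece the operator norm is bounded by a constant depending only on $G$, $n$, $\gamma$, $\delta$, $\alpha$, not on $\epsilon$.

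Finally, I would assemble the two pieces: writing $u=\chi_{r_\epsilon}u+(1-\chi_{r_\epsilon})u$ and using $[\Delta_\epsilon-X,\chi_{r_\epsilon}]u=2\langle d\chi_{r_\epsilon},du\rangle_{g_\epsilon}+(\Delta_\epsilon\chi_{r_\epsilon})u-(X\chi_{r_\epsilon})u$, one checks that the commutator terms are also uniformly bounded — here the key point is that $\chi_{r_\epsilon}$ is supported in the transition annulus $r_\epsilon\leq r\leq 2r_\epsilon$, where by \eqref{eq:22} the metric $g_\epsilon$ is uniformly comparable to the Euclidean metric $\tfrac i2\ddb r^2$ after rescaling by $r_\epsilon$, so that $|d\chi_{r_\epsilon}|_{g_\epsilon}=O(r_\epsilon^{-1})$ and $|\Delta_\epsilon\chi_{r_\epsilon}|=O(r_\epsilon^{-2})$, which are exactly compensated by the two powers of $\sigma_\epsilon\sim r_\epsilon$ in the weight shift and by $\sigma_\epsilon\sim r_\epsilon$ for the first-derivative term. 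The main obstacle, and the only place any real care is needed, is this last point: verifying that the cutoff commutator and the change of weight function across the transition region are compatible so that no power of $r_\epsilon$ survives — everything else is the routine observation that $\Delta-X$ has uniformly bounded coefficients in bounded geometry. I would organize the write-up so that this transition-region bookkeeping is the one explicit computation.
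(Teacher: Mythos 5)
Your proposal is correct and follows essentially the same route as the paper: the only nontrivial point is the action of $X$ on the Joyce region, and you identify exactly the right quantity ($\sigma_\epsilon|X|_{g_\epsilon}$, which is $O(r^2)=O(r_\epsilon^2)$ there thanks to the $\epsilon^2$ rescaling of the metric and the restriction $R\leq\epsilon^{-1}r_\epsilon$), matching the paper's observation that $X$ has norm $O(r_\epsilon^2)$ between the rescaled spaces $\epsilon^{-\gamma}C^{2,\alpha}_\gamma(\omega^-)$ and $\epsilon^{-\gamma-2}C^\alpha_{\gamma+2}(\omega^-)$. The cutoff-commutator assembly in your last paragraph is harmless but unnecessary, since the glued norm is defined directly by suprema over balls $B_x(\sigma_\epsilon(x))$ and one can estimate $(\Delta_\epsilon-X)u$ region by region without decomposing $u$.
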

\begin{proof}
  We have that $(\Delta_\epsilon-X)u=\langle \omega_\epsilon^{-1}, \nabla^2u\rangle+\langle -X, du\rangle$, where $\langle \cdot,\cdot\rangle$ denotes a full trace.  Since both $\omega_{\epsilon}^{-1}$ and $X$ are bounded with respect to the metric $\omega_\epsilon$, the operator is continuous.
  The function spaces were constructed so that the statement is clear for the Laplacian $\Delta:C^{2,\alpha}_{\epsilon,\gamma,\delta}\rightarrow C^\alpha_{\epsilon,\gamma+2,\delta}$. There remains to understand the action of the vector field $X=-2r\frac\partial{\partial r}$. A priori the radial vector field $R \frac\partial{\partial R}$ on $J_G$ does not act on the weighted spaces, which are constructed for a vector field like $\frac\partial{R \partial R}$. But thanks to the scaling built in the definition of the function spaces, it will act: when we transport the function spaces to $(J_G,\omega^-)$, the statement to prove on the region $r\leq r_\epsilon$ (that is $R\leq \epsilon^{-1}r_\epsilon=\epsilon^{\frac{-1}{n+1}}$) is that we have a bound on the operator
  \[ X : \epsilon^{-\gamma} C^{2,\alpha}_\gamma(\omega^-) \longrightarrow \epsilon^{-\gamma-2} C^\alpha_{\gamma+2}(\omega^-) .\]
Since $R^2\leq \epsilon^{-2}r_\epsilon^2$, this operator on the region $R\leq\epsilon^{-1}r_\epsilon$ has actually small norm bounded by $O(r_\epsilon^2)$.
\end{proof}

\begin{proposition}[Uniform Schauder estimate]\label{eq:26}
  There exists a constant $C$, such that 
for  all $\epsilon>0$ sufficiently small, and all $u$ on $M_G$,
 \[
  \| u \|_{C^{2,\alpha}_{\epsilon,\gamma,\delta}(M_G)}
  \leq C\left[ \| u \|_{C^0_{\epsilon,\gamma,\delta}(M_G)} +\| (\Delta_\epsilon-X)u \|_{C^{0,\alpha}_{\epsilon,\gamma+2,\delta}(M_G)} \right].
  \]
\end{proposition}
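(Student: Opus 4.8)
The plan is to repeat, uniformly in $\epsilon$, the local-to-global argument used for Propositions~\ref{schauder-cao}, \ref{schauder-cao-quotient} and~\ref{schauder-joyce}; the only genuinely new point is the uniformity of all constants as $\epsilon\to 0$. First I would record the glued radius function $\sigma_\epsilon$, obtained by gluing the functions $\sigma$ of Sections~\ref{sec:analys-finite-quot} and~\ref{joyce-spaces}: on $r\geq r_\epsilon$ it is the Cao $\sigma$ (bounded above, and comparable to $r$ near the gluing region), while on $r\leq r_\epsilon$ it is $\epsilon\, p_\epsilon^*$ of the injectivity-radius function of $(J_G,\omega^-)$ (which is $\sim R$ near infinity), so that the two prescriptions agree, up to a fixed constant, on the transition annulus $r_\epsilon\leq r\leq 2r_\epsilon$, where both are comparable to $r_\epsilon$. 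By construction $\sigma_\epsilon(x)$ is, uniformly in $\epsilon$, a lower bound for the injectivity radius of $\omega_\epsilon$ at $x$; in particular it captures the fact that near the exceptional set the injectivity radius of $\omega_\epsilon=\epsilon^2 p_\epsilon^*\omega^-$ is only of order $\epsilon$.

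Next I would check that $(M_G,\omega_\epsilon)$ has $C^{1,\alpha}$-bounded geometry at scale $\sigma_\epsilon$, with a constant $Q$ independent of $\epsilon$: around each $x$ there are harmonic coordinates on $B_x(\sigma_\epsilon(x))$ in which, after rescaling the ball to unit size, $\omega_\epsilon$ is $C^{1,\alpha}$-controlled by $Q$. On $r\geq 2r_\epsilon$ this is Lemma~\ref{lem:bounded-curvature} applied to the fixed Cao metric (away from the resolved region); on $r\leq r_\epsilon$ it holds because $\omega_\epsilon=\epsilon^2 p_\epsilon^*\omega^-$ is, up to the fixed diffeomorphism $p_\epsilon$, the fixed Joyce metric $\omega^-$, which has bounded geometry at its own injectivity-radius scale; and on the transition annulus, rescaling coordinates by $r_\epsilon^{-1}$ and invoking the estimates~(\ref{eq:22}), namely $\nabla^k(\omega_\epsilon-\tfrac i2\ddb r^2)=O(r_\epsilon^{2-k})$, shows that in the rescaled picture $\omega_\epsilon$ is $C^{1,\alpha}$-close to the flat metric $\tfrac i2\ddb r^2$, uniformly in $\epsilon$.

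Then, on each ball $B_x(\sigma_\epsilon(x))$, I would apply the Euclidean interior Schauder estimate \cite[Theorem~6.2]{GT}; by the previous step it holds, with a uniform constant, for the Schauder norm with respect to $\omega_\epsilon$. The operator $\Delta_\epsilon-X$ is uniformly elliptic for $\omega_\epsilon$ — trivially, being its Laplacian — and has no zeroth-order term, while its first-order term $X$ satisfies $|X|_{\omega_\epsilon}$ bounded uniformly in $\epsilon$ (indeed, as observed in the proof of Lemma~\ref{lem:bounded}, the $X$-contribution is of size $O(r_\epsilon^2)$ on the Joyce part, since there $R^2\leq\epsilon^{-2}r_\epsilon^2$). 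Since the weights $w_{\epsilon,\gamma,\delta}$ are slowly varying at scale $\sigma_\epsilon$, i.e.\ $C^{-1}w_\epsilon(x)\leq w_\epsilon(y)\leq Cw_\epsilon(x)$ for $y\in B_x(\sigma_\epsilon(x))$ with $C$ uniform in $\epsilon$, multiplying the local estimates by $w_\epsilon(x)$ and taking the supremum over $x$ assembles them into the asserted global weighted inequality; the shift from $w_{\epsilon,\gamma,\delta}$ to $w_{\epsilon,\gamma+2,\delta}$ on the right-hand side is exactly the factor $\sigma_\epsilon^2$ attached to second derivatives in the scaled estimate, just as $w_{\gamma+2,\delta}=\sigma^2 w_{\gamma,\delta}$ in Proposition~\ref{schauder-cao-quotient}.

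The main obstacle is the second step: ensuring that the bounded-geometry constant $Q$ of $\omega_\epsilon$ at scale $\sigma_\epsilon$ does not degenerate as $\epsilon\to 0$, particularly across the transition annulus where $\omega_\epsilon$ interpolates between the rescaled Joyce and Cao potentials. Once~(\ref{eq:22}) is in hand this is routine, but it is the place where the choice $r_\epsilon=\epsilon^{n/(n+1)}$ and the decay rates $\Phi_0-r^2=O(r^4)$, $\nabla^k(\Phi^--R^2)=O(R^{2-2n-k})$ are used. Equivalently, one may phrase the whole argument by transporting the estimate on $\{r\leq r_\epsilon\}$ to $(J_G,\omega^-)$ via $p_\epsilon$, noting that on $\{r\geq r_\epsilon\}$ it is (a portion of) the fixed estimate of Proposition~\ref{schauder-cao-quotient}, so that the only $\epsilon$-dependent work is the matching on $r_\epsilon\leq r\leq 2r_\epsilon$.
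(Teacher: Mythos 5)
Your proposal is correct and follows essentially the same route as the paper, which simply refers back to the proofs of Propositions \ref{schauder-cao-quotient} and \ref{schauder-joyce} and notes that the only new feature is the $Xu$ term on the Joyce region, controlled by the $O(r_\epsilon^2)$ bound from the proof of Lemma \ref{lem:bounded}. Your write-up fills in the same local-to-global scheme (glued $\sigma_\epsilon$, uniform $C^{1,\alpha}$ bounded geometry via \eqref{eq:22} on the transition annulus, slowly varying weights, and the $\sigma_\epsilon^2$ shift from $w_{\epsilon,\gamma,\delta}$ to $w_{\epsilon,\gamma+2,\delta}$) in more detail than the paper chooses to.
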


\begin{proof}
  This is as for Propositions \ref{schauder-cao-quotient} and \ref{schauder-joyce}, with the difference in the last case (the Joyce part) that we have the additional term $Xu$. But we have seen in the proof of Lemma \ref{lem:bounded} that when we restrict to the region $R\leq\epsilon^{-1}r_\epsilon$, then we have $\|Xu\|_{C^\alpha_{\epsilon,\gamma+2,\delta}} \leq c r_\epsilon^2 \|u\|_{C^{2,\alpha}_{\epsilon,\gamma,\delta}}$, so this term does not change the uniform Schauder estimate.
\end{proof}

\section{Error term}

We want to control $f_\epsilon$ defined by \eqref{eq:20}. We know that $f_\epsilon$ vanishes on $r\geq 2r_\epsilon$, so the needed estimate is only on the Joyce manifold. We prove:
\begin{proposition}\label{zeroth-order}
  For all $\gamma\geq-4$, $\delta$, and $k$,  there exists $C$ such that for $\epsilon$ sufficiently small,
  \[  \| f_\epsilon \|_{C^{k}_{\epsilon,\gamma+2,\delta}} \leq C r_\epsilon^{4+\gamma}.  \]
\end{proposition}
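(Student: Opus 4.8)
The plan is to reduce the estimate on $f_\epsilon$ to a pointwise estimate on the transition region $r_\epsilon\le r\le 2r_\epsilon$ (where $f_\epsilon$ may be nonzero) together with a controlled estimate on the Joyce part $r\le r_\epsilon$. Recall $f_\epsilon=-\log\frac{\omega_\epsilon^n}{\Omega\wedge\overline\Omega}-\mu_\epsilon$, and that by \eqref{soliton-eq} it vanishes identically for $r\ge 2r_\epsilon$. On the region $r\le r_\epsilon$, $\omega_\epsilon=\epsilon^2 p_\epsilon^*\omega^-$ and $\mu_\epsilon=\epsilon^2 p_\epsilon^*\mu^-$, so $f_\epsilon$ there is essentially $p_\epsilon^*$ of the corresponding quantity $f^-=-\log\frac{(\omega^-)^n}{\Omega\wedge\overline\Omega}-\mu^-$ on $J_G$ (up to the scaling behaviour of the log, which only shifts by a constant that must be checked to be absorbed correctly — the homogeneity of $\Omega\wedge\overline\Omega$ under $p_\epsilon$ produces a constant, and the freedom in the constant defining $\mu_\epsilon$ in \eqref{mu-epsilon} was fixed precisely so this works out). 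Since $\omega^-$ is Ricci-flat, $-\log\frac{(\omega^-)^n}{\Omega\wedge\overline\Omega}$ is (pluri)harmonic, hence $f^-=-$(harmonic)$-\mu^-$, and near infinity on $J_G$ one computes using $\Phi^-=R^2+AR^{2-2n}+\psi$ with $\psi\in C^\infty_\gamma$ for all $\gamma<2n-1$ that $f^-$ lies in $C^\infty_{2n-2}(J_G)$ — indeed the leading $R^2$ terms cancel between the volume ratio and $\mu^-=-\tfrac14 X\Phi^-$ by the Ricci-flatness/soliton normalization, leaving a tail of order $R^{-(2n-2)}$ and better. (This is exactly the content encoded in Joyce's theorem: the ALE metric agrees with the flat one up to $C^\infty_{2n}$, which forces the volume-ratio correction to be $O(R^{-2n})$; the weaker statement $f^-\in C^\infty_{2n-2}$ suffices here and is what survives after including $\mu^-$.)

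Next I would unravel what membership $f^-\in C^\infty_{2n-2}(J_G)$ gives after applying $\epsilon^2 p_\epsilon^*$ and measuring in the glued norm $C^k_{\epsilon,\gamma+2,\delta}$. Using the identification $\|g\|_{C^k_{\epsilon,\gamma+2,\delta}}\sim \epsilon^{\gamma+2}\|(p_\epsilon)_*g\|_{C^k_{\gamma+2}(J_G)}$ on the Joyce part (from the clarifying remark in Section \ref{glued-spaces}), and that pulling back by $p_\epsilon$ converts a $C^k_{2n-2}(J_G)$ function of size $O(R^{-(2n-2)})$ into one of pointwise size $O((r/\epsilon)^{-(2n-2)})=O(\epsilon^{2n-2}r^{-(2n-2)})$ with the corresponding derivative bounds, one finds that on $\{r\le r_\epsilon\}$
\[
|\nabla^i f_\epsilon|=O\!\left(\epsilon^{2}\cdot \epsilon^{2n-2}r^{-(2n-2)}\cdot r^{-i}\right)=O\!\left(\epsilon^{2n}r^{-(2n-2)-i}\right).
\]
Multiplying by the weight $w_{\epsilon,\gamma+2,\delta}\sim \epsilon^{-\gamma}(r/\epsilon)^{\gamma+2}$ wait — more carefully, on this region $w_{\epsilon,\gamma+2}=\epsilon^{\gamma+2}p_\epsilon^*w^-_{\gamma+2}\sim \epsilon^{\gamma+2}(r/\epsilon)^{\gamma+2}=r^{\gamma+2}$, and the Hölder/derivative weights carry the extra $\sigma_\epsilon^i\sim r^i$. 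So $\sigma_\epsilon^i w_{\epsilon,\gamma+2}|\nabla^i f_\epsilon|=O(r^{\gamma+2+i}\cdot \epsilon^{2n}r^{-(2n-2)-i})=O(\epsilon^{2n}r^{\gamma+4-2n})$, which on $r\le r_\epsilon=\epsilon^{n/(n+1)}$ is maximized (for $\gamma\ge -4$, so that the exponent $\gamma+4-2n$ is $>2-2n\ge $ negative but we are shrinking $r$... one must check the sign) — here is where the exponent bookkeeping matters: $\epsilon^{2n}r_\epsilon^{\gamma+4-2n}=\epsilon^{2n}\epsilon^{\frac{n(\gamma+4-2n)}{n+1}}$, and one verifies this equals $C r_\epsilon^{4+\gamma}=C\epsilon^{\frac{n(4+\gamma)}{n+1}}$ after the identity $2n+\frac{n(\gamma+4-2n)}{n+1}=\frac{n(\gamma+4)}{n+1}$, i.e. $2n(n+1)+n(\gamma+4-2n)=n(\gamma+4)$, which holds. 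So the correct maximization is at $r=r_\epsilon$ (the worst point of the Joyce region is its outer edge) precisely when $\gamma+4-2n\le 0$, i.e.\ $\gamma\le 2n-4$, which is within the operative range $\gamma<2n-2$; for the borderline larger $\gamma$ one takes the other endpoint but the stated bound, with its hypothesis $\gamma\ge-4$ ensuring $r_\epsilon^{4+\gamma}\to0$, still follows. I would present this as: the $C^k_{\epsilon,\gamma+2,\delta}$-norm of $f_\epsilon$ restricted to $r\le r_\epsilon$ equals $\epsilon^{\gamma+2}$ times the $C^k_{\gamma+2}(J_G)$-norm of $(p_\epsilon)_* f_\epsilon$ over $R\le \epsilon^{-1}r_\epsilon$, and since $f^-\in C^k_{2n-2}\subset C^k_{\gamma+2}$-with-a-gain on a bounded region, that norm is $O(\epsilon^{\gamma+2}\cdot (\epsilon^{-1}r_\epsilon)^{-(2n-2)+(\gamma+2)})=O(r_\epsilon^{\gamma+4})$ after simplification.

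For the transition region $r_\epsilon\le r\le 2r_\epsilon$, I would use the estimates \eqref{eq:22}: $\nabla^k(\Phi_\epsilon-r^2)=O(r_\epsilon^{4-k})$ and $\nabla^k(\omega_\epsilon-\tfrac i2\ddb r^2)=O(r_\epsilon^{2-k})$. Writing $\omega_\epsilon=\tfrac i2\ddb r^2+E$ with $E=O(r_\epsilon^2)$ (and $\nabla^k E=O(r_\epsilon^{2-k})$), and $\mu_\epsilon=-\tfrac14 X\Phi_\epsilon=-\tfrac14 X r^2-\tfrac14 X(\Phi_\epsilon-r^2)$, one expands $f_\epsilon=-\log\frac{\omega_\epsilon^n}{\Omega\wedge\overline\Omega}-\mu_\epsilon$. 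On the flat background $\tfrac i2\ddb r^2$ one has $\frac{(\frac i2\ddb r^2)^n}{\Omega\wedge\overline\Omega}=1$ and $-\tfrac14 X r^2=-\tfrac14\cdot 2\cdot r^2=-\tfrac12 r^2$... but that is the contribution whose $i\ddb$ cancels appropriately; the point is that $f_\epsilon$ for the exactly-flat configuration vanishes (this is \eqref{soliton-eq} in the limit, and is why $f_\epsilon\equiv0$ for $r\ge 2r_\epsilon$), so $f_\epsilon$ on the transition region is a sum of terms each at least linear in either $E$ or $\nabla(\Phi_\epsilon-r^2)$ or $X(\Phi_\epsilon-r^2)$. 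The dominant term is $O(r_\epsilon^2)$ pointwise from $\mathrm{tr}\,E$ and from $X(\Phi_\epsilon-r^2)=O(r_\epsilon^4)$... actually $X(\Phi_\epsilon-r^2)=-2r\partial_r(\Phi_\epsilon-r^2)$ with $\partial_r(\Phi_\epsilon-r^2)=O(r_\epsilon^3)$ at $r\sim r_\epsilon$, giving $O(r_\epsilon^4)$; and $\mathrm{tr}\,E=O(r_\epsilon^2)$ — hmm, this suggests the pointwise size of $f_\epsilon$ on the transition region is $O(r_\epsilon^2)$, not $O(r_\epsilon^{4+\gamma})$; but the weighted norm there is $w_{\epsilon,\gamma+2}\sim r^{\gamma+2}\sim r_\epsilon^{\gamma+2}$, so $w_{\epsilon,\gamma+2}|f_\epsilon|=O(r_\epsilon^{\gamma+4})$ as required, and the derivative terms $\sigma_\epsilon^i\sim r_\epsilon^i$ similarly absorb the $\nabla^i$ losses. \textbf{The main obstacle} I anticipate is precisely this bookkeeping in the transition region — correctly identifying that $f_\epsilon$ vanishes for the flat configuration so that only the \emph{differences} $\Phi_\epsilon-r^2$ and $\omega_\epsilon-\tfrac i2\ddb r^2$ enter, and then tracking the weight exponents so that the pointwise bound $O(r_\epsilon^2)$ for $f_\epsilon$ combines with the weight $r_\epsilon^{\gamma+2}$ to give $r_\epsilon^{\gamma+4}$; one must also be careful that the constant arising from the $p_\epsilon$-homogeneity of $\Omega\wedge\overline\Omega$ in the volume ratio is exactly cancelled by the constant in the normalization of $\mu_\epsilon$, which is why $f_\epsilon$ can vanish where $\omega_\epsilon=\omega_0$ and why the Joyce-part contribution is genuinely $f^-\in C^\infty_{2n-2}$ rather than something with a nonzero constant term. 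Finally I would combine the transition-region and Joyce-region bounds; since $f_\epsilon$ vanishes on $r\ge 2r_\epsilon$ these two regions cover its support and the glued norm is (equivalent to) the max of the contributions, yielding $\|f_\epsilon\|_{C^k_{\epsilon,\gamma+2,\delta}}\le C r_\epsilon^{4+\gamma}$.
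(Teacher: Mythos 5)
Your treatment of the transition region $r_\epsilon\le r\le 2r_\epsilon$ is essentially the paper's argument and is fine: there $f_\epsilon$ is pointwise $O(r_\epsilon^2)$ (with a loss of $r_\epsilon^{-1}$ per derivative, from \eqref{eq:22}), and the weight $r^{\gamma+2}\sim r_\epsilon^{\gamma+2}$ upgrades this to $r_\epsilon^{\gamma+4}$. The problem is your analysis of the Joyce region $r\le r_\epsilon$. You claim that the leading $R^2$ terms ``cancel between the volume ratio and $\mu^-$,'' so that $f^-=-\log\frac{(\omega^-)^n}{\Omega\wedge\ov\Omega}-\mu^-$ lies in $C^\infty_{2n-2}(J_G)$. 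This is false. Since $\omega^-$ is Ricci-flat and asymptotic to the flat metric, the volume ratio $\frac{(\omega^-)^n}{\Omega\wedge\ov\Omega}$ is identically $1$, so its logarithm vanishes and contributes no $R^2$ term to cancel anything; meanwhile $\mu^-=-\tfrac14X\Phi^-$ with $\Phi^-=R^2+AR^{2-2n}+\psi$ grows exactly like $R^2$. Hence $f^-=-\mu^-=O(R^2)$, not $O(R^{2-2n})$. (The cancellation you have in mind does occur on the Cao region, where $-\log\frac{\omega_0^n}{\Omega\wedge\ov\Omega}=\varphi=\mu_0$ by \eqref{soliton-eq} — that is precisely why $f_\epsilon\equiv 0$ for $r\ge 2r_\epsilon$ — but it fails on the Joyce region because $\omega^-$ is Ricci-flat rather than a soliton. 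Indeed the whole point of the Remark following the proposition is that the Joyce-region error, coming from the $R^2$ growth of $\mu^-$, is of the \emph{same} order $r_\epsilon^{\gamma+4}$ as the transition-region error, and this is what forces the choice $r_\epsilon=\epsilon^{n/(n+1)}$; under your premise the Joyce error would be strictly smaller and that balancing would be unnecessary.)

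The correct computation, which is what the paper does, is: on $r\le r_\epsilon$ one has $f_\epsilon=-\mu_\epsilon=-\epsilon^2p_\epsilon^*\mu^-$ by \eqref{mu-epsilon}, so
\[
\|\mu_\epsilon\|_{C^k_{\epsilon,\gamma+2,\delta}(\{r\le r_\epsilon\})}=\epsilon^{\gamma+2}\|(p_\epsilon^{-1})^*\mu_\epsilon\|_{C^k_{\gamma+2}(\{R\le\epsilon^{-1}r_\epsilon\})}=\epsilon^{\gamma+4}\|\mu^-\|_{C^k_{\gamma+2}(\{R\le\epsilon^{-1}r_\epsilon\})}\le C\epsilon^{\gamma+4}(\epsilon^{-1}r_\epsilon)^{\gamma+4}=Cr_\epsilon^{\gamma+4},
\]
using $\mu^-=O(R^2)$ against the weight $R^{\gamma+2}$; this is where the hypothesis $\gamma\ge-4$ enters (the supremum is attained at the outer edge $R=\epsilon^{-1}r_\epsilon$). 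Note also that your verification ``$2n(n+1)+n(\gamma+4-2n)=n(\gamma+4)$, which holds'' is arithmetically wrong: the left side is $n\gamma+6n$ and the right side is $n\gamma+4n$; your (incorrect) premise would actually yield $\epsilon^{2n}r_\epsilon^{\gamma+4-2n}=r_\epsilon^{\gamma+6}$. By coincidence both errors only make your claimed Joyce-region bound too strong rather than too weak, so the stated conclusion survives, but the argument as written does not establish it for the right reason and misrepresents where the dominant error actually comes from.
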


\begin{corollary}\label{coro:T0}
  Under the same hypotheses, one has
  \[
  \| T_\epsilon(0) \|_{C^{k}_{\epsilon,\gamma+2,\delta}} = \| 1-e^{f_\epsilon}\|_{C^{k}_{\epsilon,\gamma+2,\delta}} \leq C r_\epsilon^{4+\gamma}.
  \]
\end{corollary}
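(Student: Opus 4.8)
The plan is to estimate $f_\epsilon = -\log\frac{\omega_\epsilon^n}{\Omega\wedge\overline\Omega} - \mu_\epsilon$ on the transition region $r_\epsilon \leq r \leq 2r_\epsilon$ and on the Joyce region $r \leq r_\epsilon$, since $f_\epsilon$ vanishes identically for $r \geq 2r_\epsilon$. The key input is the already-recorded estimate \eqref{eq:22}, which says $\nabla^k(\Phi_\epsilon - r^2) = O(r_\epsilon^{4-k})$ and $\nabla^k(\omega_\epsilon - \tfrac i2\ddb r^2) = O(r_\epsilon^{2-k})$ on the transition region, together with the fact that the weight function $w_{\epsilon,\gamma+2,\delta}$ is comparable to $r_\epsilon^{\gamma+2}$ throughout $r \leq 2r_\epsilon$ (since $w_{\epsilon,\gamma,\delta} \sim r^\gamma \sim r_\epsilon^\gamma$ there and $\sigma_\epsilon^2 \sim r_\epsilon^2$). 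So proving $\|f_\epsilon\|_{C^k_{\epsilon,\gamma+2,\delta}} \leq Cr_\epsilon^{4+\gamma}$ reduces to proving the \emph{unweighted} bound $|\nabla^k f_\epsilon| = O(r_\epsilon^{2})$ on $r \leq 2r_\epsilon$ — indeed $r_\epsilon^{\gamma+2} \cdot r_\epsilon^2 = r_\epsilon^{4+\gamma}$ — where here $\nabla$ and $|\cdot|$ are taken with respect to $\omega_\epsilon$, and the scale-invariance built into the weighted norms means this is the same as an $O(r_\epsilon^2)$ bound in the rescaled Joyce picture.

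First I would handle the Joyce region $r \leq r_\epsilon$ (i.e. $R \leq \epsilon^{-1}r_\epsilon = \epsilon^{-1/(n+1)}$). Here $\omega_\epsilon = \epsilon^2 p_\epsilon^*\omega^-$ and $\mu_\epsilon = \epsilon^2 p_\epsilon^*\mu^-$, while $\Omega = \epsilon^n p_\epsilon^*\Omega$ (the Euclidean $n$-form scales with weight $\epsilon^n$ under $p_\epsilon(z) = \epsilon^{-1}z$). Since $\omega^-$ is Ricci-flat, $\log\frac{(\omega^-)^n}{\Omega\wedge\overline\Omega}$ is \emph{pluriharmonic}, hence (being a globally defined function that is $O(R^{-\gamma})$ at infinity by the ALE expansion $\Phi^- = R^2 + AR^{2-2n} + \psi$) it is, up to an additive constant, exactly $-\mu^-$ modulo a harmonic error: more precisely the soliton structure means $-\log\frac{(\omega^-)^n}{\Omega\wedge\overline\Omega} - \mu^-$ is a constant plus a term I can read off from Section~\ref{joyce-spaces}, where $\mu^- = -\tfrac14 X\Phi^-$ agrees with the Ricci-flat condition only up to terms in $C^\infty_{2n}$. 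The point is that $f_\epsilon$ on this region is (after removing the constant, which is absorbed by the translation normalization in Section~\ref{asymptotics}) controlled by the $O(R^{-2n})$-type decaying part of $\Phi^- - R^2$; evaluated at $R \leq \epsilon^{-1/(n+1)}$ this is at worst $O(\epsilon^{2n/(n+1)}) = O(r_\epsilon^2)$ by the choice $r_\epsilon = \epsilon^{n/(n+1)}$, and the derivative bounds $\nabla^k(\Phi^- - R^2) = O(R^{2-2n-k})$ give the same for all $\nabla^k f_\epsilon$ after the scaling.

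Next I would handle the transition region $r_\epsilon \leq r \leq 2r_\epsilon$, where $\omega_\epsilon = \tfrac i2\ddb\Phi_\epsilon$ with $\Phi_\epsilon$ interpolating between $\Phi_0$ and $\epsilon^2 p_\epsilon^*\Phi^-$. Using \eqref{eq:22}, $\omega_\epsilon$ is $C^k$-close to $\tfrac i2\ddb r^2$ with error $O(r_\epsilon^{2-k})$, so $\frac{\omega_\epsilon^n}{\Omega\wedge\overline\Omega} = 1 + O(r_\epsilon^2)$ with matching derivative control, hence $-\log\frac{\omega_\epsilon^n}{\Omega\wedge\overline\Omega} = O(r_\epsilon^2)$ in $C^k$; and $\mu_\epsilon = -\tfrac14 X\Phi_\epsilon = -\tfrac14 X(r^2) + O(r_\epsilon^2) = -\tfrac12 r^2\partial_r(r^2)\cdot(\ldots)$ — wait, more simply $X\Phi_\epsilon = X r^2 + X(\Phi_\epsilon - r^2)$ and $X(\Phi_\epsilon - r^2) = O(r_\epsilon^2)$ by \eqref{eq:22} since $X = -2r\partial_r$ costs one derivative and one power of $r \sim r_\epsilon$, hence in $C^k$ we get $\mu_\epsilon = -\tfrac14 X r^2 + O(r_\epsilon^2)$. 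Combining with the soliton equation \eqref{soliton-eq} (which says precisely that $f = 0$ for the genuine Cao potential $\Phi_0$, i.e. $-\log\frac{\omega_0^n}{\Omega\wedge\overline\Omega} = \mu_0 = -\tfrac14 X\Phi_0$), the leading terms cancel and $f_\epsilon = O(r_\epsilon^2)$ in $C^k$ on the transition region. Putting the two regions together and multiplying by the weight gives the claimed bound.

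The main obstacle I expect is bookkeeping the scaling factors correctly: the weighted norm $\|\cdot\|_{C^k_{\epsilon,\gamma+2,\delta}}$ on the Joyce region is, per the clarifying remark in Section~\ref{glued-spaces}, comparable to $\epsilon^{\gamma+2}\|(p_\epsilon)_*(\cdot)\|_{C^k_{\gamma+2}(J_G)}$ — but one must be careful that $\omega_\epsilon = \epsilon^2 p_\epsilon^*\omega^-$ rescales lengths by $\epsilon$, so covariant derivatives $\nabla^k_{\omega_\epsilon}$ in the $M_G$ picture correspond to $\epsilon^{-k}\nabla^k_{\omega^-}$ in the $J_G$ picture, and the $\sigma_\epsilon^k$ factor in the norm \eqref{eq:15} is exactly designed to compensate this. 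Once the dictionary $f_\epsilon \leftrightarrow \epsilon^2 p_\epsilon^*(\text{something in } C^k_{\gamma'}(J_G))$ is set up, the estimate $\|f_\epsilon\|_{C^k_{\epsilon,\gamma+2,\delta}} \leq C r_\epsilon^{4+\gamma}$ follows from $r_\epsilon = \epsilon^{n/(n+1)}$ and the decay rate $2n$ of $\Phi^- - R^2$; the corollary is then immediate from $|1 - e^{f_\epsilon}| \leq e^{|f_\epsilon|}|f_\epsilon|$ and the fact (implicit in the proposition for $k$ large, hence $f_\epsilon$ small in $C^0$) that $f_\epsilon$ is uniformly bounded, together with the standard composition estimate for the smooth function $x \mapsto 1 - e^x$ in weighted Hölder (here weighted $C^k$) spaces.
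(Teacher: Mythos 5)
Your overall architecture matches the paper's: note that $f_\epsilon$ vanishes for $r\geq 2r_\epsilon$, bound it separately on the transition annulus and on the glued-in Joyce region, and then pass from $f_\epsilon$ to $1-e^{f_\epsilon}$ by the composition estimate for $x\mapsto e^x-1$. The transition-region estimate via \eqref{eq:22} and the final composition step are fine. However, your treatment of the Joyce region $r\leq r_\epsilon$ rests on a false cancellation. Since $\omega^-$ is Ricci-flat, $\log\frac{(\omega^-)^n}{\Omega\wedge\overline\Omega}$ is pluriharmonic \emph{and decays at infinity}, hence is identically zero; it is certainly not equal to $-\mu^-$ up to a constant plus a decaying term, because $\mu^-=-\tfrac14 X\Phi^-$ \emph{grows} like $R^2$. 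Consequently, on $r\leq r_\epsilon$ the log-volume term of $f_\epsilon$ vanishes identically and $f_\epsilon=-\mu_\epsilon=-\epsilon^2p_\epsilon^*\mu^-\approx -r^2$: it is governed by the growing part of $\mu^-$, not by the $O(R^{2-2n})$ tail of $\Phi^--R^2$. The correct computation is $w_{\epsilon,\gamma+2}\,|f_\epsilon|\sim r^{\gamma+2}\cdot r^2=r^{\gamma+4}\leq r_\epsilon^{\gamma+4}$ (this is exactly where the hypothesis $\gamma\geq-4$ enters), which is what the paper does by transporting to $J_G$: $\|\mu_\epsilon\|_{C^k_{\epsilon,\gamma+2,\delta}(\{r\leq r_\epsilon\})}=\epsilon^{\gamma+4}\|\mu^-\|_{C^k_{\gamma+2}(\{R\leq\epsilon^{-1}r_\epsilon\})}\leq Cr_\epsilon^{4+\gamma}$. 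Your final numbers happen to agree because $r^2\leq r_\epsilon^2$ on this region, but the mechanism you invoke would yield a strictly better bound and would contradict the remark following the proposition: the $-\mu_\epsilon$ term is precisely the \emph{non-cancelling} error coming from the Joyce metric being Ricci-flat rather than a soliton, and it is the balancing of this error against the cutoff error that forces $r_\epsilon=\epsilon^{n/(n+1)}$.

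A secondary imprecision: the weight $w_{\epsilon,\gamma+2}$ is comparable to $r^{\gamma+2}$ on $\{2\epsilon\leq r\leq 2r_\epsilon\}$, hence only bounded \emph{above} by $Cr_\epsilon^{\gamma+2}$ when $\gamma+2>0$; it is not comparable to $r_\epsilon^{\gamma+2}$ throughout. Your reduction to an unweighted $O(r_\epsilon^2)$ bound therefore works for $\gamma>-2$ (in particular for the range $\gamma\in(0,2n-2)$ actually used later), but not for the full stated range $\gamma\geq-4$, which is only reached by keeping the weight as $r^{\gamma+2}$ and using the pointwise bound $|f_\epsilon|\lesssim r^2$ as above.
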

\begin{proof}[Proof of Corollary \ref{coro:T0}]
  Observe that the bound in Proposition \ref{zeroth-order} reads as
  \[ \epsilon^{\gamma+2} \sup_{R\leq 2\epsilon^{-1}r_\epsilon} w^-_{\gamma+2+k}|\nabla^k f_\epsilon|_{\omega^-} \leq C_k \epsilon^{\frac n{n+1}(4+\gamma)} \]
  that is
  \[ \sup |\nabla^kf_\epsilon|_{\omega^-} \leq C_k ( w^-_{\gamma+2+k})^{-1}\epsilon^{\frac{2n-2-\gamma}{n+1}}\]
  for $\epsilon$ small enough. Expanding $\nabla^k(e^{f_\epsilon}-1)$ in terms of the derivatives of $f_\epsilon$ now gives the same bounds on $e^{f_\epsilon}-1$.
\end{proof}

\begin{proof}[Proof of Proposition \ref{zeroth-order}]
  Since $f_\epsilon$ vanishes for $r\geq 2r_\epsilon$, we only need to bound $f_\epsilon$ for $r\leq 2r_\epsilon$. We will bound independently the two terms $\log \frac{\omega_\epsilon^n}{\Omega\wedge\overline\Omega}$ and $\mu_\epsilon$.

  The first term is nonzero only on the transition region $r_\epsilon\leq r\leq 2r_\epsilon$. From the bounds \eqref{eq:22}, we get that on this region
  \[
  \nabla^k\left( \frac{\omega_\epsilon^n}{\Omega\wedge\overline\Omega}\right)
  =O(r_\epsilon^{2-k}),
  \]
and so
  \[ \sup_{r_\epsilon\leq r\leq 2r_\epsilon} r^{2+\gamma+k} \big| \nabla^k\log \frac{\omega_\epsilon^n}{\Omega\wedge\overline\Omega} \big|
    \leq c_k r_\epsilon^{4+\gamma} \]
  which is exactly the required bound.

 By (\ref{mu-epsilon}), the second term $\mu_\epsilon$ coincides with $-\frac14 X\Psi_\epsilon$ on the region where the potential $\Psi_\epsilon$ exists, that is $r\geq\epsilon$; and with $\epsilon^2p_\epsilon^*\mu^-$ on $r\leq r_\epsilon$. From the estimate \eqref{eq:22} we see that on the transition region $r_\epsilon\leq r\leq2r_\epsilon$ we have
 \[ \nabla^k\mu_\epsilon = O(r_\epsilon^{2-k}), \]
and so 
  \[ \sup_{r_\epsilon\leq r\leq 2r_\epsilon} r^{2+\gamma+k} \big| \nabla^k\mu_\epsilon \big|
  \leq c_k r_\epsilon^{4+\gamma}, \]
  the desired estimate.    Finally, on $r\leq r_\epsilon$ it is easier to write the required estimate in terms of norms on the manifold $J_G$:
  \begin{align*}
\|\mu_\epsilon\|_{C^k_{\epsilon,\gamma+2,\delta}(\{r\leq r_\epsilon\})}
&=\epsilon^{\gamma+2}\|(p_\epsilon^{-1})^*\mu_\epsilon\|_{C^k_{\gamma+2}(\{R\leq \epsilon^{-1}r_\epsilon\})}\\
&=\epsilon^{\gamma+4}\|\mu^-\|_{C^k_{\gamma+2}(\{R\leq \epsilon^{-1}r_\epsilon\})}\\
&\leq Cr_\epsilon^{4+\gamma}
\end{align*}
so long as $\gamma\geq-4$.
\end{proof}

\begin{remark}
  The choice of $r_\epsilon$ was made to get an optimal bound on $f_\epsilon$: the two errors coming from cutting off the Cao soliton, and from gluing the Joyce manifold (which is Ricci-flat, not a soliton), are of the same order around $r_\epsilon$. For example, for $n=2$ we have $r_\epsilon=\epsilon^{\frac23}$. A different choice (such as $r_\epsilon=\epsilon^{\frac12}$, which means taking a bigger $r_\epsilon$) would give a larger error term; it would then be necessary to correct the Joyce metric as in \cite[§~2.4]{BM11} before gluing, but we do not need to do this with our choice of $r_\epsilon$.
\end{remark}

\section{First-order term:  Blowup argument}\label{first-order-sec}

The linearization of the operator $T_\epsilon$ defined in \eqref{eq:19} is $\frac14(\Delta_\epsilon-e^{f_\epsilon}X)$. We first study the simpler operator $\Delta_\epsilon-X$, then, in Corollary \ref{also-valid}, use the control on $e^{f_\epsilon}$ from Corollary \ref{coro:T0} to extend the results to $\Delta_\epsilon-e^{f_\epsilon}X$.

\begin{proposition}\label{first-order-est}
  For each $0<\delta<1$, each $0<\gamma<2n-2$, and each $0<\alpha<1$,  there exists a constant $c$, such that for all $\epsilon$ sufficiently small, for all $r_0\in [1,\infty]$, and for all $u\in C^{2}_{\epsilon,\gamma,\delta}(\{r\leq r_0\})$, with the Dirichlet boundary condition $u|_{r=r_0}=0$ if $r_0<\infty$,
\begin{equation*}
  \| u \|_{C^0_{\epsilon,\gamma,\delta}(r\leq r_0)} \leq c \| (\Delta_\epsilon-X)u \|_{C^0_{\epsilon,\gamma+2,\delta}(r\leq r_0)}.
\end{equation*}
\end{proposition}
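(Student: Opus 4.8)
The plan is to argue by contradiction, using a blowup (compactness) argument to reduce the uniform estimate to the non-uniform estimates already established on the Cao quotient (Theorem \ref{th:isoCao}, via the $C^0$ estimate (\ref{eq:28})) and on the Joyce manifold (Proposition \ref{invert-joyce}). Suppose the estimate fails: then there is a sequence $\epsilon_j\to 0$, radii $r_0^{(j)}\in[1,\infty]$, and functions $u_j\in C^2_{\epsilon_j,\gamma,\delta}(\{r\leq r_0^{(j)}\})$ with $u_j|_{r=r_0^{(j)}}=0$, normalized so that $\|u_j\|_{C^0_{\epsilon_j,\gamma,\delta}}=1$ while $\|(\Delta_{\epsilon_j}-X)u_j\|_{C^0_{\epsilon_j,\gamma+2,\delta}}\to 0$. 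Pick points $x_j$ where the weighted sup $w_{\epsilon_j}|u_j|$ is (nearly) attained, say $w_{\epsilon_j}(x_j)|u_j(x_j)|\geq \tfrac12$. The key dichotomy is the behaviour of the rescaled radial coordinate: either $R(x_j)=r(x_j)/\epsilon_j$ stays bounded (the point escapes into the Joyce region $r\lesssim r_{\epsilon_j}$), or $r(x_j)$ stays bounded away from $0$ (the point stays in the Cao region), or there is an intermediate/transition behaviour. In each case I would extract a nonzero limiting solution on a limiting model space and contradict the known injectivity there.

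Concretely: (i) \emph{Cao region.} If, after passing to a subsequence, $r(x_j)$ is bounded below, then by the uniform Schauder estimate (Proposition \ref{eq:26}) together with a diagonal argument the $u_j$ converge in $C^2_{loc}$ on $C_G$ (or on $\Cb^n$) to a nonzero $u_\infty\in C^0_{\gamma,\delta}$ solving $(\Delta_0-X)u_\infty=0$; if the boundaries $r_0^{(j)}$ stay bounded one keeps the Dirichlet condition in the limit, and if $r_0^{(j)}\to\infty$ one gets a solution decaying like $w_0^{-1}$ at infinity. Either way the estimate (\ref{eq:28}) (equivalently Theorem \ref{th:isoCao}) forces $u_\infty=0$, contradicting $w_0(x_\infty)|u_\infty(x_\infty)|\geq\tfrac12$. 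The subtle point here is to check that the weighted sup of $u_\infty$ is actually attained at a finite point, i.e.\ that no mass escapes to $r\to0$ or $r\to\infty$; near $r=0$ this is handled because $\gamma<2n-2$ lies below the critical weight (the limit extends across the orbifold point, as in Lemma \ref{eq:14}), and near $r=\infty$ by the exponential weight $e^{\delta\varphi}$ as in the injectivity proof of Theorem \ref{th:analysis-cao}. (ii) \emph{Joyce region.} If instead $R(x_j)=r(x_j)/\epsilon_j$ stays bounded, transport everything to $(J_G,\omega^-)$ via $p_{\epsilon_j}$: the rescaled functions $\tilde u_j:=\epsilon_j^\gamma p_{\epsilon_j*}((1-\chi)u_j)$ have unit weighted $C^0_\gamma(J_G)$-norm, and since on this region $X=-2r\partial_r$ has operator norm $O(r_{\epsilon_j}^2)\to0$ on the weighted spaces (as shown in the proof of Lemma \ref{lem:bounded}), the limit $\tilde u_\infty\in C^0_\gamma(J_G)$ solves $\Delta\tilde u_\infty=0$. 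With $0<\gamma<2n-2$ below the critical weight $2n-2$, Proposition \ref{invert-joyce} (injectivity of $\Delta$ on $C^{2,\alpha}_\gamma(J_G)$) gives $\tilde u_\infty=0$, again a contradiction. (iii) \emph{Transition region.} If $r(x_j)\to0$ but $r(x_j)/\epsilon_j\to\infty$, rescale by $r(x_j)$ instead; the limiting model is flat $\Cb^n/G$ (or $\Cb^n$), $X$ scales away, and one gets a bounded harmonic function on $\Cb^n\setminus\{0\}$ of growth rate strictly between the critical weights $0$ and $2-2n$, hence zero — contradiction once more.

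The main obstacle, as usual in such blowup arguments, is the bookkeeping of \emph{where the weighted supremum concentrates} and ensuring the limiting object is both nonzero and lands in the function space for which injectivity is known — in particular ruling out escape of the peak to the ``ends'' of each model (the orbifold point $r\to0$, spatial infinity $r\to\infty$, and the interface $r\sim r_\epsilon$). This is controlled precisely by the hypothesis $0<\gamma<2n-2$ (avoiding the critical weights of $\Delta$ at the cone point on both the Cao quotient and the Joyce side) and $0<\delta<1$ (the barrier computation $4\delta(1-\delta)n<-e^{\delta\varphi}(\Delta-X)e^{-\delta\varphi}$ from the previous section, which both gives the needed decay and prevents concentration at infinity). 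Once concentration is localized to one of the three model regimes, the contradiction follows from Theorems \ref{th:analysis-cao}--\ref{th:isoCao} and Proposition \ref{invert-joyce} respectively, all of which are available from the earlier sections. Finally, I would note that the proof simultaneously handles the case $r_0=\infty$ by the same cutoff-exhaustion trick used at the end of the proof of (\ref{eq:28}).
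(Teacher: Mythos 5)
Your proposal is correct and follows essentially the same route as the paper: a contradiction/blowup argument with the peak first confined to $\{r\leq 1\}$ via the barrier estimate (\ref{eq:28}), followed by the three-scale case analysis (Cao region, Joyce region via $p_\epsilon$, and the intermediate flat cone $\Cb^n/G$), with the contradiction in each regime supplied by Theorem \ref{th:isoCao} (or Lemma \ref{eq:14} for finite $r_0$), Proposition \ref{invert-joyce}, and the non-criticality of $\gamma\in(0,2n-2)$ for the cone Laplacian. The only cosmetic difference is that the paper applies the barrier estimate to the sequence $u_i$ up front to pin the peak points in $\{r\leq1\}$, rather than arguing about escape of mass in the limit.
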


\begin{proof}
We prove this by contradiction.  Suppose this false; then there exist a sequence $\epsilon_i\to 0$, a sequence $r_i\in[1,\infty]$, and a sequence of functions $u_i$ defined on $\{r\leq r_i\}$, satisfying $u_i|_{r=r_i}=0$, with
\begin{equation}
\| (\Delta-X)u_i \|_{C^0_{\epsilon_i,\gamma+2,\delta}}\to 0,\label{eq:32}
\end{equation}
but for all $i$,
\begin{equation*}
\| u_i \|_{C^0_{\epsilon_i,\gamma,\delta}}=1.\label{eq:31}
\end{equation*}

    The first observation is that by (\ref{eq:28}), together with the fact that for $r\geq 1$ the  weight $w$ used for the Cao metric coincides with the weights $w_{\epsilon,\gamma}$, $w_{\epsilon,\gamma+2}$ used to define the norms $C^0_{\epsilon,\gamma,\delta}$ and $C^0_{\epsilon,\gamma+2,\delta}$,
\[
        \sup_{1\leq r\leq r_i} w_{\epsilon,\gamma} |u_i| \leq \max\left[ \sup_{r= 1} w_{\epsilon,\gamma} |u_i|, \ \frac1{4\delta(1-\delta)n} \sup_{1\leq r\leq r_i} w_{\epsilon,\gamma+2} |(\Delta-X)u_i| \right].
\]  
Since by hypothesis $\sup w_{\epsilon,\gamma+2} |(\Delta-X)u_i|\rightarrow0$, we see that there exist points $z_i$ such that:
\begin{align*}
  |z_i|&\leq1 \\
  w_{\epsilon,\gamma}(z_i) |u_i(z_i)|&=1.
\end{align*}
We will now use a blowup argument at $z_i$. Up to extracting a subsequence, we can suppose that $r_i\rightarrow r_\infty\in[1,\infty]$.
We distinguish three cases:

{\bf Case 1:}  $0<\inf_{i} |z_i|$.

We can suppose $z_i\rightarrow z_\infty$.
>From local elliptic regularity (including at the boundary $r=r_i$ if it is finite), as in \cite[Theorems 8.32-3]{GT}, we have local $C^{1,\alpha'}$ bounds on $u_i$, so we can extract a subsequence  $C^{1,\alpha}$-converging on each compact subset of  the domain $\{r< r_\infty\}$ of the Cao manifold $C_G$.  Let $u_\infty$ be its limit; since the weights $w_\epsilon$ converge to the weight $w_0$ for $C_G$, we get
\begin{align*}
  (\Delta-X) u_\infty &= 0\text{ (and $u_\infty$ is smooth)}, \\
  \sup w_0 |u_\infty| &= w_0(z_\infty) |u_\infty(z_\infty)| = 1, \\
  u_\infty|_{\{r=r_\infty\}} &= 0 \text{ if }r_\infty<\infty.
\end{align*}
Proposition \ref{schauder-cao-quotient}, the Schauder estimate on the Cao manifold $C_G$, gives that
  \[
  \|u_\infty\|_{C^{2,\alpha}_{\gamma,\delta}(\{r\leq r_\infty\})}
  \leq C\left[\|u_\infty\|_{C^{0}_{\gamma,\delta}(\{r\leq r_\infty\})}+\|(\Delta-X)u_\infty\|_{C^{\alpha}_{\gamma+2,\delta}(\{r\leq r_\infty\})}\right]=C\sup_{\{r\leq r_\infty\}} w_0 |u_\infty|<\infty.
  \]
  So 
  $u_\infty\in C^{2,\alpha}_{\gamma,\delta}(\{r\leq r_\infty\})$ is a nonzero element of the kernel of $\Delta-X$.  If $r_\infty=\infty$, this  contradicts Theorem \ref{th:isoCao}.  If $r_\infty<\infty$, noting that $u_\infty$ also satisfies the Dirichlet boundary condition, this contradicts   Lemma \ref{eq:14}.

{\bf Case 2:}  $\sup_{i} \epsilon_i^{-1} |z_i| < \infty$.

Let $\zeta_i:=p_{\epsilon_i}(z_i)\in J_G$; we can suppose that $\zeta_i\rightarrow\zeta_\infty\in J_G$.

Let $v_i:=\epsilon_i^\gamma (p_{\epsilon_i})_* u_i$. By  definition the weight $w_{\epsilon,\gamma,\delta}$ is equal to the Joyce weight $\epsilon^\gamma p_\epsilon^*w^-_\gamma$ on the larger and larger domains $\{R\leq \epsilon^{-1}r_\epsilon\}$ of $J_G$, so
\begin{equation}
 \sup_{\{R\leq \epsilon^{-1}r_\epsilon\}} w^-_\gamma |v_i| = w^-_\gamma(\zeta_i) |v_i(\zeta_i)| = 1.\label{eq:33}
\end{equation}
On the other hand,
since $\epsilon^{-2}\omega_\epsilon=\omega^-$ for $R\leq \epsilon^{-1}r_\epsilon$, the control \eqref{eq:32} translates into 
\begin{equation}
  \sup_{\{R\leq \epsilon^{-1}r_\epsilon\}} w^-_{\gamma+2} |(\Delta_{\omega^-}-\epsilon^2X) v_i|
  =\sup_{\{r\leq r_\epsilon\}} w_{\epsilon,\gamma+2,\delta} |(\Delta_{\omega_\epsilon}-X) u_i|\longrightarrow 0.\label{eq:34}
\end{equation}
The operators $\Delta_{\omega_-}-\epsilon^2X$ are uniformly locally controlled, so by local elliptic regularity  \cite[Theorems 8.32-3]{GT}, as in Case 1, we can extract a subsequence $C^{1,\alpha}$-converging on each  compact domain of $J_G$.  Let  $v_\infty$ be its limit.  By \eqref{eq:33} and \eqref{eq:34}, 
\[  \Delta_{\omega^-}v_\infty=0, \qquad\sup w_\gamma^- |v_\infty| = 1. \]
We conclude as before, using Proposition \ref{schauder-joyce} (the Schauder estimate on the Joyce manifold) and Proposition \ref{invert-joyce} (the proof of isomorphism).

{\bf Case 3:} We may pass to a subsequence with $z_i\rightarrow0$ and $\epsilon_i^{-1} |z_i|\rightarrow \infty$.

In this case we perform a similar extraction on the cone $\Cb^n/G$ itself, by scaling by $\lambda_i=|z_i|$. As in Case 2, we consider a sequence of homotheties which restrict the points to a compact region.  In this case we take the homotheties $p_{\lambda_i}$, and introduce rescaled functions $v_i:=\lambda_i^\gamma (p_{\lambda_i})_* u_i$. We can suppose $\zeta_i=\frac{z_i}{\lambda_i}\rightarrow\zeta_\infty\in\Cb^n/G$. Finally, now $\lambda_i^{-2}(p_{\lambda_i})_*\omega_{\epsilon_i}$ converges to the flat cone metric on $\Cb^n/G$, and the weights $\lambda_i^{\gamma}p_{\lambda_i}^*w_{\epsilon_i,\gamma,\delta}$ converge to the cone weight $w_\gamma(R)=R^\gamma$, in both instances because $1\gg\lambda_i \gg \epsilon_i$.

The next part of the proof is unchanged from Case 2: the operators
$\Delta_{\lambda_i^{-2}(p_{\lambda_i})_*\omega_{\epsilon_i}}-\lambda_i^2X$
are uniformly locally controlled, and we get a nonzero solution $v_\infty$ of $\Delta v_\infty=0$ on the whole flat cone $\Cb^n/G$, with $\sup_{\Cb^n/G} R^\gamma|v_\infty| < \infty$.

Finally, since $\gamma\in(0,2n-2)$ is not a critical weight of the Laplacian, $\Delta:C^{2,\alpha}_\gamma(\Cb^n/G)\rightarrow C^\alpha_{\gamma+2}(\Cb^n/G)$ is an isomorphism and such a $v_\infty$ cannot exist.
\end{proof}

\begin{proposition} \label{first-order}
  For each $0<\delta<1$, each $0<\gamma<2n-2$, and each $0<\alpha<1$,  there exists a constant $c$, such that for all $\epsilon$ sufficiently small, the operator $\Delta_\epsilon-X:C^{2,\alpha}_{\epsilon,\gamma,\delta}(M_G)\rightarrow C^\alpha_{\epsilon,\gamma+2,\delta}(M_G)$ is an isomorphism, and for all $u\in C^{2,\alpha}_{\epsilon,\gamma,\delta}(M_G)$, 
  \begin{equation}
 \| u \|_{C^{2,\alpha}_{\epsilon,\gamma,\delta}} \leq c \| (\Delta_\epsilon-X)u \|_{C^{0,\alpha}_{\epsilon,\gamma+2,\delta}}.\label{eq:24}
\end{equation}
\end{proposition}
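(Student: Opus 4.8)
The plan is to obtain the a priori estimate \eqref{eq:24} by combining the two bounds already in hand — the lower-order-free $C^0$ estimate of Proposition~\ref{first-order-est} and the uniform Schauder estimate of Proposition~\ref{eq:26} — and then to deduce that $\Delta_\epsilon-X$ is an isomorphism by an exhaustion argument modelled on the proof of Theorem~\ref{th:analysis-cao}. First I would apply Proposition~\ref{first-order-est} with $r_0=\infty$ (no boundary condition, domain all of $M_G$), which for $\epsilon$ small gives
\[
\|u\|_{C^0_{\epsilon,\gamma,\delta}}\le c\,\|(\Delta_\epsilon-X)u\|_{C^0_{\epsilon,\gamma+2,\delta}}\le c\,\|(\Delta_\epsilon-X)u\|_{C^{0,\alpha}_{\epsilon,\gamma+2,\delta}}.
\]
Feeding this into Proposition~\ref{eq:26} absorbs the $C^0$ term on its right-hand side and yields \eqref{eq:24}, with a constant $c$ independent of the (small) parameter $\epsilon$. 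Injectivity of $\Delta_\epsilon-X$ is then immediate from \eqref{eq:24}.

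For surjectivity, fix $g\in C^\alpha_{\epsilon,\gamma+2,\delta}(M_G)$ and choose an increasing sequence of regular values $r_j\to\infty$. On each compact domain $\{r\le r_j\}\subseteq M_G$ — which already contains the whole (smooth) resolution region — the Dirichlet problem $(\Delta_\epsilon-X)u_j=g$, $u_j|_{r=r_j}=0$, is uniquely solvable, since $\Delta_\epsilon-X$ has no zeroth-order term, exactly as in the proof of Theorem~\ref{th:analysis-cao}. Proposition~\ref{first-order-est} then gives, uniformly in $j$,
\[
\|u_j\|_{C^0_{\epsilon,\gamma,\delta}(\{r\le r_j\})}\le c\,\|g\|_{C^0_{\epsilon,\gamma+2,\delta}}\le c\,\|g\|_{C^{0,\alpha}_{\epsilon,\gamma+2,\delta}}.
\]
Interior Schauder estimates (or the local elliptic estimates of \cite{GT}) then provide uniform $C^{2,\alpha'}$ bounds on the $u_j$ over each fixed compact subset of $M_G$; a diagonal extraction produces a subsequence converging in $C^2_{\mathrm{loc}}$ to a function $u$ on all of $M_G$ with $(\Delta_\epsilon-X)u=g$. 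Passing to the pointwise limit in the displayed bound (the weight $w_\epsilon$ being continuous) gives $\|u\|_{C^0_{\epsilon,\gamma,\delta}}\le c\,\|g\|_{C^{0,\alpha}_{\epsilon,\gamma+2,\delta}}<\infty$, and since $(\Delta_\epsilon-X)u=g\in C^\alpha_{\epsilon,\gamma+2,\delta}$, Proposition~\ref{eq:26} upgrades $u$ to an element of $C^{2,\alpha}_{\epsilon,\gamma,\delta}(M_G)$ satisfying \eqref{eq:24}. Thus $\Delta_\epsilon-X$ is onto, hence an isomorphism.

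The substantive work here — controlling the linearization \emph{uniformly} as $\epsilon\to0$ — is entirely contained in the blowup argument of Proposition~\ref{first-order-est}, so no new difficulty arises at this stage; the only points that need genuine care are the solvability of the approximating Dirichlet problems and the verification that the limit $u$ really lies in the doubly-weighted space $C^{2,\alpha}_{\epsilon,\gamma,\delta}$, both of which are handled by the estimates already available. An alternative to the exhaustion argument would be to glue the inverses on $C_G$ (Theorem~\ref{th:isoCao}) and on $J_G$ (Proposition~\ref{invert-joyce}), by means of the cutoff functions of Section~\ref{sec:appr-solut-monge}, into an approximate inverse $P_\epsilon$ for $\Delta_\epsilon-X$, and then invert $(\Delta_\epsilon-X)P_\epsilon=\mathrm{Id}+O(r_\epsilon^2)$ for $\epsilon$ small; either route establishes the isomorphism.
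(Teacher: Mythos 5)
Your proposal is correct and follows essentially the same route as the paper: the a priori estimate \eqref{eq:24} and injectivity come from combining the $C^0$ bound of Proposition~\ref{first-order-est} (with $r_0=\infty$) with the uniform Schauder estimate of Proposition~\ref{eq:26}, and surjectivity is obtained by exactly the same exhaustion-by-Dirichlet-problems argument, uniform bounds, and $C^2_{\mathrm{loc}}$ extraction that the authors use. The alternative parametrix construction you sketch at the end is not the paper's route, but the main argument matches.
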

\begin{proof}
First observe that the estimate \eqref{eq:24}, and the injectivity of the operator, follow from the $C^0$ estimate in Proposition \ref{first-order-est}, combined with the uniform Schauder estimate Proposition \ref{eq:26}.

We now show the surjectivity of $\Delta-X$. This is the same argument as in Theorem \ref{th:analysis-cao}.  We want to solve $(\Delta-X)u=v$. On each of a sequnce of bounded domains $\{t\leq t_i\}$, one can solve $(\Delta-X)u_{i}=v$ with $u_{i}|_{r=r_i}=0$, since the operator $\Delta-X$, being a compact perturbation of $\Delta$, has index zero for the Dirichlet problem, and is injective (by 
the maximum principle).

By  Proposition \ref{first-order-est}, together with the Schauder estimate Proposition \ref{schauder-cao}, we have that for all $t_0$ and all $i$ such that $t_i\geq t_0+1$,
\[
\|u_i\|_{C^{2,\alpha}_{\epsilon,\gamma,\delta}(\{t\leq t_0\})}
  \leq  C \|v\|_{C^{\alpha}_{\epsilon,\gamma+2,\delta}(M_G)},
  \]
  where the constant $C$ is independent both of $t_0$ and of $i$.
  Therefore when $i\rightarrow\infty$ we can extract a $C^{2}_{loc}$-limit $u$ still satisfying $\| u \|_{C^{2,\alpha}_{\epsilon,\gamma,\delta}} \leq c \| v \|_{C^0_{\epsilon,\gamma+2,\delta}}$, and solving $(\Delta-X)u=v$.
\end{proof}

\begin{corollary}\label{also-valid}
  Proposition \ref{first-order} is also valid for the family of operators $\Delta_\epsilon-e^{f_\epsilon}X$.
\end{corollary}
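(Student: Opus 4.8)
The plan is to regard $\Delta_\epsilon - e^{f_\epsilon}X$ as a perturbation of $\Delta_\epsilon - X$ and to transfer the conclusions of Proposition \ref{first-order} by a Neumann series argument. Write
\[
\Delta_\epsilon - e^{f_\epsilon}X \;=\; (\Delta_\epsilon - X) \;-\; (e^{f_\epsilon}-1)\,X \;=:\; (\Delta_\epsilon - X) - E_\epsilon .
\]
The key structural fact is that $f_\epsilon$, and hence $e^{f_\epsilon}-1$, vanishes for $r\ge 2r_\epsilon$ (by \eqref{soliton-eq}, as recorded after \eqref{eq:20}), so $E_\epsilon u$ is supported in the region $\{r\le 2r_\epsilon\}$, which under the blowup map $p_\epsilon$ corresponds to the Joyce region $\{R\le 2\epsilon^{-1}r_\epsilon\}$.

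The main step is to show that $E_\epsilon\colon C^{2,\alpha}_{\epsilon,\gamma,\delta}(M_G)\to C^\alpha_{\epsilon,\gamma+2,\delta}(M_G)$ has operator norm $O(r_\epsilon^2)$, hence tends to $0$. This combines two facts already in hand. First, by the computation in the proofs of Lemma \ref{lem:bounded} and Proposition \ref{eq:26}, on the region $\{R\le 2\epsilon^{-1}r_\epsilon\}$ the vector field $X$ has small norm as an operator between the glued spaces, namely $\|Xu\|_{C^\alpha_{\epsilon,\gamma+2,\delta}}\le c\,r_\epsilon^2\,\|u\|_{C^{2,\alpha}_{\epsilon,\gamma,\delta}}$, the factor $r_\epsilon^2$ coming from $R^2\le \epsilon^{-2}r_\epsilon^2$ together with the scaling built into the definition of the glued norms. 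Second, multiplication by $e^{f_\epsilon}-1$ is bounded on $C^\alpha_{\epsilon,\gamma+2,\delta}(M_G)$ uniformly in $\epsilon$: applying Corollary \ref{coro:T0} with the parameter there taken to be $-2$ gives $\|e^{f_\epsilon}-1\|_{C^{k}_{\epsilon,0,\delta}}\le C\,r_\epsilon^{2}$ for every $k$ (the weight $w_{\epsilon,0,\delta}$ being bounded above and below on the relevant region), and after an interpolation to recover the Hölder seminorm this controls the norm of the corresponding multiplication operator on the weighted Hölder spaces. Composing these two estimates gives $\|E_\epsilon\|=O(r_\epsilon^2)$.

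To finish, recall from Proposition \ref{first-order} that $\Delta_\epsilon - X$ is invertible with $\|(\Delta_\epsilon-X)^{-1}\|\le c$ uniformly in $\epsilon$. For $\epsilon$ small enough that $\|E_\epsilon\|\le \tfrac{1}{2c}$, the operator $\mathrm{Id}-(\Delta_\epsilon-X)^{-1}E_\epsilon$ on $C^{2,\alpha}_{\epsilon,\gamma,\delta}(M_G)$ is invertible by a convergent Neumann series with inverse of norm $\le 2$, so
\[
\Delta_\epsilon - e^{f_\epsilon}X \;=\; (\Delta_\epsilon - X)\bigl(\mathrm{Id}-(\Delta_\epsilon-X)^{-1}E_\epsilon\bigr)
\]
is an isomorphism $C^{2,\alpha}_{\epsilon,\gamma,\delta}(M_G)\to C^\alpha_{\epsilon,\gamma+2,\delta}(M_G)$ with inverse bounded by $2c$, which yields the analogue of \eqref{eq:24} with $c$ replaced by $2c$. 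The only point needing genuine care is the multiplier bound for $e^{f_\epsilon}-1$ in the glued norms of Section \ref{glued-spaces}; but this is the same kind of weight-and-rescaling bookkeeping already performed in the proofs cited above, so I do not expect a serious obstacle.
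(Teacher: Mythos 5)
Your proposal is correct and follows essentially the same route as the paper: the authors likewise reduce to showing that the operator $(e^{f_\epsilon}-1)X : C^{2,\alpha}_{\epsilon,\gamma,\delta}(M_G)\to C^\alpha_{\epsilon,\gamma+2,\delta}(M_G)$ has norm tending to zero (which they deduce from Corollary \ref{coro:T0}), so that the invertibility and the bound on the inverse from Proposition \ref{first-order} persist under the perturbation. Your Neumann-series formulation and the explicit smallness bookkeeping simply spell out what the paper leaves as a one-line remark.
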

\begin{proof}
  It suffices to check that the norm of the operator
  \[ (e^{f_\epsilon}-1)X : C^{2,\alpha}_{\epsilon,\gamma,\delta}(M_G)\longrightarrow  C^\alpha_{\epsilon,\gamma+2,\delta}(M_G) \]
  goes to zero, so that adding it to $\Delta_\epsilon-X$ does not perturb the invertibility and the estimate on the inverse.  This follows easily from Corollary \ref{coro:T0}.
\end{proof}

\section{Second-order term}

Let us write \[ T_\epsilon(\Psi) = T_\epsilon(0) + L_\epsilon(\Psi) + Q_\epsilon(\Psi), \]
where $L_\epsilon=\tfrac{1}{4}(\Delta_\epsilon-e^{f_\epsilon}X)$ is the linearization of $T_\epsilon$ at the origin.
\begin{proposition}\label{Q-bound}
  There exists constants $c,c'>0$ such that if $\|\Psi\|_{C^{2,\alpha}_{\epsilon,\gamma,\delta}}, \|\Psi'\|_{C^{2,\alpha}_{\epsilon,\gamma,\delta}}\leq c \epsilon^{2+\gamma}$, then
  \[ \|Q_\epsilon(\Psi)-Q_\epsilon(\Psi')\|_{C^\alpha_{\epsilon,\gamma+2,\delta}} \leq c' \epsilon^{-2-\gamma} \|\psi-\psi'\|_{C^{2,\alpha}_{\epsilon,\gamma,\delta}} (\|\psi\|_{C^{2,\alpha}_{\epsilon,\gamma,\delta}}+\|\psi'\|_{C^{2,\alpha}_{\epsilon,\gamma,\delta}}) . \]
\end{proposition}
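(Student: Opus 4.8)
The plan is to exploit that $Q_\epsilon$ vanishes to second order at the origin (since $L_\epsilon$ is by definition $DT_\epsilon(0)$, so $DQ_\epsilon(0)=0$) and to estimate the Hessian $D^2T_\epsilon$ uniformly on the ball of radius $c\epsilon^{2+\gamma}$. First I would write, with $\Psi_s:=\Psi'+s(\Psi-\Psi')$,
\[
  Q_\epsilon(\Psi)-Q_\epsilon(\Psi')=\int_0^1\!\!\int_0^1 D^2T_\epsilon\big(\tau\Psi_s\big)\big(\Psi_s,\Psi-\Psi'\big)\,d\tau\,ds ,
\]
so that, using $\|\Psi_s\|_{C^{2,\alpha}_{\epsilon,\gamma,\delta}}\le\|\Psi\|_{C^{2,\alpha}_{\epsilon,\gamma,\delta}}+\|\Psi'\|_{C^{2,\alpha}_{\epsilon,\gamma,\delta}}$, the statement reduces to the bilinear bound
\[
  \big\|D^2T_\epsilon(\Phi)(\eta,\xi)\big\|_{C^\alpha_{\epsilon,\gamma+2,\delta}}\le C\,\epsilon^{-2-\gamma}\,\|\eta\|_{C^{2,\alpha}_{\epsilon,\gamma,\delta}}\|\xi\|_{C^{2,\alpha}_{\epsilon,\gamma,\delta}} ,
\]
valid for all $\eta,\xi$ and all $\Phi$ with $\|\Phi\|_{C^{2,\alpha}_{\epsilon,\gamma,\delta}}\le c\epsilon^{2+\gamma}$, with $C$ independent of $\epsilon$ and of $\Phi$.

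To prove this I would split $T_\epsilon$ as in \eqref{eq:19} into the Monge--Ampère part $\Phi\mapsto\frac{(\omega_\epsilon+\frac i2\ddb\Phi)^n}{\omega_\epsilon^n}=\det(\mathrm{Id}+A_\Phi)$, where $A_\Phi$ denotes $\tfrac i2\ddb\Phi$ regarded via $\omega_\epsilon$ as a Hermitian endomorphism, and the exponential part $\Phi\mapsto e^{\frac14 X\Phi+f_\epsilon}$. The role of the smallness hypothesis is that the glued weights satisfy $\inf_{M_G}w_{\epsilon,\gamma+2,\delta}\sim\epsilon^{2+\gamma}$, the infimum being attained on the Joyce core where $\sigma_\epsilon\sim\epsilon$ and $w_{\epsilon,\gamma,\delta}\sim\epsilon^\gamma$ (elsewhere $w_{\epsilon,\gamma+2,\delta}$ is bounded below by a larger quantity, e.g.\ $\sim r_\epsilon^{2+\gamma}\gg\epsilon^{2+\gamma}$). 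Hence $|A_\Phi|_{\omega_\epsilon}\lesssim(\inf w_{\epsilon,\gamma+2,\delta})^{-1}\|\Phi\|_{C^{2,\alpha}_{\epsilon,\gamma,\delta}}\lesssim c$, and by Lemma \ref{lem:bounded} and Corollary \ref{coro:T0} the unweighted norms $\|X\Phi\|_{C^\alpha}$ and $\|f_\epsilon\|_{C^\alpha}$ are also $\lesssim c$; so for $c$ small, along the whole segment $\tau\Psi_s$ the endomorphism $A$ and the exponent $\tfrac14 X\Phi+f_\epsilon$ stay in a fixed compact set on which $\det(\mathrm{Id}+A)$ and $\exp$ are analytic with uniformly bounded derivatives. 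Differentiating twice, $D^2$ of the Monge--Ampère part is a universal polynomial in $A_\Phi$ (of degree $\le n-2$, hence uniformly $C^\alpha$-bounded) contracted with the bilinear expression $A_\eta\otimes A_\xi$, while $D^2$ of the exponential part is $\tfrac1{16}e^{\frac14 X\Phi+f_\epsilon}(X\eta)(X\xi)$.

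It then suffices to invoke the elementary multiplication property of the weighted Hölder spaces, $\|uv\|_{C^\alpha_{\epsilon,\gamma+2,\delta}}\le C(\inf w_{\epsilon,\gamma+2,\delta})^{-1}\|u\|_{C^\alpha_{\epsilon,\gamma+2,\delta}}\|v\|_{C^\alpha_{\epsilon,\gamma+2,\delta}}$, together with the facts that multiplication by a uniformly $C^\alpha$-bounded (unweighted) function preserves $C^\alpha_{\epsilon,\gamma+2,\delta}$, that $\|A_\eta\|_{C^\alpha_{\epsilon,\gamma+2,\delta}}\lesssim\|\eta\|_{C^{2,\alpha}_{\epsilon,\gamma,\delta}}$ (because $w_{\epsilon,\gamma+2,\delta}\sim\sigma_\epsilon^2 w_{\epsilon,\gamma,\delta}$, as in the proof of Proposition \ref{schauder-cao-quotient}), and that $\|X\eta\|_{C^\alpha_{\epsilon,\gamma+2,\delta}}\lesssim\|\eta\|_{C^{2,\alpha}_{\epsilon,\gamma,\delta}}$ (Lemma \ref{lem:bounded}). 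Applying these to both terms gives the bilinear bound with the factor $(\inf w_{\epsilon,\gamma+2,\delta})^{-1}\sim\epsilon^{-2-\gamma}$, and hence the proposition. I expect the only real work to be the weight bookkeeping: pinning down that $\inf w_{\epsilon,\gamma+2,\delta}\sim\epsilon^{2+\gamma}$, that this is exactly the loss incurred in multiplying two elements of $C^\alpha_{\epsilon,\gamma+2,\delta}$, and that the smallness radius $c\epsilon^{2+\gamma}$ is precisely the threshold keeping $A_\Phi$ and $\tfrac14 X\Phi$ in the uniform analyticity range of the determinant and the exponential. Beyond this there is no analytic subtlety; in particular positivity of $\omega_\epsilon+\frac i2\ddb(\tau\Psi_s)$ need not be used, since $D^2\det(\mathrm{Id}+A)$ is polynomial.
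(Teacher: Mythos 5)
Your proof is correct, and it reaches the same quantitative conclusion by a genuinely different organization of the argument. The paper works locally: it covers $M_G$ by balls of radius comparable to $\sigma_\epsilon$, invokes the standard unweighted quadratic estimate (``$D^2T$ is bounded on a ball of controlled geometry'') on each, and then does the weight bookkeeping region by region --- on the Cao part the weight is $\geq 1$ so no loss occurs, while on the Joyce core the substitution $\Psi\mapsto\Psi/\epsilon^2$ relative to $\omega^-$ together with the norm equivalence $\|u\|_{C^{k,\alpha}_{\epsilon,\gamma,\delta}}\sim\epsilon^\gamma\|u\|_{C^{k,\alpha}(\omega^-)}$ produces both the smallness threshold $c\epsilon^{2+\gamma}$ and the loss $\epsilon^{-2-\gamma}$. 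You instead globalize: the double-integral formula for $Q_\epsilon(\Psi)-Q_\epsilon(\Psi')$ reduces everything to a bilinear bound on $D^2T_\epsilon$, which you compute explicitly (polynomial in $A_\Phi$ against $A_\eta\otimes A_\xi$, plus $\tfrac1{16}e^{\frac14X\Phi+f_\epsilon}(X\eta)(X\xi)$) and then control by a single weighted multiplication inequality with loss $(\inf w_{\epsilon,\gamma+2,\delta})^{-1}\sim\epsilon^{-2-\gamma}$. Your version has the merit of isolating, in one line, exactly where the $\epsilon^{-2-\gamma}$ comes from and why the radius $c\epsilon^{2+\gamma}$ is the right threshold; the paper's version avoids computing $D^2T_\epsilon$ at all and reuses the ball-by-ball machinery already set up for the Schauder estimates. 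One point to tighten: when you say $A_\Phi$ and $\tfrac14X\Phi+f_\epsilon$ are ``uniformly $C^\alpha$-bounded (unweighted),'' the raw unweighted H\"older seminorm of $\nabla^2\Phi$ on the Joyce core is only $O(\epsilon^{-\alpha})$; the correct uniform statement is boundedness in the $\sigma_\epsilon$-scaled H\"older norm (the norm \eqref{eq:15} with weight $1$), which is what the multiplication property of the spaces $C^\alpha_{\epsilon,\gamma+2,\delta}$ actually requires and what your estimates do deliver.
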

\begin{proof}
  Such an estimate is obvious on a ball, without weights, if the geometry of the metric is uniformly controlled: it comes down to saying that the second derivative $D^2T$ remains bounded on a ball.
 
  We can apply this remark on balls on the Cao part of the manifold ($r\geq1$). The weighted norm on balls is just the usual Hölder norm, multiplied by the factor $w_\epsilon$:
  \[ \|u\|_{C^{k,\alpha}_{\epsilon,\gamma,\delta}(B(x_0,1))} \sim w_\epsilon(x_0) \|u\|_{C^{k,\alpha}(B(x_0,1))}. \]
  Then, for $\|\Psi\|_{C^{2,\alpha}(B(x_0,1))}, \|\Psi'\|_{C^{2,\alpha}(B(x_0,1))}\leq c$ we have the standard estimate
  \[ \|Q_\epsilon(\Psi)-Q_\epsilon(\Psi')\|_{C^\alpha(B(x_0,1))} \leq c' \|\Psi-\Psi'\|_{C^{2,\alpha}(B(x_0,1))} (\|\Psi\|_{C^{2,\alpha}(B(x_0,1))}+\|\Psi'\|_{C^{2,\alpha}(B(x_0,1))}). \]
  Therefore,
  \begin{multline*}
 \|Q_\epsilon(\Psi)-Q_\epsilon(\Psi')\|_{C^\alpha_{\epsilon,\gamma+2,\delta}(B(x_0,1))} \\ \leq c'  w_\epsilon(x_0)^{-1} \|\Psi-\Psi'\|_{C^{2,\alpha}_{\epsilon,\gamma,\delta}(B(x_0,1))} (\|\Psi\|_{C^{2,\alpha}_{\epsilon,\gamma,\delta}(B(x_0,1))}+\|\Psi'\|_{C^{2,\alpha}_{\epsilon,\gamma,\delta}(B(x_0,1))})
\end{multline*}
which gives the required estimate since $w_\epsilon(x_0)\geq1$.

The situation is quite different on the part where we glued the Joyce manifold, because the injectivity radius and the weight become small. First observe that 
\[ T_\epsilon(\Psi) = \frac{(\frac{\omega_\epsilon}{\epsilon^2}+\frac i2\ddb\frac\Psi{\epsilon^2})^n}{(\frac{\omega_\epsilon}{\epsilon^2})^n} - e^{f_\epsilon+\frac{\epsilon^2}4 X\frac \Psi{\epsilon^2}}, \]
and recall that on the Joyce region $r\leq r_\epsilon$, $\frac{\omega_\epsilon}{\epsilon^2}=\omega_-$, the metric on $J_G$, and $f_\epsilon=-\epsilon^2p_\epsilon^*\mu^-$. Therefore on that region
\[ T_\epsilon(\Psi) =
\frac{(\omega_-+\frac i2\ddb\frac\Psi{\epsilon^2})^n}{\omega_-^n}
- e^{\epsilon^2[-\mu^-+\frac{1}4 X\frac \Psi{\epsilon^2}]}. \]
On a large compact part $K\subset J_G$, we have as above, for norms taken with respect to $\omega_-$,
\[ \|Q_\epsilon(\Psi)-Q_\epsilon(\Psi')\|_{C^\alpha} \leq c' \left\lVert\frac \Psi{\epsilon^2}-\frac{\Psi'}{\epsilon^2}\right\rVert_{C^{2,\alpha}} \left(\left\lVert\frac \Psi{\epsilon^2}\right\rVert_{C^{2,\alpha}}+\left\lVert\frac{\Psi'}{\epsilon^2}\right\rVert_{C^{2,\alpha}}\right) \]
for $\|\frac \Psi{\epsilon^2}\|_{C^{2,\alpha}}, \|\frac{\Psi'}{\epsilon^2}\|_{C^{2,\alpha}}\leq c$. But on $K$ we have
\[ \|u\|_{C^{k,\alpha}_{\epsilon,\gamma,\delta}} \sim \epsilon^\gamma \|u\|_{C^{k,\alpha}(\omega_-)}. \]
So we deduce that for $\|\Psi\|_{C^{2,\alpha}_{\epsilon,\gamma,\delta}}, \|\Psi'\|_{C^{2,\alpha}_{\epsilon,\gamma,\delta}}\leq c\epsilon^{2+\gamma}$, we have on $K$ the estimate
  \[ \|Q_\epsilon(\Psi)-Q_\epsilon(\Psi')\|_{C^\alpha_{\epsilon,\gamma+2,\delta}} \leq c' \epsilon^{-2-\gamma} \|\Psi-\Psi'\|_{C^{2,\alpha}_{\epsilon,\gamma,\delta}} (\|\Psi\|_{C^{2,\alpha}_{\epsilon,\gamma,\delta}}+\|\Psi'\|_{C^{2,\alpha}_{\epsilon,\gamma,\delta}}) . \]
  This is again the required estimate.

  The case of the conical region between $K$ and $\{r\geq1\}$ is similar and we will not write the details. The worst constants are obtained on the $K$ part of the manifold, because this is the part of the manifold with smallest injectivity radius and weight. This explains the constants in the statement of the Proposition.
\end{proof}

\section{Inverse function theorem}
\label{sec:inverse-funct-theor}
The proof of Theorem \ref{main} will now follow directly from the following quantitative version of the inverse function theorem, previously used in \cite{BM11}:
\begin{lemma}\label{inverse-function-theorem}
  Let $T:E\to F$ be a smooth map between Banach spaces and define
  $Q:=T-T(0)-DT|_0$.  Suppose that there are positive constants $q$, $r_0$ and $c$, such that
  \begin{enumerate}
  \item $\lVert Q(x)-Q(y)\rVert\leq q\lVert x-y\rVert(\lVert x\rVert+\lVert y\rVert)$ for every $x$ and $y$ in $B_E(0,r_0)$;
  \item $DT|_0$ is an isomorphism with inverse bounded by $c$;
  \item $\lVert T(0)\rVert<\tfrac{1}{2c}\min(r_0,\tfrac{1}{2qc})$.
  \end{enumerate}
  Then the equation $T(x)=0$ admits a unique solution $x$ in $B_E(0,\min(r_0,\tfrac{1}{2qc}))$.
\end{lemma}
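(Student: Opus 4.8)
The plan is to recast the equation $T(x)=0$ as a fixed-point problem and invoke the Banach fixed-point theorem. Since by definition $T=T(0)+DT|_0+Q$ and $DT|_0$ is invertible, $T(x)=0$ is equivalent to $x=N(x)$ with
\[
N(x):=-(DT|_0)^{-1}\bigl(T(0)+Q(x)\bigr),
\]
and one has $Q(0)=0$. I would first record the two estimates that drive everything. For the contraction property, hypotheses (1) and (2) give, for $x,y\in B_E(0,r_0)$,
\[
\|N(x)-N(y)\|\le c\,\|Q(x)-Q(y)\|\le cq\,(\|x\|+\|y\|)\,\|x-y\|,
\]
so on a ball of radius $\rho$ the map $N$ is Lipschitz with constant $\le 2cq\rho$. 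For the self-map property, putting $y=0$ in (1) yields $\|Q(x)\|\le q\|x\|^2$, hence
\[
\|N(x)\|\le c\|T(0)\|+cq\|x\|^2 .
\]

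Next I would set $\rho:=\min(r_0,\tfrac1{2qc})$, so $cq\rho\le\tfrac12$. For $x\in\bar B_E(0,\rho)$ the self-map estimate gives $\|N(x)\|\le c\|T(0)\|+\tfrac\rho2=:\rho_*$, and hypothesis (3), which says $c\|T(0)\|<\tfrac\rho2$, forces $\rho_*<\rho$. Running the same computation on the smaller ball shows $N$ maps $\bar B_E(0,\rho_*)$ into itself, and there its Lipschitz constant is at most $2cq\rho_*<2cq\rho\le1$; thus $N$ is a genuine contraction of the complete metric space $\bar B_E(0,\rho_*)$, and the Banach fixed-point theorem produces a unique fixed point $x$ there, solving $T(x)=0$. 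Uniqueness in the full ball $B_E(0,\rho)$ then comes for free: any solution $y\in B_E(0,\rho)$ is a fixed point of $N$, hence lies in $N(\bar B_E(0,\rho))\subseteq\bar B_E(0,\rho_*)$, so $y=x$.

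The argument is essentially routine — smoothness of $T$ is not really used beyond making sense of $DT|_0$ and $Q$ — and the only point requiring care is the \textbf{borderline case} $\rho=\tfrac1{2qc}$, where the contraction estimate on $\bar B_E(0,\rho)$ degenerates to Lipschitz constant $\le1$ rather than $<1$. This is exactly why one passes to the strictly smaller ball $\bar B_E(0,\rho_*)$, whose availability is guaranteed by the \emph{strict} inequality in hypothesis (3), to apply the contraction principle, and then recovers uniqueness on the stated ball a posteriori from the fact that $N$ already maps $\bar B_E(0,\rho)$ into $\bar B_E(0,\rho_*)$.
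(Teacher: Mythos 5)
Your proof is correct. The paper itself gives no proof of this lemma --- it is simply quoted from \cite{BM11}, where it is established by exactly the contraction-mapping argument you use: rewrite $T(x)=0$ as the fixed-point equation $x=-(DT|_0)^{-1}(T(0)+Q(x))$ and apply Banach's theorem. Your handling of the borderline case (passing to the strictly smaller ball $\bar B_E(0,\rho_*)$, made possible by the strict inequality in hypothesis (3), and then recovering uniqueness on the full ball because $N$ maps it into $\bar B_E(0,\rho_*)$) is the right way to close the argument, and the estimates $\|N(x)\|\le c\|T(0)\|+cq\|x\|^2$ and $\mathrm{Lip}(N)\le 2cq\rho_*<1$ all check out.
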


\begin{proof}[Proof of Theorem \ref{main}]
  For fixed $0<\delta<1$ and $0<\gamma<2n-2$, consider the family of operators
  \[
T_\epsilon:C_{\epsilon,\gamma,\delta}^{2,\alpha}(M_G)\to C_{\epsilon,\gamma+2,\delta}^{0,\alpha}(M_G).
\]
We will show that for $\epsilon$ sufficiently small, the operator $T_\epsilon$ satisfies the hypotheses of Lemma \ref{inverse-function-theorem}.

By Corollary \ref{coro:T0}, for $\epsilon$ sufficiently small
  \[
  \left\lVert T_\epsilon(0)\right\rVert_{C^{0,\alpha}_{\epsilon,\gamma+2,\delta}(M_G)}
  =\lVert 1-e^{f_\epsilon}\rVert_{C^{0,\alpha}_{\epsilon,\gamma+2,\delta}(M_G)}
  \leq C\epsilon^{(4+\gamma)\frac n{n+1}}.
  \]

  By Corollary \ref{also-valid}, for $\epsilon$ sufficiently small
  \[
L_\epsilon=\tfrac{1}{4}[\Delta_a-  e^{f_a}X ]:C_{\epsilon,\gamma,\delta}^{2,\alpha}(M_G)\to C_{\epsilon,\gamma+2,\delta}^{0,\alpha}(M_G)
  \]
  is an isomorphism, and its inverse is bounded by a constant independent of $\epsilon$.

 By Proposition \ref{Q-bound}, for $\epsilon$ sufficiently small, for all $\Psi$, $\Psi'$ with
  $
  \lVert\Psi\rVert_{C_{\epsilon,\gamma,\delta}^{2,\alpha}},
  \lVert\Psi'\rVert_{C_{\epsilon,\gamma,\delta}^{2,\alpha}}\leq C\epsilon^{\gamma+2}
  $
the map $Q_\epsilon:=T_\epsilon-T_\epsilon(0)-L_\epsilon$ satisfies,
\[
\lVert Q_\epsilon(\Psi)-Q_\epsilon(\Psi')\rVert_{C_{\epsilon,\gamma+2,\delta}^{0,\alpha}}\leq
C\epsilon^{-(\gamma+2)}\lVert\Psi-\Psi'\rVert_{C_{\epsilon,\gamma,\delta}^{2,\alpha}}(\lVert\Psi\rVert_{C_{\epsilon,\gamma,\delta}^{2,\alpha}}+\lVert\Psi'\rVert_{C_{\epsilon,\gamma,\delta}^{2,\alpha}}).
\]
  
Thus if we take $q:=C\epsilon^{-(\gamma+2)}$ and $r_0:=C\epsilon^{\gamma+2}$, then (1) and (2) are satisfied, and moreover
\begin{align*}
  \tfrac{1}{2c}\min(r_0, \tfrac{1}{2qc})&\geq C^{-1}\epsilon^{\gamma+2}\\
  &\geq C^{-1}\epsilon^{-\frac{2n-2-\gamma}{n+1}}\lVert T_a(0)\rVert.
\end{align*}
Since $\gamma<2n-2$, for $\epsilon$ sufficiently small we then have that $ C^{-1}\epsilon^{-\frac{2n-2-\gamma}{n+1}}\geq 1$, and so (3) is satisfied for $\epsilon$ sufficiently small.

Therefore by Lemma \ref{inverse-function-theorem} there exists a solution $\Psi_\epsilon$ to the equation $T_\epsilon(\Psi_\epsilon)=0$, i.e.,
\[
(\omega_\epsilon+\tfrac{i}{2}\ddb \Psi_\epsilon)^n=e^{(X\Psi_\epsilon)/4+f_\epsilon}\Omega\wedge\overline\Omega.
\]
The solution $\Psi_\epsilon$ satisfies, $\left\lVert \Psi_\epsilon\right\rVert_{C^{2,\alpha}_{\epsilon,\gamma,\delta}(M_G)}\leq C\epsilon^{2+\gamma}$.

Since $\Psi\in C_{\epsilon,\gamma,\delta}^{2,\alpha}(M_G)$ is a solution to the equation $T_\epsilon(\Psi)=0$, its differential  $d\Psi$ is a weak solution to a quasilinear elliptic equation
\[
0=dT_\epsilon(\Psi)=\Delta_{\omega_\epsilon+\tfrac{i}{2}\ddb\Psi}d\Psi+F(\nabla^2\Psi,d\Psi).
\]
Bootstrapping using  the Schauder estimate Proposition \ref{schauder-cao}, we conclude that $\Psi\in C_{\epsilon,\gamma,\delta}^{k,\alpha}(M_G)$ for all $k$.  
In particular,  by Lemma \ref{steady-soliton-potential-equations}, $\omega_\epsilon+\frac i2\ddb \Psi$ is a steady K\"ahler-Ricci soliton.
\end{proof}

\bibliography{steady-solitons}{}
\bibliographystyle{alpha}

\sc{Sorbonne Université and École Normale Supérieure}

\sc{Massachusetts Institute of Technology}
\end{document}